    \def\qed{\hfill$\sqcap\kern-8.0pt\hbox{$\sqcup$}$\\}
    \def\re{\textnormal {Re}}
    \def\im{\textnormal {Im}}
    \def\e{{\mathbb E}}
    \def\r{{\mathbb R}}
    \def\c{{\mathbb C}}
    \def\d{{\textnormal d}}
    \def\i{{\textnormal i}}
    \def\G{\mathcal{G}}
    \def\L{\mathcal{L}}
    \def\sep{\hat{\sigma}}
	\newtheorem{theorem}{Theorem}
	\newtheorem{lemma}{Lemma}
	\newtheorem{proposition}{Proposition}
	\theoremstyle{definition}
	\newtheorem{definition}{Definition}
	\newtheorem{remark}{Remark}
	\DeclareMathOperator*{\sign}{sign}
   	\DeclareMathOperator*{\res}{res}
\title{Extending the Meijer  $G$-function} 
\author{ Dmitrii\:Karp}
\address{Department of Mathematics, Holon Institute of Technology, 52 Golomb Street, POB 305, Holon 5810201, Israel}
\email{dmitrika@hit.ac.il}
\author{Alexey \:Kuznetsov}
\address{Department of Mathematics and Statistics, York University,
Toronto, Ontario, M3J 1P3, Canada}
\email[Corresponding author]{akuznets@yorku.ca}
\thanks{Research of A.K. supported by the Natural Sciences and Engineering Research Council of Canada.}
\date{\today}
\begin{document}


\maketitle

\begin{abstract}
By replacing the Euler gamma function by the Barnes double gamma function in the definition of the Meijer $G$-function, we introduce a new family of special functions, which we call $K$-functions. This is a very general class of functions, which includes as special cases Meijer $G$-functions (thus also all hypergeometric functions ${}_p F_q$) as well as several new functions that appeared recently in the literature. Our goal is to define the $K$-function, study its analytic and transformation properties and relate it to several functions that appeared recently in the study of random processes and the fractional Laplacian. We further introduce  a generalization of the Kilbas-Saigo function and show that it is  a special case of $K$-function. 
\end{abstract}
{\vskip 0.25cm}
 \noindent {\it Keywords}: Meijer $G$-function, double gamma function, Mellin-Barnes integral, Mellin transform, Kilbas-Saigo function
{\vskip 0.25cm}
 \noindent {\it 2020 Mathematics Subject Classification }: Primary 33C60, Secondary  33B15


\section{Introduction}\label{section:Intro}


Meijer $G$-function (see \cite{Mathai} and \cite{Jeffrey2007}[Chapter 9.3]) is defined as a Mellin-Barnes type integral 
\begin{equation}\label{def:Meijer_G}
G_{p,q}^{m,n}\Big( 
\begin{matrix}
a_1, \dots, a_p \\  b_1, \dots, b_q
\end{matrix} \Big
\vert z
\Big )=\frac{1}{2\pi \i} \int_{{\mathcal L}} 
\frac{\prod\limits_{j=1}^m \Gamma(b_j-s)\prod\limits_{j=1}^n \Gamma(1-a_j+s)}
	{\prod\limits_{j=m+1}^q \Gamma(1-b_j+s)\prod\limits_{j=n+1}^p \Gamma(a_j-s)} z^{s} \d s.
\end{equation}
Here $m,n,p,q$ are integers such that $0\le m \le q$ and $0\le n \le p$ and $a_j$, $b_j$ are complex numbers such that $a_k-b_j \notin {\mathbb N}$ for $1\le k \le n$ and $1\le j \le m$. The contour ${\mathcal L}$ runs either from $-\i \infty$ to $\i \infty$ or it is a loop beginning and ending at $+\infty$ (or $-\infty$).  In each case the contour ${\mathcal L}$ separates the points 
$$
\{b_j+l, \; 1\le j \le m, \; l\ge 0\},
$$
which are the poles of the functions $\Gamma(b_j-s)$, $j=1,2,\dots,m$
from the points  
$$
\{a_k-l, \; 1\le k \le n, \; l\ge 1\},
$$
which are the poles of $\Gamma(1-a_j+s)$, $j=1,2,\dots,n$. We refer the reader  to  \cite{Mathai} and \cite{Jeffrey2007}[Chapter 9.3] for complete description of the contour ${\mathcal L}$ and various properties of the Meijer $G$-function.  

The Meijer $G$-function unifies and generalizes most of the standard functions of mathematical physics. The family of $G$-functions is very useful and versatile  due to a number of stability properties: it is preserved by taking integrals and derivatives (of integer or fractional order), multiplication by powers, taking reciprocal of the variable, Laplace and Euler transforms and multiplicative convolution. This is the main reason why these functions play an important role in solving the hypergeometric differential equation (of order greater than $2$) \cite{KarpPrilepkina2016} and in fractional calculus \cite{Kiryakova2021}. Meijer's $G$-function is indispensable in probability and statistics: it represents the density of arbitrary products of beta and gamma distributed random variables \cite{CoelhoArnoldBook}, distribution of many likelihood ratio tests \cite{Mathai1984,TangGupta1986}, stationary distribution of certain Markov chains \cite{ChamayouLetac1999,Dufresne2010}.  Moreover, it has been observed that some universality laws in random matrix theory are represented by $G$-function kernels \cite{Kieburgetal2016}.   Some optical transfer functions have been recently shown to be expressible in terms of Meijer's $G$-function \cite{DiazMedina2017}.
When the Meijer $G$-function has a well-defined Mellin transform, multiplying or dividing that Mellin transform by gamma functions and taking the inverse Mellin transform often leads to another Meijer $G$-function. This is the main idea behind results in \cite{Dyda_2017}, where it was shown that the fractional Laplace operator maps certain Meijer $G$-functions into other Meijer $G$-functions.

In the last fifteen years a number of papers 
\cite{Jedidi2018,Kuznetsov2011,Kuznetsov2018,KuznetsovPardo2013,Ostrovsky} appeared in the  literature on probability theory, where certain random variables had densities given by functions defined by Mellin-Barnes integrals similar to \eqref{def:Meijer_G}, except that the gamma functions are replaced by double gamma functions $G(z;\tau)$ or $\Gamma_2(z;a,b)$. The first such example is the density function of the supremum of a stable process, it was studied in detail in  \cite{Hackmann2013,HubKuz2011,Kuznetsov2013} and was found to have very unusual series representation properties.  

Our main goal in this paper is to define an extension of the Meijer $G$-function that would cover all the examples that appeared so far in the literature and to study some basic properties of this new special function. Thus, in Section  \ref{section_Barnes_G} we introduce the Barnes double gamma function $G(z;\tau)$ and present its main properties that will be used later in the paper. In Section \ref{section_def_K_function} we define the extension of the Meijer $G$-function, which we call a $K$-function. In this section we also state analyticity properties of the $K$-function and several transformation identities. In Sections \ref{section_K_asymptotics}  we present results on the asymptotics of $K$-function when $z\to 0$ or $z\to \infty$. Section \ref{section_Mellin_K_function} is devoted to Mellin transform properties of this function and to deriving integral equations satisfied by the $K$-function. In section \ref{section_examples} we give examples of various functions that have already appeared in the literature and show how to express them in terms of $K$-function. In 
Appendix \ref{AppendixA} we  discuss an alternative definition of the $K$-function, where one would use
$\Gamma_2(z;a,b)$  (a symmetric version of the double gamma function) instead of Barnes' version $G(z;\tau)$  and we show that this would lead to essentially the same $K$-function, modulo a parameter change.

\section{Preliminaries: Barnes double gamma function}\label{section_Barnes_G}

Everywhere in this paper we will use the following notation.  We denote by 
    $$
    {\mathbb C}^+ := \{z \in {\mathbb C} \; : \; \im(z)>0 \}, \;\;\;
    {\mathbb C}^- := \{z \in {\mathbb C} \; : \; \im(z)<0 \}
    $$
    the upper and lower complex half-planes. We take the principal branch of the logarithm function  (with a branch cut along the negative half-line). For $z$ on the Riemann surface of the logarithm function we allow $\arg(z) \in (-\infty,\infty)$ and if $z\in {\mathbb C} \setminus (-\infty,0]$ then $\arg(z) \in (-\pi, \pi)$. 

The double gamma function $G(z;\tau)$ was introduced by Alekseevsky in 1888-1889 (see \cite{Neretin2024}) and a decade later it was studied extensively by Barnes \cite{Barnes1899,Barnes1901}. Now it is usually called Barnes (or Alekseevsky-Barnes) double gamma function and we will follow this convention in this paper. The function $G(z;\tau)$ is an entire function of $z$ of order two defined via the Weierstrass infinite product representation: 
\begin{equation}\label{Weierstrass_G}
G(z;\tau)=\frac{z}{\tau} e^{a(\tau)\frac{z}{\tau}+b(\tau)\frac{z^2}{2\tau}} \prod\limits_{m\ge 0} \prod\limits_{n\ge 0} {}^{'}
\left(1+\frac{z}{m\tau+n} \right)e^{-\frac{z}{m\tau+n}+\frac{z^2}{2(m\tau+n)^2}}, \;\;\; 
 |\arg(\tau)|<\pi, \;\;\; z\in \c, 
\end{equation}
where the prime in the second product means that the term corresponding to $m=n=0$ is omitted.
The functions $a(\tau)$ and $b(\tau)$ in \eqref{Weierstrass_G} can be expressed in terms of  {\it the gamma modular forms} $C(\tau)$ and $D(\tau)$
\begin{align*}
C(\tau)&:=\lim\limits_{m\to \infty} 
\bigg[ \sum\limits_{k=1}^{m-1}\psi(k\tau)+\frac12 \psi(m\tau)-\frac{1}{\tau}\ln\left(\frac{\Gamma(m\tau)}{\sqrt{2\pi}}\right) \bigg] , \\
D(\tau)&:=\lim\limits_{m\to +\infty} 
\bigg[ \sum\limits_{k=1}^{m-1}\psi'(k\tau)+\frac12 \psi'(m\tau)-\frac{1}{\tau}\psi(m\tau)\bigg],
\end{align*}
as follows
\begin{equation}\label{def_a_b}
a(\tau):=-\gamma \tau+\frac{\tau}2 \ln(2\pi \tau)+\frac12 \ln(\tau)-\tau C(\tau), \;\; 
b(\tau):=-\frac{\pi^2 \tau^2}{6} -\tau \ln(\tau)-\tau^2 D(\tau), 
\end{equation} where $\psi(z)=\Gamma'(z)/\Gamma(z)$ is the digamma function (see \cite{Jeffrey2007}) and $\gamma=-\psi(1)=0.577215...$ is the Euler-Mascheroni constant. 

Barnes also gave another infinite product representation for $G(z;\tau)$: 
\begin{equation}\label{eq_G2_inf_prod_Gamma}
G(z;\tau)=\frac{1}{\tau \Gamma(z)} e^{\tilde a(\tau) \frac{z}{\tau}+\tilde b(\tau)\frac{z^2}{2\tau^2}} 
\prod\limits_{m\ge 0} \frac{\Gamma(m\tau)}{\Gamma(z+m\tau)} e^{z\psi(m\tau)+\frac{z^2}2 \psi'(m\tau)},
\end{equation}
where the functions $\tilde a(\tau)$ and $\tilde b(\tau)$ are given by
\begin{equation}\label{def_tilde_a_tilde_b}
\tilde a(\tau)=a(\tau)-\gamma \tau, \qquad
\tilde b(\tau)=b(\tau)+\frac{\pi^2 \tau^2}{6}.
\end{equation}

The double gamma function has a number of useful properties. First of all, we have a normalization condition $G(1;\tau)=1$. Second, the double gamma function is quasi-periodic with periods $1$ and $\tau$
\begin{equation}\label{eq:funct_rel_G}
G(z+1;\tau)=\Gamma\left(\frac{z}{\tau}\right) G(z;\tau), \;\;\; G(z+\tau;\tau)=(2\pi)^{\frac{\tau-1}2}\tau^{-z+\frac12}
\Gamma(z) G(z;\tau).
\end{equation} 
It is clear from the Weierstrass product \eqref{Weierstrass_G} that the function $G(z;\tau)$  has zeros on the lattice $m\tau+n$, $m\le 0$, $n\le 0$ and that the zeros are all simple if and only if $\tau$ is not a rational number.  
The double gamma function satisfies other important identities: 
\begin{itemize}
\item[(i)] Modular transformation 
\begin{equation}\label{eq:G_1_over_tau}
G(z;\tau)=(2\pi)^{\frac{z}2 \left(1-\frac1{\tau} \right)} \tau^{\frac{z-z^2}{2\tau}+\frac{z}2-1}  G\left(\frac{z}{\tau};\frac{1}{\tau}\right).
\end{equation}
\item[(ii)] A multiplication formula: for any integers $p,q \in {\mathbb N}$
 \begin{equation}\label{G_z_p/q_formula2}
 G(z;p \tau/q )=
 q^{\frac{1}{2 p \tau}(z-1)(qz-p \tau) }  (2\pi)^{-\frac{q-1}{2}(z-1)}
 \prod\limits_{i=0}^{p-1} \prod\limits_{j=0}^{q-1} 
 \frac{G((z+i)/p+j \tau/q;\tau)}{G((1+i)/p+j \tau/q;\tau)}.
 \end{equation} 
 \item[(iii)]  A product identity 
  \begin{equation}\label{eq:modular}
  G(z;\tau)=\Big(\frac{1+\tau}{\tau}\Big)^{\frac{z^2}{2\tau}-(1+\tau) \frac{z}{2\tau}+1}
  (2\pi)^{-\frac{z}{2\tau}}G(z+1;1+\tau)G(z/\tau;1+1/\tau).
 \end{equation}
\end{itemize}

The proofs of \eqref{eq:G_1_over_tau} and 
\eqref{G_z_p/q_formula2} can be found in  \cite{Barnes1901}. For the proof of (iii) see \cite{AK_2023}.

The double gamma function $G(z;\tau)$ is closely related to the symmetric version of this function, denoted by $\Gamma_2(z;\omega_1,\omega_2)$, see \cite{Barnes1901} (note that this function is denoted by $\Gamma^B$ in \cite{Ruijsenaars2000} and some other papers). We consider this function only for $\omega_1,\omega_2>0$. It is known  that $\Gamma_2(z;\omega_1,\omega_2)$ is symmetric in $\omega_1$ and $\omega_2$ and 
 satisfies the functional identities
\begin{equation}\label{Gamma_2_functional_equations}
\Gamma_2(z+\omega_1;\omega_1,\omega_2)=
\sqrt{2\pi}\frac{\omega_2^{\frac{1}{2}-\frac{z}{\omega_2}}}{\Gamma(
\frac{z}{\omega_2})}
\Gamma_2(z;\omega_1,\omega_2),  \;\;\;
\Gamma_2(z+\omega_2;\omega_1,\omega_2)=
\sqrt{2\pi}\frac{\omega_1^{\frac{1}{2}-\frac{z}{\omega_1}}}{\Gamma(
\frac{z}{\omega_1})}
\Gamma_2(z;\omega_1,\omega_2),
\end{equation}
and the normalization condition 
$\Gamma_2(\omega_1;\omega_1,\omega_2)=\sqrt{2\pi /\omega_2}$. As Barnes established in \textsection 26 in
\cite{Barnes1901}, the function $\Gamma_2(z;\omega_1,\omega_2)$ can be expressed in terms of $G(z;\tau)$ as follows: 
\begin{equation}\label{G_as_Gamma1}
    \Gamma_2(z;\omega_1,\omega_2)=
    (2\pi)^{\frac{z}{2\omega_1}}
    \omega_2^{-\frac{z^2}{2\omega_1\omega_2}+\frac{z(\omega_1+\omega_2)}{2\omega_1\omega_2}-1}
    G\Big(\frac{z}{\omega_1}; \frac{\omega_2}{\omega_1} \Big)^{-1}.
\end{equation}

There exists a Binet type integral representation for the double gamma function, which we state in the next proposition. This representation  is implicitly contained in the results of Barnes 
\cite{Barnes1901}, however, we were not able to locate this formula in this exact form in the existing literature and thus we provide a proof here.

\begin{proposition}
For $\re(z)>0$ and $\re(\tau)>0$ we have
   \begin{align}
 \label{lnG_integral2}
\ln G(z;\tau)&=(a_2(\tau) z^2+a_1(\tau) z + a_0(\tau)) \ln(z)+b_2(\tau) z^2+ b_1(\tau) z + b_0(\tau)
- \int_0^{\infty} e^{-zx} f_3(x) \d x,
 \end{align}
 where 
 $$
 f_3(x):=\frac{1}{x^3} \Big( f(x) - f(0) -f'(0)x - 
 \frac{1}{2} f''(0) x^2 \Big), \;
 {\textnormal{ and }} \;
 f(x):=\frac{x^2}{(1-e^{-x})(1-e^{-\tau x})}. 
 $$
 In the above formula, we have 
  $a_1(\tau)=-(1+\tau)/(2\tau)$, $a_2(\tau)=1/(2\tau)$, $b_2(\tau)=-(3/2+\ln(\tau) )/(2\tau)$ and the explicit expressions for other coefficients can be found in \cite{AK_2023,Bill1997}.
\end{proposition}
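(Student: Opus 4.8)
The plan is to reduce the statement to a computation of the third derivative $\partial_z^3\ln G(z;\tau)$, for which the Weierstrass product \eqref{Weierstrass_G} gives a clean closed form. Taking the logarithm of \eqref{Weierstrass_G} and differentiating three times in $z$, the exponential prefactor $e^{a(\tau)z/\tau+b(\tau)z^2/(2\tau)}$ is a polynomial of degree $\le 2$ in the exponent and so drops out, the leading factor $z/\tau$ contributes $\partial_z^3\ln(z/\tau)=2z^{-3}$, and each Weierstrass factor contributes $\partial_z^3[\ln(1+z/(m\tau+n))-z/(m\tau+n)+z^2/(2(m\tau+n)^2)]=2(z+m\tau+n)^{-3}$. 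First I would justify differentiating the product term by term, the resulting series converging locally uniformly on $\re(z)>0$, obtaining
\[
\partial_z^3\ln G(z;\tau)=2\sum_{m,n\ge0}\frac{1}{(z+m\tau+n)^3}.
\]
Inserting $(z+m\tau+n)^{-3}=\tfrac12\int_0^\infty x^2 e^{-(z+m\tau+n)x}\d x$ and summing the two geometric series $\sum_{n\ge0}e^{-nx}$ and $\sum_{m\ge0}e^{-m\tau x}$ then turns this into the Laplace transform
\[
\partial_z^3\ln G(z;\tau)=\int_0^\infty e^{-zx}\,\frac{x^2}{(1-e^{-x})(1-e^{-\tau x})}\,\d x=\int_0^\infty e^{-zx}f(x)\,\d x,
\]
the interchange of sum and integral being justified by positivity (Tonelli) for real $z>0$, $\tau>0$ and then extended to $\re(z)>0$, $\re(\tau)>0$ by analyticity.

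Next I would split off the quadratic Taylor jet of $f$ at the origin. Writing $f(x)=f(0)+f'(0)x+\tfrac12 f''(0)x^2+x^3f_3(x)$ with $f_3$ as in the statement, and using $\int_0^\infty x^k e^{-zx}\d x=k!\,z^{-k-1}$, the three polynomial terms produce $f(0)/z+f'(0)/z^2+f''(0)/z^3$, while the remainder is exactly $\int_0^\infty x^3 e^{-zx}f_3(x)\d x=\partial_z^3\big(-\int_0^\infty e^{-zx}f_3(x)\d x\big)$. Comparing with the elementary identities $\partial_z^3(\tfrac12 z^2\ln z)=1/z$, $\partial_z^3(-z\ln z)=1/z^2$ and $\partial_z^3(\tfrac12\ln z)=1/z^3$ shows that $\ln G(z;\tau)$ and the function $(a_2z^2+a_1z+a_0)\ln z-\int_0^\infty e^{-zx}f_3(x)\d x$ have the same third derivative once we set $a_2=f(0)/2$, $a_1=-f'(0)$, $a_0=f''(0)/2$. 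A short Bernoulli-number expansion of $f$, using $x/(1-e^{-x})=1+x/2+x^2/12+\cdots$, gives $f(0)=1/\tau$ and $f'(0)=(1+\tau)/(2\tau)$, reproducing the stated $a_2=1/(2\tau)$ and $a_1=-(1+\tau)/(2\tau)$, with $a_0$ read off from $f''(0)$ in the same way.

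Finally, since both functions are analytic on the simply connected half-plane $\re(z)>0$ and share a third derivative, their difference is a polynomial $b_2z^2+b_1z+b_0$, which yields the claimed formula. The remaining and genuinely delicate step is to identify these constants. Here I would compare the two sides as $z\to+\infty$: the integral $\int_0^\infty e^{-zx}f_3(x)\d x$ tends to $0$, so the quadratic together with the $z^2\ln z$, $z\ln z$, $\ln z$ terms must reproduce the classical large-$z$ asymptotic expansion of $\ln G(z;\tau)$ recorded in \cite{AK_2023,Bill1997}; matching the non-logarithmic quadratic part fixes $b_2,b_1,b_0$ and in particular produces the $\ln\tau$ appearing in $b_2=-(3/2+\ln\tau)/(2\tau)$. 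I expect this matching with the known asymptotics, rather than the differentiation, to be the main obstacle, both because it requires the precise form of Barnes' expansion and because $b_0$ and the subtle $\ln\tau$ contribution to $b_2$ are exactly the pieces the third-derivative argument cannot see. An alternative route to the constants, avoiding the full asymptotic expansion, is to impose the quasi-periodicity relation $G(z+1;\tau)=\Gamma(z/\tau)G(z;\tau)$ from \eqref{eq:funct_rel_G} together with the normalization $G(1;\tau)=1$ on the candidate formula, using the classical Binet representation of $\ln\Gamma$ to evaluate the resulting integrals, which supplies enough linear equations to solve for $b_2,b_1,b_0$.
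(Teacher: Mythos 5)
Your argument is correct, and it takes a genuinely different route from the paper. The paper does not derive the integral representation at all: it imports the Laplace-type formula for $\ln\Gamma_2(z;1,\tau)$ directly from Ruijsenaars' formulas (3.13), (3.18), (3.19), converts it to $G(z;\tau)$ via \eqref{G_as_Gamma1}, and then determines $a_1,a_2,b_2$ by applying Watson's Lemma to show the integral is $O(1/z)$ and matching against the Billingham--King asymptotic expansion; the case $\re(\tau)>0$ is then reached by analytic continuation in $\tau$, exactly as you propose. Your derivation instead builds the representation from scratch out of the Weierstrass product \eqref{Weierstrass_G}: the third logarithmic derivative kills the quadratic exponential prefactor and the Weierstrass counterterms, giving $\partial_z^3\ln G(z;\tau)=2\sum_{m,n\ge0}(z+m\tau+n)^{-3}$, which becomes the Laplace transform of $f$ after summing the geometric series (a classical Malmsten/Binet-type computation). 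This buys you two things the paper's proof does not: it is self-contained rather than resting on Ruijsenaars' formulas, and it identifies $a_2=f(0)/2$, $a_1=-f'(0)$, $a_0=f''(0)/2$ purely from the Taylor jet of $f$ at the origin (your values $f(0)=1/\tau$ and $f'(0)=(1+\tau)/(2\tau)$ check out against the stated $a_2,a_1$), so that only $b_0,b_1,b_2$ require matching against the known large-$z$ expansion. That last step is unavoidable in both approaches and is where the $\ln\tau$ in $b_2$ enters; your fallback via quasi-periodicity $G(z+1;\tau)=\Gamma(z/\tau)G(z;\tau)$ plus Binet's formula for $\ln\Gamma$ is a legitimate alternative, though slightly more laborious. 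The only points you should make explicit in a final write-up are the justification of termwise triple differentiation of the double product (locally uniform convergence of $\sum(z+m\tau+n)^{-3}$ on $\re(z)>0$) and the integrability of $e^{-zx}f_3(x)$ at infinity, which follows since $f_3(x)=O(1/x)$ as $x\to+\infty$ for $\re(\tau)>0$.
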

\begin{proof}
Assume first that $\tau>0$. Formulas (3.13), (3.18), (3.19) in \cite{Ruijsenaars2000} with $N=M=2$ and $a_1=1$, $a_2=\tau$  imply that (note that our $\Gamma_2$ is $\Gamma_2^B$ in \cite{Ruijsenaars2000}, up to a multiplicative constant)
for $\re(z)>0$
$$
\ln \Gamma_2(z;1,\tau)=\tilde P(z;\tau) \ln(z)+ \tilde Q(z;\tau)+ \int_0^{\infty} e^{-zx} f_3(x) \d x,
$$
where $\tilde P(z;\tau)$ and $\tilde Q(z;\tau)$ are polynomials of degree two in $z$-variable (with coefficients depending on $\tau$).  Using the above result and formula \eqref{G_as_Gamma1}  we conclude that for $\re(z)>0$
\begin{equation}\label{lnG_integral2b}
\ln G(z;\tau)= P(z;\tau) \ln(z)+ Q(z;\tau) - \int_0^{\infty} e^{-zx} f_3(x) \d x,
\end{equation}
for some polynomials 
\begin{align*}
     P(z;\tau)&=a_2(\tau) z^2 + a_1(\tau) z + a_0(\tau), \\
     Q(z;\tau)&=b_2(\tau) z^2 + b_1(\tau) z + b_0(\tau).
\end{align*}
Using the fact that $f_3(x)$ is analytic in the neighbourhood of $x=0$ and applying Watson's Lemma, we conclude that 
$$
 \int_0^{\infty} e^{-zx} f_3(x) \d x=O(1/z)
$$
as $z\to +\infty$ along positive half-line $z\in (0,\infty)$. Thus formula \eqref{lnG_integral2b} gives an asymptotic representation of $\ln G(z;\tau)$ as $z\to +\infty$, and comparing the coefficients of this asymptotic expansion with those obtained by Billingham and King in \cite{Bill1997}, we find the values of the coefficients $a_1(\tau)$, $a_2(\tau)$ and $b_2(\tau)$. This ends the proof of \eqref{lnG_integral2} for $\tau>0$ and the general case when $\re(\tau)>0$ follows by analytic continuation. 
\end{proof}

We will also require the following asymptotic expansion of $\ln G(z;\tau)$. 
\begin{proposition}\label{prop_asymptotics_G}
For any fixed $\tau>0$, we have
\begin{equation}\label{eqn_lnG_E}
\ln G(z;\tau)=\Big(\frac{1}{2\tau} z^2- \frac{1+\tau}{2\tau} z +a_0(\tau)\Big) \ln(z)-\frac{1}{2\tau} \Big( \frac{3}{2}+\ln(\tau) \Big) z^2+ b_1(\tau) z +b_0(\tau)
+ O(|z|^{-1}),
\end{equation}
as $z \to \infty$, uniformly in any sector $|\arg(z)|<\pi-\epsilon<\pi$.
\end{proposition}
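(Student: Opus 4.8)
The plan is to read off \eqref{eqn_lnG_E} directly from the Binet-type representation \eqref{lnG_integral2}: once the explicit values $a_2(\tau)=1/(2\tau)$, $a_1(\tau)=-(1+\tau)/(2\tau)$ and $b_2(\tau)=-(3/2+\ln(\tau))/(2\tau)$ are substituted, the polynomial part of \eqref{lnG_integral2} becomes verbatim the polynomial part of \eqref{eqn_lnG_E}. Hence the entire content of the proposition reduces to the single estimate
\[
I(z):=\int_0^{\infty} e^{-zx} f_3(x)\,\d x = O(|z|^{-1}), \qquad z\to\infty,
\]
uniformly for $|\arg(z)|<\pi-\epsilon$. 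The delicate point is that \eqref{lnG_integral2} is established only for $\re(z)>0$, whereas now I must reach the full sector $|\arg(z)|<\pi-\epsilon$ and, moreover, make the error term uniform.

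First I would record the analytic behaviour of the integrand. Since $\tau>0$, the only singularities of $f(x)=x^2/((1-e^{-x})(1-e^{-\tau x}))$ are poles on the imaginary axis at $2\pi\i k$ and $2\pi\i k/\tau$, $k\in{\mathbb Z}\setminus\{0\}$; the singularity at $x=0$ is removable, so after subtracting the degree-two Taylor polynomial the function $f_3$ is analytic in the right half-plane $|\arg(x)|<\pi/2$. Moreover $f(x)=x^2+O(|x|^2 e^{-c\re(x)})$ as $\re(x)\to+\infty$ with $c=\min(1,\tau)$, so $f_3(x)=O(1/|x|)$ there; consequently, in every closed subsector $|\arg(x)|\le\pi/2-\eta$ the function $f_3$ is continuous, bounded and tends to $0$ at infinity. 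I denote by $M_\eta$ its supremum on this subsector.

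Next I would analytically continue $I(z)$ by rotating the contour. For $\phi\in(-\pi/2,\pi/2)$ put $I_\phi(z):=\int_0^{\infty} e^{-z r e^{\i\phi}} f_3(re^{\i\phi})\,e^{\i\phi}\,\d r$, which converges absolutely whenever $|\arg(z)+\phi|<\pi/2$, thanks to the boundedness of $f_3$ near $0$ and its decay at infinity. Two such integrals agree on the overlap of their domains by Cauchy's theorem: the sector swept between the two rays lies in $|\arg(x)|<\pi/2$ and hence contains no pole of $f_3$, while $\re(ze^{\i\psi})>0$ throughout that sector forces the arcs at the origin and at infinity to contribute nothing. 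As $\phi$ ranges over $(-\pi/2,\pi/2)$ these pieces glue into a single analytic function on $\{|\arg(z)|<\pi\}$ that coincides with $I(z)$ on $\re(z)>0$. Since the zeros of $G(z;\tau)$ lie on $(-\infty,0]$, the function $\ln G(z;\tau)$ is analytic and single-valued on the simply connected sector $|\arg(z)|<\pi$, so by analytic continuation \eqref{lnG_integral2} remains valid there.

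Finally comes the uniform estimate, which I expect to be the main obstacle, because the exponential decay of $e^{-zx}$ and the boundedness of $f_3$ must be kept under control simultaneously as $\arg(z)$ approaches $\pm\pi$. Given $z$ with $|\arg(z)|<\pi-\epsilon$, I would choose the rotation angle $\phi=\phi(z)$ so that both $|\arg(z)+\phi|\le\pi/2-\epsilon/3$ and $|\phi|\le\pi/2-\epsilon/3$; an elementary check shows this is possible precisely when $|\arg(z)|<\pi-\epsilon$. The first inequality yields $\re(ze^{\i\phi})=|z|\cos(\arg(z)+\phi)\ge|z|\sin(\epsilon/3)$, and the second keeps the ray inside the subsector $|\arg(x)|\le\pi/2-\epsilon/3$, where $|f_3|\le M_{\epsilon/3}$. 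Therefore
\[
|I(z)|\le M_{\epsilon/3}\int_0^{\infty} e^{-|z|\sin(\epsilon/3)\,r}\,\d r=\frac{M_{\epsilon/3}}{|z|\sin(\epsilon/3)},
\]
which is $O(|z|^{-1})$ with a constant depending only on $\epsilon$ and $\tau$. Together with the substitution of the coefficients into \eqref{lnG_integral2}, this establishes \eqref{eqn_lnG_E}.
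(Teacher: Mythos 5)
Your proof is correct and follows essentially the same route as the paper: read the polynomial part off the Binet-type formula \eqref{lnG_integral2} and establish the uniform $O(|z|^{-1})$ bound on $\int_0^\infty e^{-zx}f_3(x)\,\d x$ by rotating the ray of integration, using analyticity and boundedness of $f_3$ in closed subsectors of $|\arg(x)|<\pi/2$. In fact you supply more detail than the paper's sketch (the $z$-dependent choice of rotation angle, the gluing of the rotated integrals into one analytic continuation, and the explicit $M_{\epsilon/3}/(|z|\sin(\epsilon/3))$ bound), all of which checks out.
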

This asymptotic result follows from the asymptotic formula for $\Gamma_2(z;a,b)$, derived by Barnes in \cite{Barnes1901}. 
This asymptotic relation was rediscovered by Billingham and King in \cite{Bill1997} (see also \cite{AK_2023}, where explicit formula for all terms in the complete asymptotic expansion is given). Neither Barnes nor Billingham and King state explicitly that the asymptotic relation \eqref{eqn_lnG_E} holds uniformly in the sector $|\arg(z)|<\pi-\epsilon<\pi$, though this fact does follow implicitly from Barnes' proof of \eqref{eqn_lnG_E}. The proof that \eqref{eqn_lnG_E} is uniform in the given sector can be obtained fairly quickly from formula \eqref{lnG_integral2}. We will only sketch the main steps here. We fix $\theta \in (-\pi/2,\pi/2)$ and rotate the contour of integration in the integral on the right-hand side of \eqref{lnG_integral2}, so that after a change of variable $x=e^{\i \theta} y$ we obtain 
\begin{equation}\label{f3_integral}
\int_0^{\infty} e^{-zx} f_3(x) \d x=
e^{\i \theta} \int_0^{\infty} e^{-e^{\i \theta} zy} f_3(e^{\i \theta} y) \d y.
\end{equation}
The rotation of the contour of integration is justified, since for $\tau>0$ the function $f_3(x)$ is analytic and bounded in any sector $|\arg(x)|<\pi/2-\epsilon$. 
It is clear that the integral on the right-hand side of 
\eqref{f3_integral} is $O(1/z)$ as $z\to \infty$, uniformly in any sector $|\theta+\arg(z)|<\pi/2-\epsilon$. Since $\theta$ can be an arbitrary number in the interval $(-\pi/2,\pi/2)$, we obtain that the desired result \eqref{eqn_lnG_E} is uniform in $|\arg(z)|<\pi-\epsilon$.

\section{Definition of the Meijer-Barnes $K$ function}\label{section_def_K_function}

We assume that $m,n,p,q$ are non-negative integers and denote ${\bf a}=(a_1,\dots, a_p) \in {\mathbb C}^p$ and 
${\bf b}=(b_1,\dots, b_q) \in {\mathbb C}^q$. 
If ${\bf a} \in {\mathbb C}^p$ is a vector and $c$ is a complex number, we will denote by $c  {\bf a}$ and $c+{\bf a}$ vectors of length $p$ having elements $ca_i$ and $c+a_i$, respectively. 
For $s\in {\mathbb C}$ and $\tau \in \c \setminus (-\infty,0]$  we define
\begin{equation}\label{eq:integrand}
\G_{p,q}^{m,n}(s)=\G_{p,q}^{m,n}\Big(\begin{matrix} {\bf a} \\
{\bf b} \end{matrix}
\Big\vert s; \tau\Big):=\frac{\prod\limits_{j=1}^m G(b_j-s;\tau)\prod\limits_{j=1}^n G(1+\tau-a_j+s;\tau)}
	{\prod\limits_{j=m+1}^q G(1+\tau-b_j+s;\tau)\prod\limits_{j=n+1}^p G(a_j-s;\tau)}.     
\end{equation}
This function is an analogue of the product/ratio of gamma functions in \eqref{def:Meijer_G}. We define the two sets 
\begin{align}\label{def:Lambda_pm}
\Lambda^+:&=\{a_j+l \tau + k \; : \; n+1\le j \le p, \; l \ge 0, \; k\ge 0\} , \\
\nonumber
\Lambda^-:&=\{b_j-l \tau - k \; : \; m+1\le j \le q, \; l \ge 1, \; k\ge 1\}. 
\end{align}
From the properties of the double gamma function, it is clear that $\G_{p,q}^{m,n}(s)$ is a meromorphic function having poles precisely at the points in $\Lambda^+ \cup \Lambda^-$.

Next we introduce the parameters: 
\begin{equation}\label{eq:parameters}
\begin{split}
N&:=2(m+n)-p-q, \\ 
\nu&:=\sum_{j=1}^p a_j-\sum_{j=1}^q b_j, \\ 
\mu&:=\sum\limits_{j=1}^n a_j - \sum\limits_{j=n+1}^p a_j+
\sum\limits_{j=1}^m b_j-\sum\limits_{j=m+1}^q  b_j,\\ 
\xi&:=\sum_{j=1}^n a_j^2-\sum_{j=n+1}^p a_j^2+\sum_{j=1}^m b_j^2-
\sum_{j=m+1}^q b_j^2.
\end{split}
\end{equation}
In the following proposition we collect several transformation properties of $\G_{p,q}^{m,n}(s)$. 

\begin{proposition}
For $s\in {\mathbb C}$, $\tau \in \c \setminus (-\infty,0]$ and $c\in {\mathbb C}$ the following identities hold: 
\begin{equation}\label{eq:G_identity1}
\G_{q,p}^{n,m}\Big(\begin{matrix} 1+\tau-{\bf b} \\
1+\tau-{\bf a} \end{matrix}
\Big\vert -s ;\tau\Big)=\G_{p,q}^{m,n}\Big(\begin{matrix} {\bf a} \\
{\bf b} \end{matrix}
\Big\vert s ; \tau\Big)=\G_{p,q}^{m,n}\Big(\begin{matrix} c+ {\bf a} \\
c+{\bf b} \end{matrix}
\Big\vert c+s; \tau\Big),
\end{equation}
and
\begin{equation}\label{eq:G_s_over_tau_transformation}
\G_{p,q}^{m,n}\Big(\begin{matrix} {\bf a} \\
{\bf b} \end{matrix}
\Big\vert s; \tau\Big)
=A \times B^{-s} \times \tau^{-\frac{N s^2}{2\tau}} \times \G_{p,q}^{m,n}\Big(\begin{matrix} \tau^{-1} {\bf a} \\
\tau^{-1} {\bf b} \end{matrix}
\Big\vert s\tau^{-1}; \tau^{-1}\Big),
\end{equation}
where 
\begin{equation}\label{def_A_B}
A:=(2\pi)^{\frac{1-\tau}{2\tau}(\nu-(1+\tau)(n+m-q))}
\tau^{\frac{1}{2\tau} ((1+\tau)\mu-\xi)-N}, \;\;\; 
B:=(2\pi)^{\frac{1-\tau}{2\tau}(p-q)} \tau^{\frac{1}{2\tau}(N(1+\tau)-2\mu)}.
\end{equation}
\end{proposition}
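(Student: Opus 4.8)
The plan is to treat the two displays separately, since \eqref{eq:G_identity1} is pure bookkeeping while \eqref{eq:G_s_over_tau_transformation} rests on the modular transformation of the double gamma function. For the second equality in \eqref{eq:G_identity1} I would simply substitute $\aa \mapsto c+\aa$, $\bb\mapsto c+\bb$, $s\mapsto c+s$ into \eqref{eq:integrand}: every argument of every $G$-factor is of the form $b_j-s$, $1+\tau-a_j+s$, etc., so each shift by $c$ cancels and the integrand is literally unchanged. For the first equality I would observe that passing to $\G_{q,p}^{n,m}$ interchanges the roles $(m,n,p,q)\mapsto(n,m,q,p)$, and that with the data $\aa'=1+\tau-\bb$, $\bb'=1+\tau-\aa$ and argument $-s$ each of the four products of the left-hand side maps termwise onto one of the four products on the right: for instance $G(b'_j+s;\tau)=G(1+\tau-a_j+s;\tau)$ reproduces the second numerator product and $G(1+\tau-a'_j-s;\tau)=G(b_j-s;\tau)$ reproduces the first. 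No property of $G$ beyond its occurrence in \eqref{eq:integrand} is required here.

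For \eqref{eq:G_s_over_tau_transformation} I would apply the modular transformation \eqref{eq:G_1_over_tau}, in the form $G(w;\tau)=(2\pi)^{\frac w2(1-1/\tau)}\tau^{\frac{w-w^2}{2\tau}+\frac w2-1}G(w/\tau;1/\tau)$, to each of the $m+n$ numerator factors and each of the $p+q-m-n$ denominator factors of \eqref{eq:integrand}. For every factor the new argument $w/\tau$ is exactly the corresponding argument of $\G_{p,q}^{m,n}(s\tau^{-1};\tau^{-1})$ built from the data $\tau^{-1}\aa$, $\tau^{-1}\bb$; for example the numerator argument $1+\tau-a_j+s$ becomes $(1+\tau-a_j+s)/\tau=1+\tau^{-1}-\tau^{-1}a_j+s\tau^{-1}$, and similarly for the three remaining families. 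Hence the transformed factors $G(\cdot/\tau;1/\tau)$ reassemble precisely into $\G_{p,q}^{m,n}(s\tau^{-1};\tau^{-1})$, and it only remains to collect the accumulated powers of $2\pi$ and of $\tau$.

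The heart of the computation is then to evaluate two signed sums, in each case the numerator total minus the denominator total: the sum of the linear exponents $\tfrac w2(1-1/\tau)$ (giving the $2\pi$-content) and the sum of the exponents $\tfrac{w-w^2}{2\tau}+\tfrac w2-1$ (giving the $\tau$-content), as $w$ ranges over $b_j-s$, $1+\tau-a_j+s$ for the numerator and $1+\tau-b_j+s$, $a_j-s$ for the denominator. Both are polynomials in $s$ of degree at most two, and I would match them coefficient by coefficient against the $2\pi$-content of $A\,B^{-s}$ and the $\tau$-content of $A\,B^{-s}\,\tau^{-Ns^2/(2\tau)}$. The $2\pi$-exponents split cleanly: the $s$-independent part reproduces the exponent in $A$ and the $s$-linear part reproduces that in $B^{-s}$, both following from the definition of $\nu$. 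The $s^2$-part of the $\tau$-content reproduces $\tau^{-Ns^2/(2\tau)}$ because the quadratic sum $\sum_{\pm}\pm w^2$ has leading coefficient $N$, and the $s^1$-part matches once $N=2(m+n)-p-q$ is substituted.

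The step I expect to be the main obstacle is the constant $\tau$-exponent. There the partial sums $\sum_{j=m+1}^q b_j$ and $\sum_{j=1}^n a_j$ appear individually in $\sum_{\pm}\pm w^2$ and do not obviously assemble into the symmetric quantities $\mu,\nu,\xi$; the $a_j^2,b_j^2$ terms collapse into $\xi$ immediately, but the linear partial sums reorganize correctly only after combining the contributions of the $\tfrac w2$ and $\tfrac{w-w^2}{2\tau}$ pieces and invoking the identity $\mu=-\nu-2\big(\sum_{j=m+1}^q b_j-\sum_{j=1}^n a_j\big)$, which is immediate from \eqref{eq:parameters}. Modulo this bookkeeping-heavy verification the argument is entirely routine, and the general case $\tau\in\c\setminus(-\infty,0]$ then follows from the $\tau>0$ case by analytic continuation in $\tau$.
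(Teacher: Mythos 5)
Your proposal is correct and follows essentially the same route as the paper: the identities in \eqref{eq:G_identity1} are verified by direct substitution into \eqref{eq:integrand}, and \eqref{eq:G_s_over_tau_transformation} is obtained by applying the modular transformation \eqref{eq:G_1_over_tau} to each double gamma factor and then simplifying the signed sums of the linear and quadratic exponents (the paper's Appendix~\ref{AppendixB} calls these $D_1$ and $D_2$) into the quantities $N$, $\mu$, $\nu$, $\xi$. The identity $\mu+\nu=2\big(\sum_{j=1}^n a_j-\sum_{j=m+1}^q b_j\big)$ that you single out as the key bookkeeping step is exactly the one the paper uses to close the computation of the constant term.
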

\begin{proof}
    The two identities in \eqref{eq:G_identity1} follow immediately from
    \eqref{eq:integrand}. The proof of \eqref{eq:G_s_over_tau_transformation} follows from modular transformation \eqref{eq:G_1_over_tau}
    and can be found in Appendix \ref{AppendixB}.
\end{proof}

For $\epsilon \in (0,\pi)$ we introduce two sectors
$$
{\mathcal S}^{\pm}_{\epsilon}: = \{ z\in {\mathbb C} \; : \; z\neq 0, 
\; \epsilon \le \pm \arg(z) \le \pi - \epsilon \}. 
$$
The next proposition states the asymptotic behavior of $\G_{p,q}^{m,n}\Big(\begin{matrix} {\bf a} \\
{\bf b} \end{matrix}
\Big\vert s ; \tau\Big)$ as $s\to \infty$ in sectors ${\mathcal S}^{\pm}_{\epsilon}$. 

\begin{proposition}\label{prop:phi-asymp}
Let $\epsilon \in (0,\pi)$ and $\tau>0$. Denote $\chi=\sign (\im(s))$. Then as $s\to \infty$ in sectors ${\mathcal S}^{\pm}_{\epsilon}$ we have
\begin{itemize}
\item[(i)]
if $N\neq 0$, then 
\begin{equation}\label{eq:phi_asymptotics_case1}
\G_{p,q}^{m,n}\Big(\begin{matrix} {\bf a} \\
{\bf b} \end{matrix}
\Big\vert s ; \tau\Big)=\exp\Big(\frac{N}{2\tau} s^2 \ln(s)+O(|s|^2)\Big);
\end{equation}
\item[(ii)]
if $N=0$,  we have 
\begin{equation}\label{eq:phi_asymptotics_case2}
\G_{p,q}^{m,n}\Big(\begin{matrix} {\bf a} \\
{\bf b} \end{matrix}
\Big\vert s ; \tau\Big)=
\exp\Big(\frac{\pi \i \chi}{ 4\tau} (p-q) s^2 -\frac{1}{\tau}\mu s\ln(s) +O(|s|)\Big);
\end{equation}
\item[(iii)]
if $N=p-q=0$,  we have 
\begin{equation}\label{eq:phi_asymptotics_case3}
\G_{p,q}^{m,n}\Big(\begin{matrix} {\bf a} \\
{\bf b} \end{matrix}
\Big\vert s ; \tau\Big)=
\exp\Big(-\frac{1}{\tau}\mu s\ln(s)+ 
\frac{1}{\tau}(1+\ln(\tau)) \mu s  + \frac{\pi \i \chi}{2 \tau} (\mu-\nu)  s  + O(\ln |s|)\Big);
\end{equation}
\item[(iv)] If $N=p-q=\mu=\nu=0$, then 
\begin{equation}\label{eq:phi_asymptotics_case4}
\G_{p,q}^{m,n}\Big(\begin{matrix} {\bf a} \\
{\bf b} \end{matrix}
\Big\vert s ; \tau\Big)=\exp\Big(\frac{1}{2\tau} \xi \ln |s| +O(1)\Big).
\end{equation}
\end{itemize}
All these asymptotic results hold uniformly as $s \to \infty$ in the corresponding sectors. 
\end{proposition}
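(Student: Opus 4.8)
The plan is to pass to logarithms in \eqref{eq:integrand} and to feed the uniform expansion \eqref{eqn_lnG_E} of $\ln G(z;\tau)$ into each of the $p+q$ factors. With the principal branch,
\begin{equation*}
\begin{split}
\ln \G_{p,q}^{m,n}(s)&=\sum_{j=1}^m \ln G(b_j-s;\tau)+\sum_{j=1}^n \ln G(1+\tau-a_j+s;\tau)\\
&\qquad-\sum_{j=m+1}^q \ln G(1+\tau-b_j+s;\tau)-\sum_{j=n+1}^p \ln G(a_j-s;\tau),
\end{split}
\end{equation*}
so every argument is an affine shift either of $+s$, for the $n+(q-m)$ factors $G(1+\tau-a_j+s;\tau)$ and $G(1+\tau-b_j+s;\tau)$, or of $-s$, for the $m+(p-n)$ factors $G(b_j-s;\tau)$ and $G(a_j-s;\tau)$. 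As $s\to\infty$ in ${\mathcal S}^{\pm}_{\epsilon}$ the shifts of $+s$ have $\arg\to\arg(s)\in[\epsilon,\pi-\epsilon]$, while the shifts of $-s$ have $\arg(-s)=\arg(s)\mp\pi$, landing in a sector with $|\arg|\le\pi-\epsilon$ as well; hence \eqref{eqn_lnG_E} applies to every factor, uniformly.

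The delicate point is the branch of the logarithm inside \eqref{eqn_lnG_E} for the factors whose argument behaves like $-s$: with $\chi=\sign(\im(s))$ one has $\ln(-s)=\ln(s)-\pi\i\chi$, and it is exactly this shift that generates the $\chi$-dependent terms in (ii)--(iv). I would substitute $z=w+c$ with $w=\pm s$ and $c$ constant into \eqref{eqn_lnG_E}, expand $\ln(w+c)=\ln w+c/w+O(w^{-2})$ and the pieces $z^2$, $z^2\ln z$, $z\ln z$ accordingly, and organize the result as a finite combination of the monomials $s^2\ln s,\ s^2,\ s\ln s,\ s,\ \ln s,\ 1$, with $\ln s$ replaced by $\ln s-\pi\i\chi$ in every $-s$ factor. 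Summing the leading contributions, the coefficient of $s^2\ln s$ is $\tfrac{1}{2\tau}\big(m+n-(q-m)-(p-n)\big)=\tfrac{N}{2\tau}$ and everything else is $O(|s|^2)$; this is case (i).

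The degenerate cases are pure bookkeeping. For $N=0$ the $s^2\ln s$ terms cancel, the $s^2$ term then comes solely from the $-\pi\i\chi$ parts of the $-s$ factors and equals $\tfrac{\pi\i\chi}{4\tau}(p-q)s^2$ after substituting $m+n=(p+q)/2$, and the coefficient of $s\ln s$ collapses (for all parameters) to $\tfrac{(1+\tau)N}{2\tau}-\tfrac{\mu}{\tau}=-\tfrac{\mu}{\tau}$ by the definition of $\mu$ in \eqref{eq:parameters}; this gives (ii). Adding $p=q$ annihilates the $s^2$ term, and I would then read off the coefficient of $s$: its $\ln s$-free real part reduces to $\tfrac{1}{\tau}(1+\ln(\tau))\mu$ (the $b_1(\tau)$-contribution being weighted by $p-q=0$), while the $\pi\i\chi$-part, arising from the replacement $\ln s\mapsto\ln s-\pi\i\chi$ in the order-$s$ logarithmic terms of the $-s$ factors, reduces to $\tfrac{\mu-\nu}{2\tau}$; this is (iii). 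Finally, under $N=p-q=\mu=\nu=0$ all terms down to order $s$ vanish and the coefficient of $\ln s$ equals $\tfrac{\xi}{2\tau}$, giving (iv); here $\ln s$ may be replaced by $\ln|s|$ since $\ln s-\ln|s|=\i\arg(s)=O(1)$.

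The genuine obstacle is algebraic rather than conceptual: one must check that the accumulated coefficients collapse to the compact quantities $N,\mu,\nu,\xi$ of \eqref{eq:parameters} precisely under the stated constraints. The decisive identity is $\mu+\nu=2\big(\sum_{j=1}^n a_j-\sum_{j=m+1}^q b_j\big)$, so that $\mu=\nu=0$ forces $\sum_{j=1}^n a_j=\sum_{j=m+1}^q b_j$; this removes a stray $(1+\tau)$-multiple from the $\ln s$-coefficient in (iv). In that coefficient the $(1+\tau)^2$-weighted sums cancel identically between the $z^2\ln z$ and the $z\ln z$ contributions, the $a_0(\tau)$-terms vanish by $N=0$, and the remaining $(1+\tau)$-multiple of $\nu$ vanishes by $\nu=0$, leaving exactly $\tfrac{\xi}{2\tau}$. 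I would verify all of this by expanding $(1+\tau-a_j)^2$ and $(1+\tau-b_j)^2$ and regrouping, the pure squares assembling into $\xi$. Uniformity is inherited directly from that of \eqref{eqn_lnG_E}, together with the fact, established in the first step, that every argument stays in a fixed subsector bounded away from the branch cut.
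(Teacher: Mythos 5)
Your proposal is correct and follows essentially the same route as the paper's proof: both feed the uniform expansion \eqref{eqn_lnG_E} into each of the $p+q$ double gamma factors, use $\ln(-s)=\ln(s)-\pi\i\chi$ to track the branch for the reflected arguments, and collect the coefficients of $s^2\ln s,\ s^2,\ s\ln s,\ s,\ \ln s$, which collapse to the quantities $N,\mu,\nu,\xi$ exactly as you describe (your identities $\tfrac{(1+\tau)N}{2\tau}-\tfrac{\mu}{\tau}$ for the $s\ln s$ coefficient and $\mu+\nu=2\big(\sum_{j=1}^n a_j-\sum_{j=m+1}^q b_j\big)$ all check out against the paper's term-by-term count). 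The only cosmetic difference is that you carry both signs of $\chi$ at once and spell out case (iv), which the paper treats for $\chi=+1$ and omits, respectively.
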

\begin{proof}
We denote the coefficients of the asymptotic expansion in \eqref{eqn_lnG_E} by $A_1=1/(2\tau)$, $A_2=-(3/2+\ln(\tau))/(2\tau)$, $A_3=-(1+\tau)/(2\tau)$, $A_4=a_0(\tau)$ and $A_5=b_1(\tau)$. Using the asymptotic expansion \eqref{eqn_lnG_E} and the following result
$$
\ln(c+s)=\ln(s)+c/s+O(s^{-2}), \;\;\; s\to \infty, 
$$
we find that 
\begin{align}
\label{eq:G(c+z)_asymptotics}
\ln (G(c+s;\tau))&=A_1s^2\ln(s)+A_2s^2+(A_3+2cA_1)s\ln(s)\\
\nonumber
&+(A_5+c(A_1+2A_2))s+ (A_1c^2+A_3c+A_4)\ln (s)+O(1), 
\end{align}
as $s\to \infty$ staying in ${\mathcal S}^+_{\epsilon}$. We recall that we consider the principal branch of the logarithm, for which we have
$$
\ln(-s)=\ln(s)-\pi \i, \;\;\; s\in {\mathbb C}^+,
$$
From this formula and from \eqref{eq:G(c+z)_asymptotics} we obtain
\begin{align}
\label{eq:G(c-z)_asymptotics}
\ln (G(c-s;\tau))&=A_1s^2\ln(s)+(A_2-A_1 \pi \i)s^2-(A_3+2cA_1)s\ln(s)\\
\nonumber
&-(A_5+c(A_1+2A_2)-(A_3+2cA_1) \pi \i)s+(A_1c^2+A_3c+A_4)\ln (s)+O(1),  
\end{align}
as $s\to \infty$ in the sector  ${\mathcal S}^{+}_{\epsilon}$.

Now we are ready to prove all the statements of Proposition \ref{prop:phi-asymp}. We will consider the case when $s\to \infty$ in ${\mathcal S}^+_{\epsilon}$, so that $\chi=1$ (the other case $s\in {\mathcal S}^-_{\epsilon}$ can be obtained in exactly the same way). 
First, we note that the function $\G_{p,q}^{m,n}(s)$  in \eqref{eq:integrand} has $m+n$ factors $G(\cdot;\tau)$ in the numerator and $p+q-m-n$ such factors in the denominator. The logarithm of each factor has a dominant asymptotic term $s^2 \ln(s)/(2\tau)$ and all remaining terms are bounded by $O(|s|^2)$, and this implies the result in  \eqref{eq:phi_asymptotics_case1}. 

Assume now that $N=0$ (which implies $m+n=(p+q)/2$). Inspecting asymptotic formulas \eqref{eq:G(c+z)_asymptotics} and \eqref{eq:G(c-z)_asymptotics} we see that, in the asymptotic expansion of $\ln \G_{p,q}^{m,n}(s)$ as $s\to \infty$ in $s\in {\mathcal S}^{+}_{\epsilon}$, the terms 
$$
A_1s^2\ln(s), \; A_2s^2, A_3s\ln(s)
$$
are canceled (as the number of such terms in the numerator of \eqref{eq:integrand} and in the denominator is the same). The term $-A_1\pi \i s^2$ in \eqref{eq:G(c-z)_asymptotics} contributes 
$$
-A_1 \pi \i s^2 (m-(p-n))=\frac{\pi \i}{4 \tau} (p-q) s^2.
$$
The terms $2cA_1s \ln(s)$ in \eqref{eq:G(c+z)_asymptotics} and 
$-2cA_1 s\ln(s)$ in \eqref{eq:G(c-z)_asymptotics} contribute
$$
2A_1 \bigg[-\sum\limits_{j=1}^m b_j + \sum\limits_{j=1}^n (1+\tau-a_j)- \sum\limits_{j=m+1}^q (1+\tau-b_j)+ \sum\limits_{j=n+1}^p a_j \bigg]s \ln(s)=\frac{1}{\tau}\Big(\frac{p-q}{2}(1+\tau)-\mu\Big)s\ln(s)
$$
and the terms $A_3s \ln(s)$ in \eqref{eq:G(c+z)_asymptotics} and 
$-A_3 s\ln(s)$ in \eqref{eq:G(c-z)_asymptotics} contribute
$$
A_3(n-m-(q-m)+(p-n))s
\ln(s)=-\frac{1+\tau}{2\tau}(p-q)s\ln(s).
$$
Combining the above three terms we obtain the result in \eqref{eq:phi_asymptotics_case2}. 

Consider now the case when $N=p-q=0$. Note that these conditions imply that $m+n=p=q$. The terms $A_5s$ in \eqref{eq:G(c+z)_asymptotics} and 
$-A_5s$ in \eqref{eq:G(c-z)_asymptotics} contribute
$$
A_5\times (-m+n-(q-m)+(p-n))s=0.
$$
The terms $c(A_1+2A_2)s$ in \eqref{eq:G(c+z)_asymptotics} and 
$-c(A_1+2A_2)s$ in \eqref{eq:G(c-z)_asymptotics} contribute
$$
(A_1+2A_2)\times \Big( (1+\tau)(n+m-q)-\mu \Big) s=-(A_1+2A_2) \mu s=\frac{1}{\tau}\Big( 1+\ln(\tau)\Big) \mu s.
$$
The terms $A_3 \pi \i s$ in \eqref{eq:G(c-z)_asymptotics} contribute
$$
- A_3 \pi \i (m-(p-n))s=0. 
$$
The terms $2c A_1 \pi \i s$ in \eqref{eq:G(c-z)_asymptotics} contribute 
$$
2 A_1 \pi \i \Big( \sum\limits_{j=1}^m b_j-\sum\limits_{j=n+1}^p a_j \Big)s=\frac{\pi \i}{2\tau}(\mu-\nu) s,
$$
and this ends the proof of \eqref{eq:phi_asymptotics_case3}. 
The proof of \eqref{eq:phi_asymptotics_case4} follows the same steps and will be omitted.
\end{proof}

Next we fix $\alpha \in \c$  and denote
\begin{equation}\label{def_phi}
\phi(s):=\G_{p,q}^{m,n}\Big(\begin{matrix} {\bf a} \\
{\bf b} \end{matrix}
\Big\vert s; \tau\Big)e^{\frac{\pi \alpha}{\tau}{s^2}}. 
\end{equation}
Our plan is to define the $K$-function as a Mellin-Barnes integral 
\begin{equation}\label{eq:def_K_function0}
K_{p,q}^{m,n}\Big( 
\begin{matrix}
{\bf a} \\  {\bf b} 
\end{matrix} \Big
\vert z; \tau, \alpha
\Big ):=\frac{1}{2\pi \i} \int\limits_{\gamma} 
\phi(s) z^{-s}ds.
\end{equation}
To achieve this we need to decide how to choose an appropriate contour of integration $\gamma$. We assume from now on that $\Lambda^+ \cap \Lambda^- = \emptyset$ (we recall that these sets are defined in \eqref{def:Lambda_pm}). Under this assumption,  there exists a contour $\gamma$ which divides the complex plane into two domains such that the set $\Lambda^+$ lies in the domain containing large positive numbers and $\Lambda^-$ lies in the other domain.  The direction of the contour will be chosen to leave the set $\Lambda^-$ on the left.  We will also assume that for $s$ large enough and $\im(s)>0$ the contour $\gamma$  coincides with a half-line with the slope $\tan(\theta^+)$ (for some $\theta^+ \in (0,\pi)$) and
for $s$ large enough and $\im(s)<0$ it coincides with a half-line with the slope  $\tan(\theta^-)$ (for some $\theta^- \in (-\pi, 0)$). We will denote such contours of integration by ${\mathcal L}_{\theta^-,\theta^+}$.  The contour
$\L_{-\pi/2,\pi/2}$  will be denoted by $\L_{\i\infty}$.  Examples of two such contours ${\mathcal L}_{-\pi/4,\pi-\epsilon}$ and $\L_{\i\infty}$ can be seen on Figure \ref{fig1}. The picture in Figure \ref{fig1}  with the contour $\L_{\i \infty}$ shows how to choose the contour when $b_j-a_k> 0$ for some $j$ and $k$ satisfying $m+1 \le j \le q$ and $n+1\le k \le p$. 

\begin{figure}[t!]
\centering
\includegraphics[height =6.5cm]{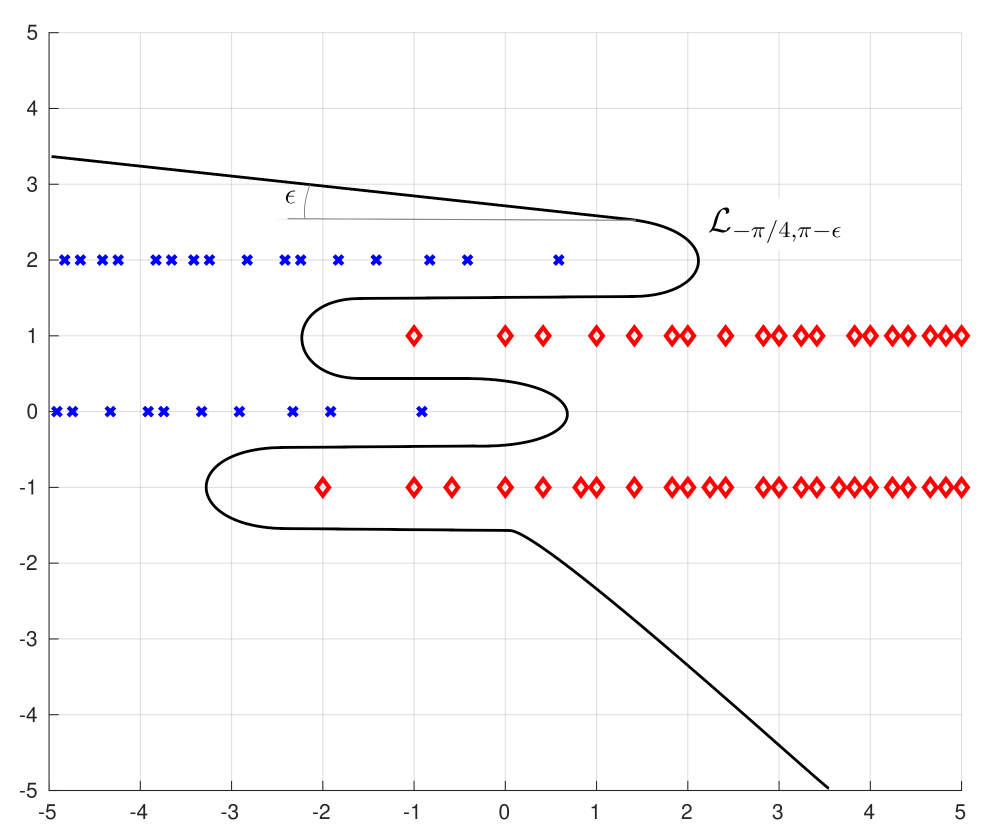}
\includegraphics[height =6.5cm]{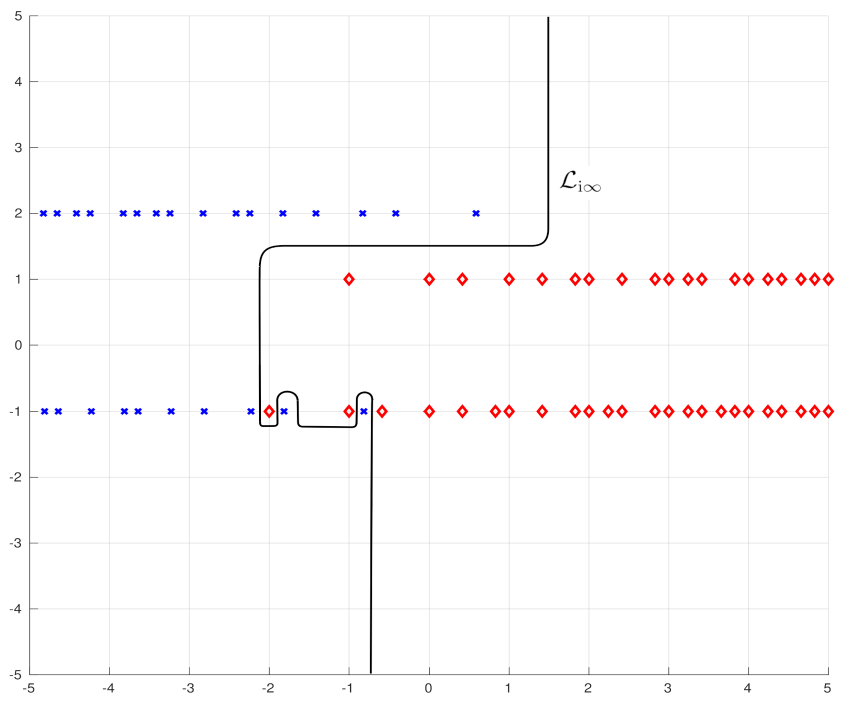}
\caption{Examples of contours ${\mathcal L}_{\theta^+,\theta^-}$, separating the sets $\Lambda^+$ (red diamonds) and $\Lambda^-$ (blue crosses) and having prescribed angles at infinity.} 
\label{fig1}
\end{figure}

The angles $\theta^{\pm}$ have to be chosen carefully, so that the Mellin-Barnes integral \eqref{eq:def_K_function0} over the contour $\gamma=\L_{\theta^-,\theta^+}$ converges. We will distinguish seven different cases of parameters, depending on the leading asymptotic term of the function $\ln \phi(s)$ in the sectors  ${\mathcal S}^{\pm}_{\epsilon}$. According to Proposition \ref{prop:phi-asymp}, this leading term can be one of $s^2 \ln(s), s^2, s\ln(s), s$ or $ \ln(s)$.  In cases (i)-(v) we will have leading asymptotic terms of the form $s^2 \ln(s), s^2$ or $s\ln(s)$ and we will choose any $\theta^+ \in (0,\pi)$ (respectively, $\theta^- \in (-\pi,0)$ such that the real part of the leading asymptotic term of $\ln \phi(s)$ goes to 
$-\infty$ as $s\to \infty$ along the ray $\arg(s)=\theta^+$ (respectively, $\arg(s)=\theta^-$).  In the last two cases (vi) and (vii) the leading asymptotic terms can be $s$ or $\ln(s)$ and these cases will be treated separately. We will also ensure, by restricting the parameters, that 
\begin{itemize}
\item[{\bf (a)}] {\bf  it is possible to choose such $\theta^{\pm}$}, 
\item[{\bf (b)}] {\bf any two such choices will necessarily lead to the same value of the integral.} \label{conditions_a_b}
\end{itemize}

To better understand the discussion that follows, it is helpful to recall some basic geometric facts about how linear and quadratic functions map the complex plane (see Figure \ref{fig2}). In particular, these graphs show the sectors in the complex plane (white regions) where the real parts of $\omega s$ and $\omega s^2$ are negative.

\begin{figure}[t!]
\centering
\includegraphics[height =6.5cm]{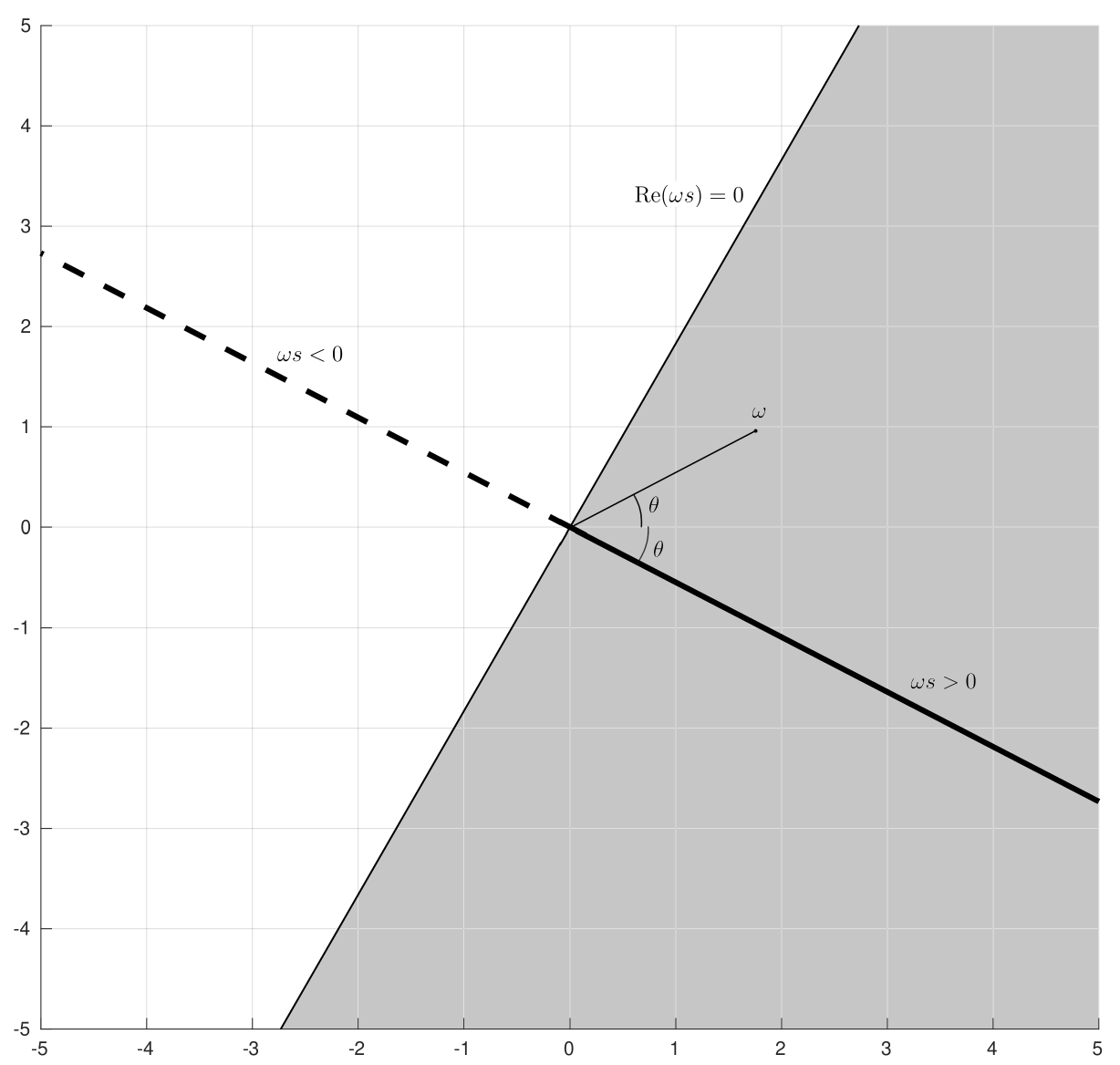}
\includegraphics[height =6.5cm]{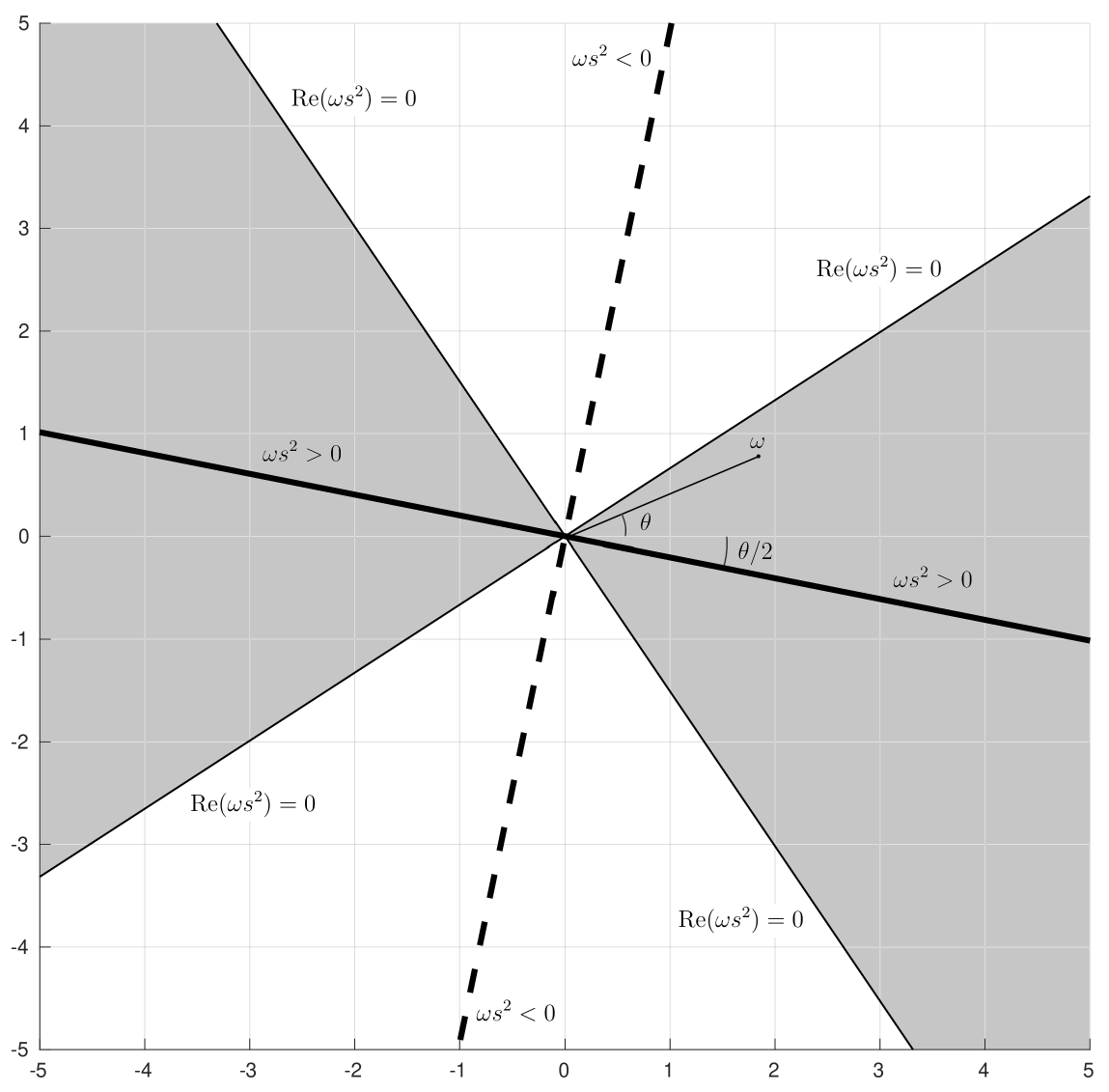}
\caption{Mapping the complex plane by the function $\omega s$ (left graph) and $ \omega s^2$ (right graph).} 
\label{fig2}
\end{figure}

Now we are ready to introduce the following seven cases: 
\label{seven_cases}
 \begin{itemize}
\item[(i)] {\it  Log-quadratic case:} If $N>0$, then $\ln(\phi(s))$ has dominant asymptotic term $N s^2 \ln(s)/(2\tau)$ in both sectors ${\mathcal S}^{\pm}_{\epsilon}$. We choose any $\theta^+ \in (\pi/4,3\pi/4)$ and 
$\theta^- \in (-3\pi/4, -\pi/4)$. It is clear that $\re(s^2)<0$ for $\arg(s)=\theta^{\pm}$.   

\item[(ii)] {\it Quadratic case:} If $N=0$, $\re(\alpha)\ge 0$ and $4\alpha \neq \pm (p-q)\i$, then  $\ln(\phi(s))$ has  quadratic dominant asymptotic terms in both sectors ${\mathcal S}^{\pm}_{\epsilon}$. We choose any $\theta^+ \in (0,\pi)$ such that $\re((\alpha+(p-q)\i/4)s^2)<0$ along the ray $\arg(s)=\theta^+$ and 
we choose any $\theta^- \in(-\pi,0)$ such that $\re((\alpha-(p-q)\i/4)s^2)<0$ along the ray $\arg(s)=\theta^-$.

\item[(iii)] {\it Upper-balanced case:} $N=0$, $4\alpha = (q-p) \i\ne0$ and 
$\arg(\mu) \neq \pi/2$. In this case $\ln(\phi(s))$ has a quadratic dominant asymptotic term in the ${\mathcal S}^{-}_{\epsilon}$ and  a dominant term of order $s \ln(s)$ in 
${\mathcal S}^{+}_{\epsilon}$. We choose any $\theta^+ \in (0,\pi)$ such that 
$\re(\mu s)>0$ along the ray $\arg(s)=\theta^+$ and we choose any $\theta^- \in(-\pi,0)$ such that $\re(\i(q-p)s^2)<0$ along the ray $\arg(s)=\theta^-$.

\item[(iv)] {\it Lower-balanced case:} $N=0$,  $4\alpha = (p-q) \i\ne0$ and 
$\arg(\mu) \neq -\pi/2$.  In this case $\ln(\phi(s))$ has a quadratic dominant asymptotic term in ${\mathcal S}^{+}_{\epsilon}$ and  a dominant term of order $s \ln(s)$ 
in ${\mathcal S}^{-}_{\epsilon}$.  We choose any $\theta^+ \in (0,\pi)$ such that 
 $\re(\i(p-q)s^2)<0$ along the ray $\arg(s)=\theta^+$ and we choose any $\theta^- \in(-\pi,0)$ such that $\re(\mu s)>0$ along the ray $\arg(s)=\theta^-$.

\item[(v)] {\it Balanced case:} $N=\alpha=p-q=0$ and $\re(\mu) \neq 0$. In this case $\ln(\phi(s))$ has dominant asymptotic terms of order $s\ln(s)$ in both sectors ${\mathcal S}^{\pm}_{\epsilon}$. We choose any $\theta^+ \in (0,\pi)$ 
and any $\theta^- \in(-\pi,0)$ such that $\re(\mu s)>0$ along the rays $\arg(s)=\theta^{\pm}$.

\item[(vi)] {\it Linear case:} $N=\alpha=p-q=\mu=0$ and $\re(\nu)<0$ --  in this case $\ln(\phi(s))$ has dominant terms of order $s$ in both sectors ${\mathcal S}^{\pm}_{\epsilon}$. We take $\theta^+=\theta^-=\pi/2$.

\item[(vii)]  {\it Logarithmic case:} $N=\alpha=p-q=\mu=\nu=0$ --  in this case $\ln(\phi(s))$ has dominant terms of order $\ln|s|$ in both sectors ${\mathcal S}^{\pm}_{\epsilon}$. We choose $\theta^{\pm}=\pm \epsilon$ if $z>1$ and $\theta^{\pm}=\pm (\pi-\epsilon)$ if $0<z<1$ (for any $\epsilon \in (0,\pi/2)$). 
\end{itemize}

Above we have described all possible choices of $\theta^{\pm}$. Some specific convenient choices of $\theta^{\pm}$ which satisfy the above conditions can be found in Table \ref{tab:def}.

\begin{definition}\label{def_K_function}
Let $m,n,p,q$ be non-negative integers,  ${\bf a}=(a_1,\dots, a_p) \in {\mathbb C}^p$,
${\bf b}=(b_1,\dots, b_q) \in {\mathbb C}^q$, $\tau>0$ and $\alpha \in \c$. Assume that $\Lambda^+ \cap \Lambda^- = \emptyset$. 
We define the Meijer-Barnes $K$-function as 
\begin{equation}\label{eq:def_K_function}
K_{p,q}^{m,n}\Big( 
\begin{matrix}
{\bf a} \\  {\bf b} 
\end{matrix} \Big
\vert z; \tau, \alpha
\Big ):=\frac{1}{2\pi \i} \int\limits_{\gamma} 
\G_{p,q}^{m,n}\Big(\begin{matrix} {\bf a} \\
{\bf b} \end{matrix}
\Big\vert s; \tau\Big)e^{\frac{\pi \alpha}{\tau}{s^2}}z^{-s}ds,
\end{equation}
where the contour of integration $\gamma={\mathcal L}_{\theta^-,\theta^+}$ is specified as in Table \ref{tab:def}. In cases (i)-(vi) the $K$-function is defined for $z>0$ and 
in case (vii) it is defined for $z\in (0,1) \cup (1,\infty)$ (note that the choice of the contour of integration in this case depends on $z$). 
 When $\alpha=0$, we will write 
$K_{p,q}^{m,n}\Big( 
\begin{matrix}
{\bf a} \\  {\bf b} 
\end{matrix} \Big
\vert z; \tau
\Big )$ instead of $K_{p,q}^{m,n}\Big( 
\begin{matrix}
{\bf a} \\  {\bf b} 
\end{matrix} \Big
\vert z; \tau, 0
\Big ).$  
\end{definition}

The $K$-function is a generalization of the Meijer $G$-function. To see this, we use the following identity
\begin{equation}\label{gamma_as_G}
\Gamma(s)=(2\pi)^{(1-\tau)/2} \tau^{s-1/2} \frac{G(s+\tau;\tau)}{G(s;\tau)},
\end{equation}
which follows from the second formula in \eqref{eq:funct_rel_G}, and rewrite each gamma factor in 
\eqref{def:Meijer_G} as a ratio of two double gamma functions. Then formula \eqref{def:Meijer_G} becomes a special case of \eqref{eq:def_K_function}.

\begin{center}
\begin{table}
\renewcommand{\arraystretch}{1.2}
\begin{tabular}[t]{ |c|c|c| }
 \hline
 (i), (vi) &  $\theta^+=\pi/2$  & $\theta^-=-\pi/2$ \\ \hline 
  (ii) &  $\theta^+=\pi/2$ if $\re(\alpha)>0$ & $\theta^-=-\pi/2$ 
 if $\re(\alpha)>0$\\
 &  $\theta^+=3\pi/4$ if $4\i \alpha+q-p>0$ & $\theta^-=-\pi/4$ 
 if $4\i\alpha+p-q>0$\\ 
 &  $\theta^+=\pi/4$ if $4\i \alpha+q-p<0$ & $\theta^-=-3\pi/4$ 
 if $4\i\alpha+p-q<0$\\ 
 \hline 
  (iii) &  $\theta^+=\pi/2$ if $\im(\mu)<0$, &  $\theta^-=-\pi/4$ if  $p>q$, \\ 
  & $\theta^+=\epsilon$ if $\arg(\mu)\in[0,\pi/2)$, & $\theta^-=-3\pi/4$ if $p<q$ \\ 
    & $\theta^+=\pi-\epsilon$ if $\arg(\mu)\in (\pi/2,\pi]$ &  \\ 
  \hline 
    (iv)  &  $\theta^+=\pi/4$ if  $p>q$, &  $\theta^-=-\pi/2$ if $\im(\mu)>0$,\\ 
  & $\theta^+=3\pi/4$ if $p<q$  &  $\theta^-=-\epsilon$  if
  $\arg(\mu) \in (-\pi/2,0]$, \\ 
 & & $\theta^-=-\pi+\epsilon$  if
  $\arg(\mu) \in [-\pi,-\pi/2) $ \\
  \hline 
  (v)  &  $\theta^+=\pi/2$ if $\im(\mu)<0$, &  $\theta^-=-\pi/2$ if $\im(\mu)>0$, \\ 
       & $\theta^+=\epsilon$  if $\arg(\mu)\in[0, \pi/2)$, & $\theta^-=-\epsilon$ if $\arg(\mu)\in(-\pi/2,0]$, \\ 
       & $\theta^+=\pi-\epsilon$ if $\arg(\mu)\in(\pi/2,\pi]$ & $\theta^-=-\pi+\epsilon$ if $\arg(\mu)\in[-\pi,-\pi/2)$\\ 
 \hline 
 (vii) &   $\theta^+=\epsilon$ if  $z>1$,   &
  $\theta^-=-\epsilon$ if  $z>1$, 
 \\ &
  $\theta^+=\pi-\epsilon$ if $0<z<1$   &
   $\theta^-=-\pi+\epsilon$ if  $0<z<1$
 \\ 
 \hline 
\end{tabular}
\caption{Possible choice of the contour of integration $\L_{\theta^-,\theta^+}$ for the seven cases of parameters described on page \pageref{seven_cases}. }
\label{tab:def} 
\end{table}
\renewcommand{\arraystretch}{1}
\end{center}

Note that the following cases of parameters are excluded from the Definition \ref{def_K_function}: 
\begin{itemize}
    \item $N<0$,
    \item $N=0$ and $\re(\alpha)<0$,
    \item $N=0$, $4\alpha=(q-p)\i$ and $\arg(\mu)=\pi/2$,
    \item $N=0$, $4\alpha=(p-q)\i$ and $\arg(\mu)=-\pi/2$,
    \item $N=\alpha=p-q=\mu=0$ and $\re(\nu)\ge0$, $\nu\ne0$.
\end{itemize}
These parameters were excluded because they violate the conditions (a) and 
(b) for choosing $\theta^{\pm}$ that we discussed on page \pageref{conditions_a_b}. Let us consider the case $N<0$. We can choose four contours of integration $\L_{-\epsilon,\epsilon}$, 
$\L_{-\epsilon,\pi-\epsilon}$, $\L_{-\pi+\epsilon,\epsilon}$ and $\L_{-\pi+\epsilon,\pi-\epsilon}$. These contours can not be transformed into each other, because of the poles of $\phi(s)$ or because of sectors of the complex plane where $|\phi(s)|\to +\infty$ as $s\to \infty$. Thus in the case $N<0$ we can have up to four different functions that could be defined via \eqref{eq:def_K_function}. While this case may be of interest, we decided not to include it in the present paper and focus only on those cases where  the Mellin-Barnes integral \eqref{eq:def_K_function} can lead to a uniquely defined function. For the same reason we exclude the case
$N=0$ and $\re(\alpha)<0$. Next, we do not allow ${\textnormal{arg}}(\mu)=\pi/2$ when $N=0$ and $4\alpha=(q-p)\i$ since in this case there is no angle $\theta^+ \in (0,\pi)$ which would make the function $\phi(s)$ decay to zero as $s\to \infty$ along the line $\arg(s)=\theta^+$. The same argument applies to the next cases where we exclude values of $\mu$ with 
 $\arg(\mu)=-\pi/2$. Finally, let us explain why we only consider  $\re(\nu)< 0$ in the linear case. Let us denote $\nu=\nu_1+\i\nu_2$. For $s=re^{i\theta_{+}}\in\c_{+}$ we have exponential decay of the integrand in 
\eqref{eq:def_K_function} if 
$$
\cos(\theta_{+})\left(\frac{\pi\nu_2}{2\tau}-\ln|z|\right)+\sin(\theta_{+})\left(\frac{\pi\nu_1}{2\tau}+\arg(z)\right)<0,
$$
and for $s=re^{i\theta_{-}}\in\c_{-}$ we have exponential decay if
$$
-\cos(\theta_{-})\left(\frac{\pi\nu_2}{2\tau}+\ln|z|\right)-\sin(\theta_{-})\left(\frac{\pi\nu_1}{2\tau}-\arg(z)\right)<0.
$$
Since this is a complicated relation between $z$ and $s$, we consider only the relatively simple case when  $\theta_+=-\theta_{-}=\pi/2$ and $\re(\nu)=\nu_1<0$. Under this condition we have exponential decay of the integrand for $z$ in the sector
$\pi\nu_1/(2\tau)<\arg(z)<-\pi\nu_1/(2\tau).
$

\begin{remark}\label{remark1}
When applying the Mellin transform to study Mellin-Barnes type integrals, it important to be able to deform the contour of integration in \eqref{eq:def_K_function} to a straight vertical line $c+\i \r$, which is a special case of $\L_{\i \infty}$. Below we list the values of parameters and the variable $z$ allowing for the contour $\gamma$ to be chosen as $\L_{\i \infty}$:
\begin{itemize}
    \item $N>0$;
    \item $N=0$, $\re(\alpha)>0$;
    \item $N=\alpha=p-q=0$, $\mu \in \r \setminus \{0\}$, $\re(\nu)<0$, $|\arg(z)|<-\pi\re(\nu)/(2\tau)$;
    \item $N=\alpha=p-q=\mu=\nu=0$ and $\re(\xi)<-2\tau$, $z\in(0,1)\cup(1,\infty)$. 
\end{itemize}
The convergence of the integral in \eqref{eq:def_K_function} over $\L_{\i \infty}$ for the above choices can be verified directly from Proposition~\ref{prop:phi-asymp}.
\end{remark}

From now on we will call the values of parameters
${\bf a}$, ${\bf b}$, $\alpha$ and $\tau$ for which the
$K$-function is defined {\it admissible parameters}. 
In the next theorem we summarize analyticity properties of the $K$-function.

\begin{theorem}[Analyticity properties]
\label{prop_analyticity}${}$
For fixed values of admissible parameters ${\bf a}$, ${\bf b}$, $\alpha$ and $\tau$, the function 
$
w \in \r \mapsto K_{p,q}^{m,n}\Big( 
\begin{matrix}
{\bf a} \\  {\bf b} 
\end{matrix} \Big
\vert e^{2\pi w}; \tau, \alpha
\Big )
$
can be  continued analytically to an entire function in cases (i)-(v). In case (vi) the function $w \in \r \mapsto K_{p,p}^{m,n}\Big( 
\begin{matrix}
{\bf a} \\  {\bf b} 
\end{matrix} \Big
\vert e^{2\pi w}; \tau
\Big )$
can be continued analytically in the horizontal strip $|\im(w)|<-\re(\nu)/(4\tau)$. 
 In case (vii) this function can be continued analytically in each half-plane $\re(w)<0$ and $\re(w)>0$ and it is continuous on $\r$ if $\re(\xi)<-2\tau$.
\end{theorem}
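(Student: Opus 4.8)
The plan is to reduce the whole statement to one convergence principle and then read off the domains directly from Proposition~\ref{prop:phi-asymp}. Putting $z=e^{2\pi w}$ turns $z^{-s}$ into $e^{-2\pi w s}$, so that
$$
K_{p,q}^{m,n}\Big(\begin{matrix} {\bf a} \\ {\bf b}\end{matrix}\Big\vert e^{2\pi w}; \tau, \alpha\Big)=\frac{1}{2\pi\i}\int_{\gamma}\phi(s)\,e^{-2\pi w s}\,\d s,
$$
with $\phi(s)$ as in \eqref{def_phi}. For each fixed $s\in\gamma$ the integrand is entire in $w$, so by the Weierstrass theorem on locally uniform limits of holomorphic functions (equivalently, by Morera's theorem together with Fubini applied to the bounded part and the tails of $\gamma$), the integral is holomorphic in $w$ on every open set on which it converges absolutely and uniformly on compact subsets. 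The entire argument therefore reduces to locating, in each of the seven cases, the $w$-domain of locally uniform absolute convergence, and the key input is the growth of $\re(\ln\phi(s))$ along the rays $\arg(s)=\theta^{\pm}$.

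In cases (i)--(v) the leading asymptotic term of $\ln\phi(s)$ along the chosen rays is one of $s^2\ln(s)$, $s^2$ or $s\ln(s)$, and by the very construction of $\theta^{\pm}$ (the seven cases preceding Table~\ref{tab:def}) its real part tends to $-\infty$. In all of these cases one therefore has $\re(\ln\phi(s))\le -c\,|s|\ln|s|$ for some $c>0$ and all large $|s|$ on $\gamma$. For $w$ in a disk $|w|\le R$ we have $|e^{-2\pi w s}|\le e^{2\pi R|s|}$, and since $|s|\ln|s|$ dominates $|s|$, the bound $|\phi(s)e^{-2\pi w s}|\le e^{-|s|(c\ln|s|-2\pi R)}$ shows the tails decay faster than any fixed exponential. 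Hence the integral converges absolutely and uniformly on $|w|\le R$ for every $R>0$, and the function is entire, proving the claim in cases (i)--(v).

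In case (vi) we have $N=\alpha=p-q=\mu=0$, so $\phi=\G_{p,q}^{m,n}$, and Proposition~\ref{prop:phi-asymp}(iii) with $\mu=0$ gives $\ln\phi(s)=-\tfrac{\pi\i\chi}{2\tau}\nu s+O(\ln|s|)$. On the rays $\theta^{\pm}=\pm\pi/2$ one substitutes $s=\pm\i t$ (with $t\to+\infty$, $\chi=\pm1$) and combines with $e^{-2\pi w s}$; a short computation shows that the real part of the total exponent equals $\big(\tfrac{\pi\re(\nu)}{2\tau}\pm 2\pi\im(w)\big)t$, the sign matching $\theta^{\pm}$. Requiring decay on both rays forces
$$
\frac{\re(\nu)}{4\tau}<\im(w)<-\frac{\re(\nu)}{4\tau},
$$
i.e. $|\im(w)|<-\re(\nu)/(4\tau)$, a nonempty strip precisely because $\re(\nu)<0$. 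Local uniform convergence on compact subsets of this strip is immediate, so $K$ continues analytically there.

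The delicate case, and the main obstacle, is (vii), where $N=\alpha=p-q=\mu=\nu=0$ and Proposition~\ref{prop:phi-asymp}(iv) yields only $\re(\ln\phi(s))=\tfrac{1}{2\tau}\re(\xi)\ln|s|+O(1)$: now $\phi$ grows merely polynomially, all decay must come from $e^{-2\pi w s}$, and the contour itself depends on $z$. For $\re(w)>0$ (i.e. $z>1$) one uses $\theta^{\pm}=\pm\epsilon$; given any $w_0$ with $\re(w_0)>0$ one selects $\epsilon\in(0,\pi/2)$ small enough that $\re(w e^{\pm\i\epsilon})>0$ on a neighbourhood of $w_0$, which produces exponential decay along $\gamma$ and hence local analyticity. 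Since the poles of $\phi$ lie only in $\Lambda^+\cup\Lambda^-$, Cauchy's theorem makes the value independent of the admissible $\epsilon$, so these local pieces glue into a single holomorphic function on $\{\re(w)>0\}$; the half-plane $\{\re(w)<0\}$ is treated identically with $\theta^{\pm}=\pm(\pi-\epsilon)$. Finally, for continuity on $\r$: when $\re(\xi)<-2\tau$ the exponent $\re(\xi)/(2\tau)<-1$ makes $\int_{\L_{\i\infty}}\phi(s)e^{-2\pi w s}\,\d s$ converge absolutely for every real $w$, including $w=0$; deforming each $\epsilon$-contour onto this vertical line (again permissible since no poles are crossed) identifies both one-sided limits at $w=0$ with the vertical-contour value, giving continuity across $z=1$. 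The crux is exactly this reconciliation of the $z$-dependent contours into one function on each half-plane, and the extraction of the sharp threshold $\re(\xi)<-2\tau$ for continuity at the junction $w=0$.
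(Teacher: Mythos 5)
Your proof is correct and follows essentially the same route as the paper: you combine the asymptotics of Proposition~\ref{prop:phi-asymp} with the prescribed contours to get locally uniform convergence of the Mellin--Barnes integral in $w$ (hence entirety in cases (i)--(v)), read off the strip $|\im(w)|<-\re(\nu)/(4\tau)$ from the linear exponent in case (vi), and in case (vii) deform to $\L_{\i\infty}$ under $\re(\xi)<-2\tau$ to obtain continuity at $w=0$. The only difference is that you make explicit some steps the paper leaves implicit, namely the uniform bound $\re\ln\phi(s)\le -c|s|\ln|s|$ on the tails in cases (i)--(v) and the gluing of the $\epsilon$-dependent contours into a single holomorphic function on each half-plane in case (vii).
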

\begin{proof}
Let us consider case (vi) first. The contour of integration is $\L_{\i \infty}$ and formula \eqref{eq:phi_asymptotics_case3} implies that as $|s|\to \infty$
$$
\G_{p,q}^{m,n}\Big(\begin{matrix} {\bf a} \\
{\bf b} \end{matrix}
\Big\vert s; \tau\Big)=O\Big(e^{\frac{\pi }{2\tau}\re(\nu) |s|}\Big), \;\;\; s\in \L_{\i\infty}. 
$$
From here it follows that the $K$-function defined via \eqref{eq:def_K_function} is analytic in the sector $|\arg(z)|<-\pi \re(\nu)/(2\tau)$, and after changing variables $z=\exp(2\pi w)$ we obtain the desired analyticity in the strip $|\im(w)|<-\re(\nu)/(4\tau)$. 

The cases (i)-(v) are all established in the same way: we use Proposition~\ref{prop:phi-asymp} and the prescription of the contour of integration $\L_{\theta^-,\theta^+}$ in Definition \ref{def_K_function} to check that the function $\phi(s)$ defined in \eqref{def_phi} decays faster than any exponential function as $s\to \infty$ on the contour ${\mathcal L}_{\theta^-,\theta^+}$. For example, in case (i) we have the contour $\gamma={\mathcal L}_{\i \infty}$ and we know from Proposition~\ref{prop:phi-asymp} that the dominant term in the asymptotics of $\phi(s)$ is $\exp(N/(2\tau) s^2 \ln(s))$, and when $s=\i t$ this decays as 
$$
\exp\Big( \frac{N}{2\tau} s^2 \ln(s)\Big)=
\exp\Big(- \frac{N}{2\tau} t^2 \ln |t| + O(t^2)\Big), \;\;\; t\to \infty. 
$$
This proves that the integral on the right-hand side of \eqref{eq:def_K_function} is an analytic function of $w=\ln(z)/(2\pi)$. 
Similar argument can be applied in the remaining cases  (ii)-(v), we omit the details.

Analyticity of $K_{p,p}^{m,n}\Big( 
\begin{matrix}
{\bf a} \\  {\bf b} 
\end{matrix} \Big
\vert e^{2\pi w}; \tau
\Big )$ in case (vii) follows from our choice of the contour of integration. The statement about continuity follows from the fact that if $\re(\xi)<-2\tau$ and $z>0$ then the integrand in 
\eqref{eq:def_K_function} is integrable on the contour $\L_{\i \infty}$ (due to \eqref{eq:phi_asymptotics_case4}) and we can deform each contours $\L_{-\epsilon,\epsilon}$ (when $z>1$) and  $\L_{-\pi+\epsilon,\pi-\epsilon}$ (when $0<z<1$) to the contour $\L_{\i \infty}$. Now the $K$-function is
expressed as an integral 
\eqref{eq:def_K_function} over contour $\L_{\i \infty}$, and since $\phi(s)$ is integrable on this contour, the $K$-function is a continuous function of $z>0$.
\end{proof}

In what follows, whenever we mention the domain of $K$-function, we will mean the Riemann surface of the logarithm $-\infty< \arg(z)<\infty$ in cases (i)-(v), the sector $|\arg(z)| \le - \pi \re(\nu) /(2 \tau) $ in case (vi) 
and the set $(0,1)\cup (1,\infty)$ in case (vii).

We would like to reiterate that in Table \ref{tab:def} we show only one possible way to choose the contour of integration $\L_{\theta^-,\theta^+}$, but there are many other ways to choose the contour of integration in \eqref{eq:def_K_function} (all of which are listed on page \pageref{seven_cases}). However, these different choices of $\theta^{\pm}$ would lead to the same function $K_{p,q}^{m,n}(z)$ defined via \eqref{def_K_function}. The fact that the integral on the right-hand side of \eqref{eq:def_K_function} would not depend on the choice of $\theta^{\pm}$ can be established by the classical technique of rotating the contour of integration.  Another important point is that when choosing the contour $\gamma$ in Definition \ref{def_K_function} we are free to shift the rays $s=c+re^{i\theta^{\pm}}$, $0\le r<\infty$, forming the initial and the terminal parts of the contour ${\mathcal L}_{\theta^-,\theta^+}$ to the left or to the right (i.e. replace $c$ by $c'$ with $\im(c)=\im(c')$). What matters for the convergence of the integral in \eqref{eq:def_K_function} is only the slopes $\theta^{\pm}$ of these rays. This fact will be useful in the proof of the next result, where we will need to shift the contour of integration.

\begin{theorem}[Transformation properties]\label{thm_transformation}
For admissible parameters ${\bf a}$, ${\bf b}$, $\alpha$, $\tau$ and $z$ in the domain of analyticity, we have the following three transformation properties: 
\begin{equation}\label{eq:transformation_K_shift_by_c}
 K_{p,q}^{m,n}\Big( 
\begin{matrix}
c+{\bf a} \\  c+{\bf b} 
\end{matrix} \Big
\vert z; \tau, \alpha
\Big )=
e^{\frac{\pi \alpha}{\tau} c^2}  z^{-c}  K_{p,q}^{m,n}\Big( 
\begin{matrix}
{\bf a} \\  {\bf b} 
\end{matrix} \Big
\vert e^{-\frac{2\pi \alpha}{\tau} c} z; \tau, \alpha
\Big ),  \;\;\; c\in \c,
 \end{equation}
\begin{equation}\label{eq:tranformation_K_z_1/z}
 K_{p,q}^{m,n}\Big( 
\begin{matrix}
{\bf a} \\  {\bf b} 
\end{matrix} \Big
\vert z; \tau, \alpha
\Big )=
 K_{q,p}^{n,m}\Big( 
\begin{matrix}
1+\tau-{\bf b} \\ 1+\tau- {\bf a} 
\end{matrix} \Big \vert
 z^{-1}; \tau, \alpha
\Big ),
 \end{equation}
 \begin{equation}\label{eq:transformation_K_z_power_tau}
 K_{p,q}^{m,n}\Big( 
\begin{matrix}
{\bf a} \\  {\bf b} 
\end{matrix} \Big
\vert z; \tau, \alpha
\Big )=
A \tau  K_{p,q}^{m,n}\Big( 
\begin{matrix}
\tau^{-1}{\bf a} \\  \tau^{-1}{\bf b} 
\end{matrix} \Big
\vert  (Bz)^{\tau}; \tau^{-1}, \alpha-\frac{N}{2\pi} \ln(\tau)
\Big ),
 \end{equation}
 where $A$ and $B$ as defined in \eqref{def_A_B} are given by
\begin{equation*}
A=(2\pi)^{\frac{1-\tau}{2\tau}(\nu-(1+\tau)(n+m-q))}
\tau^{\frac{1}{2\tau} ((1+\tau)\mu-\xi)-N}, \;\;\; 
B=(2\pi)^{\frac{1-\tau}{2\tau}(p-q)} \tau^{\frac{1}{2\tau}(N(1+\tau)-2\mu)}.
\end{equation*}
\end{theorem}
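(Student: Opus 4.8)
The plan is to prove all three identities by a single mechanism: substitute the appropriate transformation identity for the integrand $\G_{p,q}^{m,n}$ from \eqref{eq:G_identity1}--\eqref{eq:G_s_over_tau_transformation} into the Mellin--Barnes representation \eqref{eq:def_K_function}, perform a linear change of the integration variable, and recognize the result as the representation of the right-hand side. In each case the substitution sends the contour $\gamma$ to another contour that separates the transformed pole sets \eqref{def:Lambda_pm} and keeps the slopes $\theta^\pm$ at infinity; the freedom to use any admissible contour (discussed just before the theorem) then identifies the value with the $K$-function on its standard contour from Table~\ref{tab:def}. For \eqref{eq:transformation_K_shift_by_c} I would start from the integral for $K_{p,q}^{m,n}(c+{\bf a}; c+{\bf b}\mid z;\tau,\alpha)$ and use the second equality in \eqref{eq:G_identity1} to write the integrand as $\G_{p,q}^{m,n}({\bf a};{\bf b}\mid s-c;\tau)$. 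Substituting $u=s-c$, expanding $e^{\pi\alpha s^2/\tau}=e^{\pi\alpha c^2/\tau}e^{2\pi\alpha cu/\tau}e^{\pi\alpha u^2/\tau}$ and $z^{-s}=z^{-c}z^{-u}$, and absorbing $e^{2\pi\alpha cu/\tau}z^{-u}=(e^{-2\pi\alpha c/\tau}z)^{-u}$ produces exactly the claimed right-hand side; the shifted contour $\gamma-c$ separates $\Lambda^\pm$ (the poles of the shifted parameters being $\Lambda^\pm+c$) and is therefore admissible.

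For \eqref{eq:tranformation_K_z_1/z} I would apply the first equality in \eqref{eq:G_identity1} to write $\G_{p,q}^{m,n}({\bf a};{\bf b}\mid s;\tau)=\G_{q,p}^{n,m}(1+\tau-{\bf b};1+\tau-{\bf a}\mid -s;\tau)$ and then substitute $u=-s$, using $s^2=u^2$ and $z^{-s}=(z^{-1})^{-u}$. The essential bookkeeping is that under $s\mapsto -s$ the pole sets map as $\Lambda^\pm\mapsto -\Lambda^\pm$, and a short direct computation shows $-\Lambda^-=\Lambda^+_{\mathrm{new}}$ and $-\Lambda^+=\Lambda^-_{\mathrm{new}}$ for the swapped parameters. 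The reflected contour has slopes $\theta^\pm+\pi$, and the reversal of orientation combines with $ds=-du$ so that the resulting contour is an admissible contour, with the correct orientation, for the right-hand side $K$-function.

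The substantive step is \eqref{eq:transformation_K_z_power_tau}. Here I would insert the modular identity \eqref{eq:G_s_over_tau_transformation}, so that the integrand becomes $A\,B^{-s}\,\tau^{-Ns^2/(2\tau)}\,\G_{p,q}^{m,n}(\tau^{-1}{\bf a};\tau^{-1}{\bf b}\mid s\tau^{-1};\tau^{-1})\,e^{\pi\alpha s^2/\tau}\,z^{-s}$, and substitute $s=\tau s'$ with $ds=\tau\,ds'$. The powers of $z$ and $B$ combine into $(Bz)^{-\tau s'}$, matching the argument $(Bz)^\tau$; the prefactor $A\tau$ comes out of the integral; and the two Gaussian-type factors combine as $\tau^{-N\tau(s')^2/2}e^{\pi\alpha\tau(s')^2}=\exp\!\big(\tau(s')^2[\pi\alpha-\tfrac{N}{2}\ln\tau]\big)$, which equals $e^{\pi\alpha'\tau(s')^2}=e^{\pi\alpha'(s')^2/\tau^{-1}}$ precisely when $\alpha'=\alpha-\tfrac{N}{2\pi}\ln\tau$. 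This is the source of the shifted parameter in the statement. Since $\tau>0$, scaling by $\tau$ preserves the slopes $\theta^\pm$ and maps the poles correctly; one verifies that $\tau^{-1}\Lambda^\pm$ are exactly the pole sets for the parameters $(\tau^{-1}{\bf a},\tau^{-1}{\bf b},\tau^{-1})$, so the scaled contour is admissible.

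I expect the main obstacle to be the contour analysis rather than the algebra: in each identity one must confirm that the transformed contour separates the new pole sets with the correct orientation and falls into the case of the classification for which the $K$-function is defined, so that contour independence applies and the integral equals the $K$-function on its standard contour. The verification of the constants (the $\alpha$-shift and the factors $A$, $B$) is routine but must be tracked carefully; it is eased by noting that when $N=0$ the shift gives $\alpha'=\alpha$, while when $N\neq0$ we are necessarily in the log-quadratic case~(i) where $\alpha$ is unrestricted, so admissibility of the transformed parameters is preserved in every case.
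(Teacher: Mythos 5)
Your proposal is correct and follows essentially the same route as the paper: substitute the identities \eqref{eq:G_identity1} and \eqref{eq:G_s_over_tau_transformation} into the Mellin--Barnes representation \eqref{eq:def_K_function}, perform the corresponding linear change of variable, and use contour independence to identify the result. You in fact supply more detail than the paper does for \eqref{eq:tranformation_K_z_1/z} and \eqref{eq:transformation_K_z_power_tau} (the paper omits these computations), and your bookkeeping of the Gaussian factors, the $\alpha$-shift, and the contour/pole-set correspondence is accurate.
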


Note that the Meijer $G$-function also enjoys properties similar to \eqref{eq:transformation_K_shift_by_c}
 and \eqref{eq:tranformation_K_z_1/z}, see formulas 9.31.2 and 9.31.5 in \cite{Jeffrey2007}. Formula 
 \eqref{eq:transformation_K_z_power_tau} is unique to $K$-function and has no analogue in the case of Meijer $G$-function. This last transformation formula is a consequence of the modular transformation property \eqref{eq:G_1_over_tau} of the double gamma function. 

\vspace{0.25cm}
\noindent
{\bf Proof of Theorem \ref{thm_transformation}:}
We prove \eqref{eq:transformation_K_shift_by_c} first. 
Let $\gamma={\mathcal L}_{\theta^+,\theta^-}$ be a contour of integration from Definition \ref{def_K_function} corresponding to parameters ${\mathbf a}$, ${\mathbf b}$. Then $\gamma+c$ is a contour corresponding to parameters $\tilde {\mathbf a}=c+{\mathbf a}$, 
$\tilde {\mathbf b}=c+{\mathbf b}$.
Also, if $\mu$ and $\nu$ are given by \eqref{eq:parameters} for parameters ${\mathbf a}$, ${\mathbf b}$, then the corresponding values for  parameters $\tilde {\mathbf a}$, 
$\tilde {\mathbf b}$ are 
$$
\tilde \mu = Nc+\mu, \;\;\; \tilde \nu = (p-q)c+\nu, 
$$
which shows that the seven cases discussed on page \pageref{seven_cases} are preserved under this change of parameters. 
Therefore we can write
 \begin{align*}
K_{p,q}^{m,n}\Big( 
\begin{matrix}
c+{\bf a} \\  c+{\bf b} 
\end{matrix} \Big
\vert z; \tau, \alpha
\Big )&=\frac{1}{2\pi \i} \int\limits_{\gamma+c} 
\G_{p,q}^{m,n}\Big(\begin{matrix} c+{\bf a} \\
c+{\bf b} \end{matrix}
\Big\vert s; \tau\Big)e^{\frac{\pi \alpha}{\tau}{s^2}}z^{-s}\d s\\
&=\frac{1}{2\pi \i} \int\limits_{\gamma+c} 
\G_{p,q}^{m,n}\Big(\begin{matrix} {\bf a} \\
{\bf b} \end{matrix}
\Big\vert s-c; \tau\Big)e^{\frac{\pi \alpha}{\tau}{s^2}}z^{-s}\d s\\
&=\frac{1}{2\pi \i} \int\limits_{\gamma} 
\G_{p,q}^{m,n}\Big(\begin{matrix} {\bf a} \\
{\bf b} \end{matrix}
\Big\vert v; \tau\Big)e^{\frac{\pi \alpha}{\tau}{(v+c)^2}}z^{-(v+c)}\d v
\\&
=e^{\frac{\pi \alpha}{\tau} c^2}  z^{-c}  K_{p,q}^{m,n}\Big( 
\begin{matrix}
{\bf a} \\  {\bf b} 
\end{matrix} \Big
\vert e^{-\frac{2\pi \alpha}{\tau} c} z; \tau, \alpha
\Big ).
\end{align*}
In the above computation, we used the second identity in \eqref{eq:G_identity1} in step 2 and we changed the variable of integration  $s=v+c$ in step 3. 
Formula \eqref{eq:tranformation_K_z_1/z} is established in the same way, by using the first identity in \eqref{eq:G_identity1} and changing the variable of integration $s\mapsto -s$. The third transformation \eqref{eq:transformation_K_z_power_tau} can be proved using \eqref{eq:G_s_over_tau_transformation} and changing the variable of integration $s\mapsto v \tau$ in \eqref{eq:def_K_function}, the details are omitted.    
\qed

\begin{theorem}\label{thm_logarithmic_case}
Assume that $p\ge 1$ and the parameters ${\mathbf a}=(a_1,\dots,a_p)\in \c^p$ and ${\mathbf b}=(b_1,\dots,b_p) \in \c^p$ satisfy $\sum_{j=1}^p a_j=\sum_{j=1}^p b_j$. Then 
\begin{equation}\label{K_is_zero_1}
K_{p,p}^{0,p}\Big( 
\begin{matrix}
{\bf a} \\  {\bf b} 
\end{matrix} \Big
\vert z; \tau
\Big )=0, \;\;\; {\textnormal{ for }} \; z>1,  
\end{equation}
and 
\begin{equation}\label{K_is_zero_2}
K_{p,p}^{p,0}\Big( 
\begin{matrix}
{\bf a} \\  {\bf b} 
\end{matrix} \Big
\vert z; \tau
\Big )=0, \;\;\; {\textnormal{ for }} \; 0<z<1.  
\end{equation}
\end{theorem}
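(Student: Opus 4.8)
The plan is to prove \eqref{K_is_zero_1} by shifting the contour to the right and to deduce \eqref{K_is_zero_2} from it using the reflection identity \eqref{eq:tranformation_K_z_1/z}. First I would record the shape of the integrand. With $m=0$, $n=p$, $q=p$ the products in \eqref{eq:integrand} collapse to
$$
\G_{p,p}^{0,p}(s)=\prod_{j=1}^{p}\frac{G(1+\tau-a_j+s;\tau)}{G(1+\tau-b_j+s;\tau)},
$$
and a direct computation of the parameters \eqref{eq:parameters} under the hypothesis $\sum a_j=\sum b_j$ gives $N=p-q=\mu=\nu=0$ (indeed $\mu=\nu=\sum a_j-\sum b_j$ here), so together with $\alpha=0$ we are in the logarithmic case (vii). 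Moreover $\Lambda^+=\emptyset$ by \eqref{def:Lambda_pm}, so $\G_{p,p}^{0,p}$ is analytic everywhere to the right of the separating contour. For $z>1$ this contour is $\L_{-\epsilon,\epsilon}$, whose two ends run off to $+\infty$ along the rays $\arg(s)=\pm\epsilon$.

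The heart of the argument is to close this contour to the right. I would truncate $\L_{-\epsilon,\epsilon}$ at radius $R$ and join its two endpoints by the circular arc $C_R=\{Re^{\i\phi}:|\phi|\le\epsilon\}$. The bounded region so enclosed lies entirely to the right of $\L_{-\epsilon,\epsilon}$ and therefore contains none of the poles of $\G_{p,p}^{0,p}$, which sit in $\Lambda^-$ to the left (recall $\Lambda^+=\emptyset$). Hence by Cauchy's theorem the integral over the truncated contour and the integral over $C_R$ sum to zero. Letting $R\to\infty$, the truncated integral tends to $2\pi\i\,K_{p,p}^{0,p}\big(\,\cdot\,|z;\tau\big)$, so it remains only to show $\int_{C_R}\G_{p,p}^{0,p}(s)z^{-s}\,\d s\to0$.

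This last step is the one I expect to be the main obstacle, because the arc $C_R$ hugs the positive real axis, a direction not covered by the sectors $\mathcal{S}^{\pm}_{\epsilon}$ of Proposition~\ref{prop:phi-asymp}. I would therefore not quote that proposition but instead apply the uniform expansion \eqref{eqn_lnG_E}, valid throughout $|\arg(z)|<\pi-\epsilon$, directly to each factor $G(1+\tau-a_j+s;\tau)$ and $G(1+\tau-b_j+s;\tau)$: the $s^2\ln s$ and $s^2$ contributions cancel because numerator and denominator carry equally many factors, while the $s\ln s$ and $s$ contributions cancel precisely because $\sum a_j=\sum b_j$. What survives is $\ln\G_{p,p}^{0,p}(s)=\tfrac{\xi}{2\tau}\ln s+O(1)$, the purely polynomial growth already recorded in \eqref{eq:phi_asymptotics_case4}, now established uniformly up to the positive real axis. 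Since $|z^{-s}|=e^{-\re(s)\ln z}\le e^{-R\cos\epsilon\,\ln z}$ decays exponentially on $C_R$ (as $\ln z>0$), while $|\G_{p,p}^{0,p}(s)|=O\big(R^{\re(\xi)/(2\tau)}\big)$ and the arc has length $O(R)$, the arc integral vanishes and \eqref{K_is_zero_1} follows.

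Finally, I would obtain \eqref{K_is_zero_2} for free from the reflection identity \eqref{eq:tranformation_K_z_1/z}, which gives
$$
K_{p,p}^{p,0}\Big(\begin{matrix}{\bf a}\\{\bf b}\end{matrix}\Big\vert z;\tau\Big)=K_{p,p}^{0,p}\Big(\begin{matrix}1+\tau-{\bf b}\\1+\tau-{\bf a}\end{matrix}\Big\vert z^{-1};\tau\Big).
$$
The reflected parameters $\tilde{\bf a}=1+\tau-{\bf b}$ and $\tilde{\bf b}=1+\tau-{\bf a}$ still satisfy $\sum\tilde a_j=\sum\tilde b_j$, so \eqref{K_is_zero_1} applies to the right-hand side and shows it vanishes whenever $z^{-1}>1$, that is for $0<z<1$, which is exactly \eqref{K_is_zero_2}.
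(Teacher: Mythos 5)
Your proof is correct, and its overall strategy coincides with the paper's: both exploit that $\Lambda^+=\emptyset$ so the integrand is pole-free to the right of $\L_{-\epsilon,\epsilon}$, and both push the contour off to $+\infty$ where $z^{-s}$ kills the integral for $z>1$; the second identity is deduced from the first via \eqref{eq:tranformation_K_z_1/z} in exactly the same way. Where you genuinely diverge is in how the decay of the integrand near the positive real axis is justified. The paper invokes the Phragm\'en--Lindel\"of theorem together with the fact that $G(\cdot;\tau)$ is entire of order $2$ to conclude exponential decay uniformly inside the hairpin, and then translates the contour by $c\to+\infty$. You instead close the truncated contour with the arc $C_R$ and estimate the integrand there directly from the uniform Barnes expansion \eqref{eqn_lnG_E}, observing that the $s^2\ln s$ and $s^2$ terms cancel by counting factors and the $s\ln s$ and $s$ terms cancel precisely because $\sum a_j=\sum b_j$, leaving only the polynomial bound $|\G_{p,p}^{0,p}(s)|=O(|s|^{\re(\xi)/(2\tau)})$ against the exponential decay of $|z^{-s}|$. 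Your route is more self-contained and arguably more transparent: it makes explicit where the hypothesis $\sum a_j=\sum b_j$ is used, and it supplies the quantitative control up to the positive real axis (a direction not covered by Proposition~\ref{prop:phi-asymp}) that the paper's appeal to Phragm\'en--Lindel\"of leaves somewhat implicit. The paper's argument is shorter and avoids redoing the asymptotic bookkeeping, at the cost of relying on a softer complex-analytic principle.
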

\begin{proof}
Let us prove \eqref{K_is_zero_1}. We check that we are in the logarithmic case, since $N=p-q=\mu=\nu=0$. 
 The conditions imply that the integrand in \eqref{eq:def_K_function}
has no poles to the right of the contour of integration $\gamma$ (in other words, the set $\Lambda^+$ in \eqref{def:Lambda_pm} is empty). 
Assuming that $z>1$ and
using the Phragmen-Lindel\"of Theorem  (see Corollary 4.2 on page 139 in \cite{Conway1978}) and the fact that the double gamma function is entire of order 2, the integrand in \eqref{eq:def_K_function} decays exponentially as $s\to \infty$, uniformly inside the contour $\L_{-\epsilon,\epsilon}$. Thus we can shift this contour of integration $\L_{-\epsilon,\epsilon} \mapsto c+\L_{-\epsilon,\epsilon}$ for any $c>0$, and taking $c \to +\infty$ the integral in \eqref{eq:def_K_function} will converge to zero (due to the exponential decay of the integrand). This ends the proof of \eqref{K_is_zero_1}. The second result \eqref{K_is_zero_2} can be obtained from 
\eqref{K_is_zero_1} by applying transformation formula \eqref{eq:tranformation_K_z_1/z}. 
\end{proof}

\section{Algebraic asymptotics of $K$ function}\label{section_K_asymptotics}

Suppose $f(s)$ is meromorphic in $\mathbb{C}$ and $F(z)$ is defined by the Mellin-Barnes integral
\begin{equation}\label{eq:F-defined}
	F(z)=\int_{\gamma}f(s)z^{-s}\d s,
\end{equation}
which is absolutely convergent for all $z$ in some sector 
\begin{equation}\label{eq:z-sector}
	S_{\alpha,\beta}=\{z\!:-\infty\le\alpha<\arg(z)<\beta\le\infty\}~(\text{if}~\alpha<\beta),\;\; \text{or} \;\; ~S_{\alpha,\alpha}=\{z\!: z=re^{i\alpha},~r>0\}
\end{equation}
on the Riemann surface of the logarithm. Assume that the integration contour $\gamma$ is a simple loop passing through the point at infinity (stereographic projection of a closed Jordan curve passing through the north pole) which lies entirely in a left half-plane $\re(s)<\sep$ for some real $\sep$, and that $\gamma$ avoids the poles of $f$. 
By the Jordan curve theorem, this contour divides the plane into two simply connected domains.  We will call the domain containing the half-plane $\re(s)>\sep$ the exterior domain and the other one -- the interior domain with respect to $\gamma$.  This induces a positive direction on $\gamma$ -- the one that leaves the  interior domain on the left. We will assume this to be the direction of integration in \eqref{eq:F-defined}.

Denote by $\Lambda_{-}$ ($\Lambda_{+}$) the set of poles of $f$ in the interior (exterior) domain.  Denote by $\Lambda_{-}^{\sharp}$ the subset of elements of  $\Lambda_{-}$ having the maximal real part, for which we will use the symbol $\lambda_{\sharp}$, and assume that $\Lambda_{-}^{\sharp}$ is finite, say $\#\Lambda_{-}^{\sharp}=r$.  We will make the following assumptions regarding the asymptotic behavior of $f$ (in addition to the absolute convergence of the integral in \eqref{eq:F-defined}):

\smallskip

(I) either there exists a semi-strip $\{s: \im(s)>t_1, \sigma_{\sharp}<\re(s)<\sep~\text{with}~\sigma_{\sharp}<\lambda_{\sharp}\}$ free from the points of $\gamma$  or there is a sequence  $\eta_k\to+\infty$ such that
$$
\lim\limits_{k\to\infty}|f(\sigma+i\eta_k)e^{\eta_k\beta}|=0
$$ 
uniformly in $\sigma\in[\sigma_{\sharp},\sep]$, where  $\beta$ is defined in \eqref{eq:z-sector}; 

\smallskip

(II) either there exists a semi-strip $\{s: \im(s)<t_2, \sigma_{\sharp}<\re(s)<\sep~\text{with}~\sigma_{\sharp}<\lambda_{\sharp}\}$ free from the points of $\gamma$  or there is a sequence  $\eta_k\to+\infty$ such that
$$
\lim\limits_{k\to\infty}|f(\sigma-i\eta_k)e^{-\eta_k\alpha}|=0
$$ 
uniformly in $\sigma\in[\sigma_{\sharp},\sep]$, where  $\alpha$ is defined in \eqref{eq:z-sector}.

\smallskip

\begin{lemma}\label{lm:F_zero_asymp}
	Denote the elements of $\Lambda_{-}^{\sharp}$ by $p_1,p_2,\ldots,p_r$, $\re(p_j)=\lambda_{\sharp}$ for $j=1,\ldots,r$, and write $m_1,m_2,\ldots,m_r$ for their corresponding multiplicities. Under the above assumptions, the following asymptotic approximation holds as $z\to0$ in the sector $S_{\alpha,\beta}$, $\alpha\le\beta$,
	\begin{equation}\label{eq:F_zero_asymp}
		F(z)\sim \bigg[\sum\limits_{k=1}^{r} z^{-p_k}\sum\limits_{i=0}^{m_k-1}A_{k,i}\ln^i(z)\bigg](1+O(|z|^{-\lambda_{\sharp}+\delta}))
	\end{equation}
	for some $\delta>0$ and some complex constants $A_{k,i}$. The approximation is uniform in any closed subsector of the sector $S_{\alpha,\beta}$.
\end{lemma}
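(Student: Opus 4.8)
The plan is to establish \eqref{eq:F_zero_asymp} by the classical device of pushing the contour $\gamma$ to the left, across the finitely many poles of $\Lambda_{-}^{\sharp}$, and then estimating the resulting integral over a contour that lies deeper in the left half-plane. The heuristic is clear: as $z\to0$ we have $\re(\ln z)\to-\infty$, so the factor $z^{-s}=e^{-s\ln z}$ decays on rays where $\re(s)\to-\infty$ and blows up where $\re(s)\to+\infty$. Hence the dominant contribution must come from the poles of $f$ lying to the left of $\gamma$, that is, from $\Lambda_{-}$, and among these from the rightmost ones, $\Lambda_{-}^{\sharp}$, where $\re(s)=\lambda_{\sharp}$.

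First I would fix a real number $\sigma_{\sharp}$ with
$$\max\{\re(p):p\in\Lambda_{-}\setminus\Lambda_{-}^{\sharp}\}<\sigma_{\sharp}<\lambda_{\sharp}$$
(the left bound being $-\infty$ if the set is empty), so that the strip $\sigma_{\sharp}<\re(s)<\sep$ contains precisely the poles $p_1,\dots,p_r\in\Lambda_{-}^{\sharp}$. I would then deform $\gamma$ into a new contour $\gamma'$ that agrees with $\gamma$ on its two tails at infinity (keeping the slopes there, which is all that governs convergence) but whose finite part is shifted into $\{\re(s)\le\sigma_{\sharp}\}$. Assumptions (I) and (II) are exactly what legitimizes this shift: either a semi-strip free of $\gamma$ permits the deformation without meeting $\gamma$ except across $\Lambda_{-}^{\sharp}$, or the decay of $f$ along $\eta_k\to+\infty$ forces the two horizontal bridges at heights $\pm\eta_k$ to vanish in the limit. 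Indeed, the integrand on a bridge is bounded by $\sup_{\sigma}|f(\sigma\pm i\eta_k)|\,|z|^{-\sigma}e^{\pm\eta_k\arg(z)}$, and on any closed subsector $\arg(z)$ stays below $\beta$, respectively above $\alpha$, so the bounds $|f(\sigma+i\eta_k)e^{\eta_k\beta}|\to0$ and $|f(\sigma-i\eta_k)e^{-\eta_k\alpha}|\to0$ kill these contributions. The residue theorem then yields, up to the sign fixed by the orientation of $\gamma$,
$$F(z)=2\pi i\sum_{k=1}^{r}\res_{s=p_k}\big[f(s)z^{-s}\big]+\int_{\gamma'}f(s)z^{-s}\,\d s.$$

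Next I would evaluate the residues and the remainder. Expanding $f$ in a Laurent series at the pole $p_k$ of order $m_k$ and multiplying by $z^{-s}=z^{-p_k}\sum_{l\ge0}(-\ln z)^l(s-p_k)^l/l!$, the coefficient of $(s-p_k)^{-1}$ is $z^{-p_k}$ times a polynomial in $\ln(z)$ of degree $m_k-1$; summing over $k$ produces exactly $\sum_{k}z^{-p_k}\sum_{i=0}^{m_k-1}A_{k,i}\ln^i(z)$ with constants $A_{k,i}$ depending only on $f$ and $p_k$. For the remainder, on $\gamma'$ one has $\re(s)\le\sigma_{\sharp}$, hence $|z^{-s}|\le|z|^{-\sigma_{\sharp}}e^{\im(s)\arg(z)}$, so that
$$\Big|\int_{\gamma'}f(s)z^{-s}\,\d s\Big|\le|z|^{-\sigma_{\sharp}}\int_{\gamma'}|f(s)|\,e^{\im(s)\arg(z)}\,|\d s|.$$
The last integral converges because the tails of $\gamma'$ coincide with those of $\gamma$, along which the defining integral converges absolutely, and it is bounded uniformly for $\arg(z)$ in any closed subsector of $S_{\alpha,\beta}$. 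With $\delta:=\lambda_{\sharp}-\sigma_{\sharp}>0$ this gives a remainder of size $O(|z|^{-\lambda_{\sharp}+\delta})$, uniformly in the subsector. Since the bracketed main term has order $|z|^{-\lambda_{\sharp}}$ (times at most a power of $\ln|z|$, which is absorbed by $|z|^{\delta}$), combining the two contributions produces the additive form $F(z)=\big[\sum_k z^{-p_k}\sum_i A_{k,i}\ln^i(z)\big]+O(|z|^{-\lambda_{\sharp}+\delta})$, equivalently \eqref{eq:F_zero_asymp}.

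The main obstacle is the rigorous justification of the contour shift described above: one must verify, case by case according to which alternative of (I) and (II) is in force, that the horizontal bridges at $\im(s)=\pm\eta_k$ are absent or vanish, that no poles other than those of $\Lambda_{-}^{\sharp}$ are swept during the deformation, and that every estimate remains uniform over closed subsectors of $S_{\alpha,\beta}$. Once the shift is secured, the residue bookkeeping and the remainder bound are routine.
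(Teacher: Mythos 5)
Your proposal is correct and follows essentially the same route as the paper: choose a cut-off abscissa strictly between $\max_{p\in\Lambda_{-}\setminus\Lambda_{-}^{\sharp}}\re(p)$ and $\lambda_{\sharp}$, deform $\gamma$ so that its finite part lies to the left of that line (justifying the deformation via the alternatives in (I) and (II), with the horizontal bridges killed by the stated limits), collect the residues at $\Lambda_{-}^{\sharp}$ to get the $z^{-p_k}\ln^i(z)$ terms, and bound the shifted integral by $M|z|^{-\sigma_{*}}$ using absolute convergence. The only cosmetic differences are your explicit $2\pi\i$ in front of the residue sum (harmless, since the $A_{k,i}$ are unspecified constants) and your reuse of the symbol $\sigma_{\sharp}$ for what the paper calls $\sigma_{*}$.
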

\begin{proof}  Choose any $\sigma_{*}\in(\sigma_{\sharp},\lambda_{\sharp})$  satisfying 
$$
\max\limits_{p\in\Lambda_{-}\setminus\Lambda_{-}^{\sharp}}(\re(p))<\sigma_{*}
$$
(which exists in view of $\sigma_{\sharp}<\lambda_{\sharp}$).  We now delete the part of the contour $\gamma$ with $\re(s)>\sigma_{*}$ and connect the remaining parts by straight line segments lying on the line  $\re(s)=\sigma_{*}$. Denote the new contour by $\gamma'$. By construction the only singularities of $f$ lying between $\gamma$ and $\gamma'$ are the poles from the set $\Lambda_{-}^{\sharp}$. Hence, by Cauchy's theorem
$$
F(z)=\int_{\gamma}f(s)z^{-s}\d s=\sum\limits_{k=1}^{r}\res\limits_{s=p_k}\big[f(s)z^{-s}\big]+\int_{\gamma'}f(s)z^{-s}\d s
$$
if there are no points of $\gamma$ in both semi-strips defined in (I) and (II) above. Indeed, in this case the contour is only altered in the rectangle $\{\sigma_{*}<\re(s)<\sep, t_2<\im(s)<t_1\}$ not affecting the convergence of the integral. In the case when the  upper and/or lower semi-strip contains the points of $\gamma$,  the replacement of $\gamma$ by $\gamma'$ is justified by the estimates using the limit from (I): 
\begin{multline*}
	\left|\int_{\sigma_{\sharp}}^{\sep}f(\sigma+i\eta_k)z^{-\sigma-i\eta_k} \d \sigma\right|\le
	e^{\eta_k\arg(z)}\int_{\sigma_{\sharp}}^{\sep}|f(\sigma+i\eta_k)||z|^{-\sigma}\d \sigma
	\\
	\le (\sep-\sigma_{\sharp})e^{\eta_k\beta}\max\limits_{\sigma_{\sharp}\le\sigma\le\sep}|f(\sigma+i\eta_k)||z|^{-\sigma}\to0~\text{as}~k\to\infty.
\end{multline*}
 and similar estimates in the lower half plane using the limit in (II). Now for all $s\in \gamma'$ we have $\re(s)\le\sigma_{*}$. This implies that for all sufficiently small $|z|$ and all $s\in \gamma'$:
$$
|z^{-s}|\le|z|^{-\sigma_{*}}e^{\arg(z)\im(s)}.
$$
Then we have, in view of the absolute convergence,
$$
\bigg|\int_{C'}f(s)z^{-s}\d s\bigg|\le
\int_{C'}|f(s)||z^{-s}||\d s|
\le|z|^{-\sigma_{*}}\int_{C'}|f(s)|e^{\im(s)\arg(z)}||\d s|\le M|z|^{-\sigma_{*}}.
$$
Computing the residues we finally arrive at \eqref{eq:F_zero_asymp}.
\end{proof}

\begin{remark} The standard residue formula gives the following expression for the constants $A_{k,i}$ 
$$
A_{k,i}=\binom{m_k-1}{i}(-1)^i\lim\limits_{s\to p_k}[(s-p_k)^{m_k}f(s)]^{(m_k-i)}.
$$
\end{remark}

Assume now that the integration contour $\gamma$ in  \eqref{eq:F-defined} is a simple loop passing through the point at infinity (stereographic projection of a a closed Jordan curve passing through the north pole) which lies entirely in the right half-plane $\re(s)>\sep$ for some real $\sep$ and avoids the poles of $f$. Then the half-plane $\re(s)<\sep$ belongs to the exterior domain and the poles from $\Lambda_{+}$  belong to the interior domain. 
We now choose negative direction of integration with respect to the interior domain (this is motivated by the choice of contour in Definition~\ref{def_K_function}).
Writing $\Lambda_{+}^{\flat}$ for the subset of poles from  $\Lambda_{+}$ having the minimal real part denoted $\lambda_{\flat}$, we assume again that $\Lambda_{+}^{\flat}$ is finite, say $\#\Lambda_{+}^{\flat}=r$. The conditions (I) and (II) should be replaced by 

(I') either there exists a semi-strip $\{s: \im(s)>t_1, \sep<\re(s)<\sigma_{\flat}~\text{with}~\lambda_{\flat}<\sigma_{\flat}\}$ free from the points of $\gamma$  or there is a sequence  $\eta_k\to+\infty$ such that
$$
\lim\limits_{k\to\infty}|f(\sigma+i\eta_k)e^{\eta_k\beta}|=0
$$ 
uniformly in $\sigma\in[\sep,\sigma_{\flat}]$;

\smallskip

(II') either there exists a semi-strip $\{s: \im(s)<t_2, \sep<\re(s)<\sigma_{\flat}~\text{with}~\lambda_{\flat}<\sigma_{\flat}\}$ free from the points of $\gamma$  or there is a sequence  $\eta_k\to+\infty$ such that
$$
\lim\limits_{k\to\infty}|f(\sigma-i\eta_k)e^{-\eta_k\alpha}|=0
$$ 
uniformly in $\sigma\in[\sep,\sigma_{\flat}]$.

Writing $w=1/z$ and making the substitution $s\to -s$ in \eqref{eq:F-defined} we obtain 
$$
\tilde{F}(w):=F(1/w)=\int_{\tilde{\gamma}}f(-s)w^{-s}\d s
$$
where $\tilde{\gamma}=-\gamma$ is the reflection of $\gamma$ with respect to the origin.  Now the roles of the sets $\Lambda_{+}$ and $\Lambda_{-}$ are interchanged and the function $\tilde{F}(w)$ satisfies the conditions of Lemma~\ref{lm:F_zero_asymp} with opposite direction of integration, yielding
\begin{lemma}\label{lm:F_inf_asymp} 
	Denote the elements of $\Lambda_{+}^{\flat}$ by $q_1,q_2,\ldots,q_r$, $\re(q_j)=\lambda_{\flat}$, $j=1,\ldots,r$, and write $n_1,n_2,\ldots,n_r$ for their corresponding multiplicities. Under the above assumptions, the following asymptotic approximation holds as $z\to\infty$ in the sector $S_{\alpha,\beta}$, $\alpha\le\beta$,
	$$
	F(z)\sim -\bigg[\sum\limits_{k=1}^{r} z^{-q_k}\sum\limits_{i=0}^{n_k-1}B_{k,i}\ln^i(z)\bigg](1+O(|z|^{-\lambda_{\flat}-\delta}))
	$$
	for some $\delta>0$ and some complex constants $B_{k,i}$. The approximation is uniform in any closed subsector of the sector $S_{\alpha,\beta}$.
\end{lemma}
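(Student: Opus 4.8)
The plan is to reduce the statement to Lemma~\ref{lm:F_zero_asymp} by the change of variables $w=1/z$ together with the reflection $s\mapsto-s$, exactly as indicated in the paragraph preceding the lemma. First I would record the substitution carefully: writing $z=1/w$ gives $z^{-s}=w^{s}$, and then replacing $s$ by $-s$ turns the integral into $\tilde F(w):=F(1/w)=\int_{\tilde\gamma}f(-s)w^{-s}\,\d s$ with $\tilde\gamma=-\gamma$. I would keep scrupulous track of the orientation here, since the change $\d s\mapsto-\d s$ combined with the reflection of the loop is precisely the bookkeeping that will later produce the overall minus sign in the conclusion.

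Next I would verify that the function $g(s):=f(-s)$ and the contour $\tilde\gamma$ satisfy the hypotheses of Lemma~\ref{lm:F_zero_asymp}. Since $\gamma$ lies in the right half-plane $\re(s)>\sep$, the reflected loop $\tilde\gamma$ lies in the left half-plane $\re(s)<-\sep$, so that $-\sep$ plays the role of $\sep$. The interior poles of $\tilde\gamma$ are $-\Lambda_{+}$, and the subset of maximal real part is $-\Lambda_{+}^{\flat}=\{-q_1,\dots,-q_r\}$, with unchanged multiplicities $n_1,\dots,n_r$ and maximal real part $-\lambda_{\flat}$. I would then check that conditions (I') and (II') on $f$ translate into conditions (I) and (II) on $g$: because $g(\sigma+i\eta_k)=f(-\sigma-i\eta_k)$ samples $f$ in the lower half-plane, condition (I) for $g$ is exactly condition (II') for $f$, and symmetrically (II) for $g$ is (I') for $f$. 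Finally, as $z\to\infty$ in the sector $S_{\alpha,\beta}$ one has $w=1/z\to0$ in the reflected sector $S_{-\beta,-\alpha}$, so the asymptotic regime matches as well.

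With the hypotheses in place I would apply Lemma~\ref{lm:F_zero_asymp} to $\tilde F$, obtaining an expansion in the powers $w^{-(-q_k)}=w^{q_k}=z^{-q_k}$ and in powers of $\ln(w)$, valid as $w\to0$, with remainder $O(|w|^{\lambda_{\flat}+\delta})=O(|z|^{-\lambda_{\flat}-\delta})$. Translating back through $\ln(w)=-\ln(z)$ only multiplies each logarithmic coefficient by $(-1)^{i}$, which I absorb into the new constants $B_{k,i}$, while the orientation reversal noted above supplies the leading minus sign. This yields precisely the claimed expansion for $F(z)$, uniformly on closed subsectors of $S_{\alpha,\beta}$.

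I expect the only real obstacle to be the sign and orientation bookkeeping: correctly pairing (I')/(II') with (II)/(I) after the reflection, confirming that the reflected loop is traversed in the direction opposite to the convention of Lemma~\ref{lm:F_zero_asymp} (which is what makes the overall sign in the expansion negative rather than positive), and tracking the sector reflection $S_{\alpha,\beta}\mapsto S_{-\beta,-\alpha}$. None of these steps is deep, but each must be carried out consistently for the final formula to come out with the correct sign and error order.
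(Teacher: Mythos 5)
Your proposal is correct and follows essentially the same route as the paper: the paper's proof is exactly this reduction of Lemma~\ref{lm:F_inf_asymp} to Lemma~\ref{lm:F_zero_asymp} via $w=1/z$ and $s\mapsto-s$, with the reversed orientation of $\tilde\gamma=-\gamma$ accounting for the overall minus sign. Your write-up simply makes explicit the bookkeeping (pairing of (I$'$)/(II$'$) with (II)/(I), the sector reflection, and the error exponent) that the paper leaves implicit.
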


\bigskip

Next, we will apply Lemmas~\ref{lm:F_zero_asymp} and \ref{lm:F_inf_asymp} to $K$ function.  This leads to the following statements. 

\begin{theorem}\label{thm4}
	Let
	$$
	b_{\sharp}=\max_{m+1\le j\le q}(\re(b_j))
	$$
	and write $\Lambda_{-}^{\sharp}$ for the subset of elements of $\Lambda_{-}$ having the real part $\lambda_{\sharp}=b_{\sharp}-\tau-1$. Denote these elements by $p_1,p_2,\ldots,p_r$ and write $m_1,m_2,\ldots,m_r$ for their corresponding multiplicities. 
Suppose Table~\ref{tab:def} allows for the contour $\L_{\theta^{-},\theta^{+}}$ with $\theta^{+}\ge\pi/2$ and $\theta^{-}\le-\pi/2$  in \eqref{eq:def_K_function}. 
	Then there is $\delta>0$ such that as $z\to0$
	$$
	K_{p,q}^{m,n}\Big( 
	\begin{matrix}
		{\bf a} \\  {\bf b} 
	\end{matrix} \Big
	\vert z; \tau, \alpha
	\Big )= \bigg[\sum\limits_{k=1}^{r} z^{-p_k}\sum\limits_{i=0}^{n_k-1}A_{k,i}\ln^i(z)\bigg](1+O(|z|^{-\lambda_{\sharp}+\delta}))
	$$
	for some complex constants  $A_{k,i}$,	where $|\arg(z)|<-\pi\re(\nu)/(2\tau)$ in case (vi) and $\arg(z)=0$ in case (vii).	
\end{theorem}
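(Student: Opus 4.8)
The plan is to recognize the defining Mellin--Barnes integral \eqref{eq:def_K_function} as an instance of \eqref{eq:F-defined} and to invoke Lemma~\ref{lm:F_zero_asymp}. I would set $f(s) = \frac{1}{2\pi \i}\phi(s)$, with $\phi$ as in \eqref{def_phi}, so that the $K$-function equals $F(z) = \int_{\gamma} f(s) z^{-s}\,\d s$ with $\gamma = \L_{\theta^-,\theta^+}$. The poles of $f$ are exactly the points of $\Lambda^+ \cup \Lambda^-$ from \eqref{def:Lambda_pm}, and since $\gamma$ separates these two sets with $\Lambda^-$ on the interior side, the interior domain carries $\Lambda_- = \Lambda^-$ and the exterior carries $\Lambda_+ = \Lambda^+$. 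The first routine step is to locate the poles of maximal real part in $\Lambda^-$: as $\tau > 0$ and each element has the form $b_j - l\tau - k$ with $l,k \ge 1$, the real part $\re(b_j) - l\tau - k$ is largest exactly when $l = k = 1$ and $\re(b_j) = b_\sharp$, giving $\lambda_\sharp = b_\sharp - \tau - 1$ and $\Lambda_-^\sharp = \{ b_j - \tau - 1 : \re(b_j) = b_\sharp \}$, with multiplicities recording coincidences among the $b_j$ and any non-simple zeros of $G(\cdot;\tau)$. This is precisely the description in the statement.

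Next I would check the geometric and analytic hypotheses of the lemma. Because $\theta^+ \ge \pi/2$ and $\theta^- \le -\pi/2$, both rays of $\L_{\theta^-,\theta^+}$ stay in a left half-plane $\re(s) < \sep$, and on the Riemann sphere the contour closes into a simple Jordan loop through the point at infinity; the orientation ``leaving $\Lambda^-$ on the left'' coincides with the positive direction of the lemma. Absolute convergence of $\int_\gamma f(s) z^{-s}\,\d s$ over the $z$-sets named in the theorem --- the whole Riemann surface in cases (i)--(v), the sector $|\arg(z)| < -\pi\re(\nu)/(2\tau)$ in case (vi), and $\arg(z) = 0$ in case (vii) --- follows from the asymptotics of Proposition~\ref{prop:phi-asymp} exactly as already recorded in Remark~\ref{remark1} and in the proof of Theorem~\ref{prop_analyticity}, so I would cite those rather than repeat the estimates.

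The substantive part, and the step I expect to be the main obstacle, is verifying conditions (I) and (II). Whenever $\theta^+ > \pi/2$ the upper ray has $\re(s) \to -\infty$ along $\gamma$, so for large $\im(s)$ the contour leaves every vertical strip $\sigma_\sharp < \re(s) < \sep$ and the contour-free semi-strip alternative of (I) holds immediately; this covers case (vii), where $z \to 0$ forces $\theta^+ = \pi - \epsilon$, as well as any strictly oblique choice of ray in cases (i)--(v). The symmetric observation for $\theta^- < -\pi/2$ disposes of (II). The genuinely delicate situation is the borderline $\theta^+ = \pi/2$, which is forced in the linear case (vi): here the contour remains in the strip and one must establish the decay alternative $|f(\sigma + \i\eta_k)\,e^{\eta_k \beta}| \to 0$ uniformly for $\sigma \in [\sigma_\sharp,\sep]$, where $\beta$ is the upper angle of the admissible $z$-sector. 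I would obtain this from Proposition~\ref{prop:phi-asymp}: in cases (i)--(v) the integrand decays faster than any exponential as $\arg(s) \to \pi/2$, so the limit is automatic; in case (vi), formula \eqref{eq:phi_asymptotics_case3} with $\mu = 0$ yields $|\phi(\sigma + \i\eta)| = e^{\frac{\pi}{2\tau}\re(\nu)\eta + O(\ln\eta)}$, whence the product tends to zero precisely when $\beta < -\pi\re(\nu)/(2\tau)$, which holds on every closed subsector of the admissible sector. The analogous computation with the lower angle $\alpha$ handles (II). A point needing care here is to confirm that the sector of uniformity in Proposition~\ref{prop:phi-asymp} genuinely contains the rays $\arg(s) \to \pm\pi/2$ swept out as $\eta \to \infty$.

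With all hypotheses verified, Lemma~\ref{lm:F_zero_asymp} delivers the claimed expansion, the sum running over the poles $p_1,\dots,p_r$ of $\Lambda_-^\sharp$ with their multiplicities and the relative error $O(|z|^{-\lambda_\sharp + \delta})$ inherited from the lemma; the prefactor $\frac{1}{2\pi\i}$ is absorbed into the constants $A_{k,i}$, and the uniformity in closed subsectors is likewise inherited.
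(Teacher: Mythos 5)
Your proposal is correct and follows exactly the route the paper takes: the paper derives Theorem \ref{thm4} simply by applying Lemma \ref{lm:F_zero_asymp} to the integral \eqref{eq:def_K_function}, identifying $\Lambda_-=\Lambda^-$ and $\Lambda_-^{\sharp}$ as you do. Your verification of hypotheses (I)--(II) — the contour-free semi-strip when $\theta^+>\pi/2$, and the decay alternative via Proposition \ref{prop:phi-asymp} (in particular \eqref{eq:phi_asymptotics_case3} with $\mu=0$ for the borderline case (vi)) when $\theta^+=\pi/2$ — is sound and in fact more detailed than what the paper records.
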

\textbf{Remark.} If the pole at $s=p_k$ is simple, i.e. $m_k=1$, then we have
$$
A_{k,0}=\frac{\tau\prod\limits_{j=1}^m G(b_j-p_k;\tau)\prod\limits_{j=1}^n G(1+\tau-a_j+p_k;\tau)}
{ \sideset{}{'}\prod\limits_{j=m+1}^q G(1+\tau-b_j+p_k;\tau)\prod\limits_{j=n+1}^p G(a_j-p_k;\tau)}  
e^{\frac{\pi\alpha}{\tau}{p_k^2}}.
$$

In a similar fashion we get the following result (which can also be obtained from from Theorem \ref{thm4} by applying \eqref{eq:tranformation_K_z_1/z}).

\begin{theorem}\label{thm5}
	Let
	$$
	a_{\flat}=\min_{n+1\le j\le p}(\re(a_j))
	$$
	and write $\Lambda_{+}^{\flat}$ for the subset of elements of $\Lambda_{+}$ having the real part $a_{\flat}$. Denote these elements by $q_1,q_2,\ldots,q_r$ and write $n_1,n_2,\ldots,n_r$ for their corresponding multiplicities. 
	Suppose Table~\ref{tab:def} allows for the contour $\L_{\theta^{-},\theta^{+}}$ with $\theta^{+}\le\pi/2$ and $\theta^{-}\ge-\pi/2$. 
	Then there is $\delta>0$ such that as $z\to\infty$
	$$
	K_{p,q}^{m,n}\Big( 
	\begin{matrix}
		{\bf a} \\  {\bf b} 
	\end{matrix} \Big
	\vert z; \tau, \alpha
	\Big )= -\bigg[\sum\limits_{k=1}^{r} z^{-q_k}\sum\limits_{i=0}^{m_k-1}B_{k,i}\ln^i(z)\bigg](1+O(|z|^{-a_{\flat}-\delta}))
	$$
	for some complex constants  $B_{k,i}$,	where $|\arg(z)|<-\pi\re(\nu)/(2\tau)$ in case (vi) and $\arg(z)=0$ in case (vii).		
\end{theorem}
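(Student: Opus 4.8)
The plan is to deduce the $z\to\infty$ expansion from the $z\to0$ expansion already obtained in Theorem~\ref{thm4}, using the inversion identity \eqref{eq:tranformation_K_z_1/z}. Writing $w=1/z$, that identity gives
\begin{equation*}
K_{p,q}^{m,n}\Big(\begin{matrix} {\bf a} \\ {\bf b} \end{matrix}\Big\vert z;\tau,\alpha\Big)=K_{q,p}^{n,m}\Big(\begin{matrix} 1+\tau-{\bf b} \\ 1+\tau-{\bf a} \end{matrix}\Big\vert w;\tau,\alpha\Big),
\end{equation*}
so that $z\to\infty$ becomes $w\to0$ and it suffices to apply Theorem~\ref{thm4} to the right-hand side, which has reflected parameter vectors $\tilde{\bf a}=1+\tau-{\bf b}$, $\tilde{\bf b}=1+\tau-{\bf a}$ and indices $(\tilde m,\tilde n,\tilde p,\tilde q)=(n,m,q,p)$. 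The advantage of this route is that the delicate contour analysis is already packaged inside Theorem~\ref{thm4}, so I only have to track how the data transform.

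First I would identify the interior pole set of the transformed integrand. Substituting the reflected parameters into \eqref{def:Lambda_pm} shows $\tilde\Lambda^-=-\Lambda^+$: the points $\tilde b_j-l\tau-k$ with $n+1\le j\le p$ and $l,k\ge1$ are exactly the points $-(a_j+l'\tau+k')$ with $l',k'\ge0$. Consequently $\tilde b_\sharp=\max_{\tilde m+1\le j\le\tilde q}\re(\tilde b_j)=1+\tau-a_\flat$, so the critical abscissa from Theorem~\ref{thm4} is $\tilde\lambda_\sharp=\tilde b_\sharp-\tau-1=-a_\flat$, and under $s\mapsto-s$ the dominant poles $\tilde p_k$ of $\tilde\Lambda^-$ with $\re(\tilde p_k)=-a_\flat$ correspond one-to-one, with matching multiplicities, to the elements $q_k$ of $\Lambda_+^\flat$ via $\tilde p_k=-q_k$. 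I also need the case label and admissibility to be preserved; this follows because \eqref{eq:parameters} transforms as $\tilde N=N$, $\tilde\mu=(1+\tau)N-\mu$, $\tilde\nu=(1+\tau)(q-p)+\nu$ and $\tilde p-\tilde q=-(p-q)$, which maps the seven cases among themselves (for instance (iii) and (iv) are interchanged while (i), (ii), (v), (vi), (vii) are fixed).

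The next step is to match the admissible contours. Reflecting a contour $\L_{\theta^-,\theta^+}$ through the origin sends its upper ray at angle $\theta^+$ to a lower ray at angle $\theta^+-\pi$ and its lower ray at angle $\theta^-$ to an upper ray at angle $\theta^-+\pi$, so the transformed function is integrated along $\L_{\tilde\theta^-,\tilde\theta^+}$ with $\tilde\theta^+=\theta^-+\pi$ and $\tilde\theta^-=\theta^+-\pi$. Hence the hypothesis $\theta^+\le\pi/2$, $\theta^-\ge-\pi/2$ of the present theorem is exactly equivalent to $\tilde\theta^+\ge\pi/2$, $\tilde\theta^-\le-\pi/2$, which is precisely what Theorem~\ref{thm4} requires. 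Applying Theorem~\ref{thm4} then gives, as $w\to0$, the expansion $\big[\sum_k w^{-\tilde p_k}\sum_i\tilde A_{k,i}\ln^i(w)\big](1+O(|w|^{-\tilde\lambda_\sharp+\delta}))$. Substituting back $w=1/z$, so that $w^{-\tilde p_k}=z^{-q_k}$, $\ln^i(w)=(-1)^i\ln^i(z)$ and $|w|^{-\tilde\lambda_\sharp+\delta}=|z|^{a_\flat+\delta}=|z|^{-a_\flat-\delta}\cdot|z|^{2a_\flat}$ (more simply, the error is $O(|z|^{-a_\flat-\delta})$ since $-\tilde\lambda_\sharp+\delta=a_\flat+\delta$), and absorbing the signs into fresh constants $B_{k,i}$, reproduces the claimed asymptotics with the leading minus sign.

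I expect the main obstacle to be the case-by-case bookkeeping needed to confirm that the reflection preserves the classification of Table~\ref{tab:def} and that an admissible contour with $\tilde\theta^\pm$ in the required ranges genuinely exists for the transformed problem; everything else is routine substitution. As an independent route, one can instead prove the statement directly from Lemma~\ref{lm:F_inf_asymp} applied to $\phi(s)=\G_{p,q}^{m,n}(s)e^{\frac{\pi\alpha}{\tau}s^2}$ from \eqref{def_phi}, mirroring exactly the passage from Lemma~\ref{lm:F_zero_asymp} to Theorem~\ref{thm4}; there the work shifts to verifying the decay conditions (I$'$) and (II$'$) and the absolute convergence of the Mellin--Barnes integral directly from the estimates of Proposition~\ref{prop:phi-asymp}, with the overall minus sign arising from the negative orientation of the contour relative to the interior domain.
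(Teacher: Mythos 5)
Your proposal is correct and matches the paper's own treatment: the authors prove Theorem~\ref{thm4} via Lemma~\ref{lm:F_zero_asymp} and then state Theorem~\ref{thm5} as following ``in a similar fashion'' from Lemma~\ref{lm:F_inf_asymp}, explicitly noting it can also be obtained from Theorem~\ref{thm4} by applying \eqref{eq:tranformation_K_z_1/z} --- which is exactly the route you carry out in detail (and your alternative route is the paper's primary one). The only blemish is the typo $|w|^{-\tilde\lambda_\sharp+\delta}=|z|^{a_\flat+\delta}$, which should read $|z|^{-a_\flat-\delta}$, as your final conclusion correctly states.
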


\noindent
\textbf{Remark.} If the pole at $s=q_k$ is simple, i.e. $n_k=1$, then we have
$$
B_{k,0}=\frac{\tau\prod\limits_{j=1}^m G(b_j-q_k;\tau)\prod\limits_{j=1}^n G(1+\tau-a_j+q_k;\tau)}
{ \prod\limits_{j=m+1}^q G(1+\tau-b_j+q_k;\tau)\sideset{}{'}\prod\limits_{j=n+1}^p G(a_j-q_k;\tau)}  
e^{\frac{\pi\alpha}{\tau}{q_k^2}}.
$$

\section{The Mellin transform and integral equations satisfied by $K$ function}\label{section_Mellin_K_function}

We open this section with the conditions for existence of the Mellin transform of $K$ function. The standard Mellin inversion theorem \cite[Theorem~28]{Titchmarsh} immediately yields. 
\begin{theorem}
	Suppose	$\max\limits_{s\in\Lambda_{-}}\re(s)<\min\limits_{s\in\Lambda_{+}}\re(s)$ which amounts to 
	\begin{equation}\label{eq:KMellinStrip}
	\lambda_{\sharp}:=\max_{m+1\le j\le q}(\re(b_j))-\tau-1<\lambda_{\flat}:=\min_{n+1\le j\le p}(\re(a_j))
	\end{equation}
	and assume that conditions listed in Remark~\ref{remark1} hold true, so that $\gamma=\L_{\i\infty}$ in \eqref{eq:def_K_function}. Then the Mellin transform of $K$ function exists and 
	$$
	\int_0^{\infty}K_{p,q}^{m,n}\Big( 
	\begin{matrix}
		{\bf a} \\  {\bf b} 
	\end{matrix} \Big
	\vert x; \tau, \alpha
	\Big )x^{s-1}\d x=e^{\frac{\pi \alpha}{\tau}{s^2}}\G_{p,q}^{m,n}\Big(\begin{matrix} {\bf a} \\
		{\bf b} \end{matrix}
	\Big\vert s; \tau\Big)
	$$
	for $s$ in the strip $\lambda_{\sharp}<\re(s)<\lambda_{\flat}$.
\end{theorem}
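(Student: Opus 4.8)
The plan is to read the defining integral \eqref{eq:def_K_function} as a genuine inverse Mellin transform along a vertical line and then invert it via the Mellin inversion theorem. First I would observe that the hypothesis \eqref{eq:KMellinStrip}, namely $\lambda_{\sharp}<\lambda_{\flat}$, guarantees that the vertical strip $\lambda_{\sharp}<\re(s)<\lambda_{\flat}$ contains none of the poles of
$\phi(s)=\G_{p,q}^{m,n}(s;\tau)e^{\pi\alpha s^2/\tau}$ defined in \eqref{def_phi}: these poles all lie in $\Lambda^+\cup\Lambda^-$, and by \eqref{def:Lambda_pm} the set $\Lambda^-$ lies in $\{\re(s)\le\lambda_{\sharp}\}$ while $\Lambda^+$ lies in $\{\re(s)\ge\lambda_{\flat}\}$. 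Hence $\phi$ is analytic in this strip. Because the parameters satisfy the conditions of Remark~\ref{remark1}, the contour in \eqref{eq:def_K_function} may be taken to be $\gamma=\L_{\i\infty}$, and since the strip is pole-free I may place it on any vertical line $\re(s)=c$ with $\lambda_{\sharp}<c<\lambda_{\flat}$.

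Next I would establish the decay needed for the inversion. Invoking Proposition~\ref{prop:phi-asymp} together with the factor $e^{\pi\alpha s^2/\tau}$, one checks over the admissible ranges listed in Remark~\ref{remark1} that $t\mapsto\phi(c+\i t)$ belongs to $L^1(\r)$ and tends to $0$ uniformly for $c$ in compact subsets of $(\lambda_{\sharp},\lambda_{\flat})$ as $|t|\to\infty$: for $N>0$ the decay is super-exponential, for $N=0$ with $\re(\alpha)>0$ the factor $e^{\pi\alpha s^2/\tau}$ produces Gaussian decay, and in the remaining cases the conditions $\re(\nu)<0$ and $\re(\xi)<-2\tau$ are precisely what force integrability. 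Complementarily, Theorems~\ref{thm4} and \ref{thm5} give $K(x)=O(x^{-\lambda_{\sharp}+\delta})$ as $x\to0$ and $K(x)=O(x^{-\lambda_{\flat}-\delta})$ as $x\to\infty$, which shows that the forward Mellin integral $\int_0^{\infty}K(x)x^{s-1}\,\d x$ converges absolutely precisely on the strip $\lambda_{\sharp}<\re(s)<\lambda_{\flat}$, matching the asserted domain.

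With these ingredients the identity follows from \cite[Theorem~28]{Titchmarsh}. Concretely, writing $x=e^{-u}$ and $s=c+\i t$ turns \eqref{eq:def_K_function} into
\begin{equation*}
e^{-cu}K_{p,q}^{m,n}\Big(\begin{matrix}{\bf a}\\ {\bf b}\end{matrix}\Big\vert e^{-u};\tau,\alpha\Big)=\frac{1}{2\pi}\int_{-\infty}^{\infty}\phi(c+\i t)e^{\i t u}\,\d t,
\end{equation*}
exhibiting $e^{-cu}K(e^{-u})$ as the inverse Fourier transform of the $L^1$ function $t\mapsto\phi(c+\i t)$; the growth bounds above show $e^{-cu}K(e^{-u})\in L^1(\r)$ as well, so Fourier inversion recovers $\phi(c+\i t)$ as the transform of $e^{-cu}K(e^{-u})$. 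Returning to the variable $x$ gives exactly $\int_0^{\infty}K(x)x^{s-1}\,\d x=e^{\pi\alpha s^2/\tau}\G_{p,q}^{m,n}(s;\tau)$ throughout the strip.

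I expect the main obstacle to be the case-by-case integrability verification of the second step. The super-exponential and Gaussian cases are immediate, but the linear and logarithmic cases (vi)--(vii), where the decay is only exponential or algebraic, require combining the sharp asymptotics of Proposition~\ref{prop:phi-asymp} with the sign conditions on $\re(\nu)$ and $\re(\xi)$ and, in case (vi), with the restriction on $\arg(z)$, in order to secure simultaneously the $L^1$ bound and the uniformity in $c$ that the inversion theorem demands.
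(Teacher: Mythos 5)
Your proposal is correct and takes essentially the same route as the paper, whose entire proof is the single remark that the statement follows immediately from the Mellin inversion theorem \cite[Theorem~28]{Titchmarsh} applied on the vertical line $\gamma=\L_{\i\infty}$ inside the pole-free strip $\lambda_{\sharp}<\re(s)<\lambda_{\flat}$. Your case-by-case verification of the $L^1$ and uniform-decay hypotheses via Proposition~\ref{prop:phi-asymp} and Remark~\ref{remark1} simply makes explicit what the paper leaves to the reader.
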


Define two auxiliary functions in terms Meijer's $G$ function 
\begin{equation}\label{eq:psi-defined}
f(t):=\tau t^{\tau+1}G^{n,p-n}_{p+q-m-n,m+n}\Bigg( 
t^{\tau}\Big \vert
\begin{matrix}
	-a_{n+1}/\tau,\ldots,-a_{p}/\tau,-b_{m+1}/\tau,\ldots,-b_{q}/\tau\\
	-a_1/\tau,\ldots,-a_{n}/\tau,-b_1/\tau,\ldots,-b_{m}/\tau
\end{matrix} \Bigg),
\end{equation}
\begin{equation}\label{eq:phi-defined}
h(t):=t^{\tau+1}G^{n,p-n}_{p+q-m-n,m+n}\Big( 
t\Big \vert
\begin{matrix}
	-a_{n+1},\ldots,-a_{p},-b_{m+1},\ldots,-b_{q}\\
	-a_1,\ldots,-a_{n},-b_1,\ldots,-b_{m}
\end{matrix}  \Big).
\end{equation}
Using quasi-periodicity of Barnes $G$ function from \eqref{eq:funct_rel_G}  we arrive at
\begin{theorem}\label{th:firstinteq}
Suppose $p\ge{q}$ and the vertical strips defined by conditions \eqref{eq:KMellinStrip} and 
\begin{equation}\label{eq:GMellinStrip1}
\max\limits_{1\le{i}\le{n}}\re(a_i)-\tau-1<\re(s)<\min\limits_{n+1\le{i}\le{p}}\re(a_i)-1
\end{equation}
\emph{(}with additional restriction  $\re(s)<(\re(\mu)-\tau)/N-\tau/2-1$ if $p=q$\emph{)}
have nonempty intersection.
Then $K$ function satisfies the  integral equation
\begin{equation}\label{eq:firstinteq}
e^{-\frac{\pi\alpha}{\tau}}xK_{p,q}^{m,n}\Big(\begin{matrix}
		{\bf a} \\  {\bf b}\end{matrix} \Big\vert x; \tau, \alpha\Big)=
	\int_{0}^{\infty}f\big(e^{-\frac{2\pi\alpha}{\tau}}x/t\big)
	K_{p,q}^{m,n}\Big(\begin{matrix}
		{\bf a} \\  {\bf b}\end{matrix} \Big\vert t; \tau, \alpha\Big)\frac{\d t}{t}.    
\end{equation}
In a similar fashion, if the vertical strips defined by conditions \eqref{eq:KMellinStrip} and 
\begin{equation}\label{eq:GMellinStrip2}
\max\limits_{1\le{i}\le{n}}\re(a_i)-\tau-1<\re(s)<\min\limits_{n+1\le{i}\le{p}}\re(a_i)-\tau
\end{equation}
\emph{(}with additional restriction $\re(s)<(\re(\mu)-1)/N-\tau-1/2$ if $p=q$\emph{)} have nonempty intersection, then 
\begin{equation}\label{eq:secondinteq}
	(2\pi)^{(\tau-1)(q-p)/2}\tau^{N(\tau+1/2)-\mu}e^{-\pi\alpha\tau}x^{\tau}K_{p,q}^{m,n}\Big(\begin{matrix}
		{\bf a} \\  {\bf b}\end{matrix} \Big\vert x; \tau, \alpha\Big)=
	\int_{0}^{\infty}h\big(\tau^{N}e^{-2\pi\alpha}x/t\big)
	K_{p,q}^{m,n}\Big(\begin{matrix}
		{\bf a} \\  {\bf b}\end{matrix} \Big\vert t; \tau, \alpha\Big)\frac{\d t}{t}.
\end{equation}
\end{theorem}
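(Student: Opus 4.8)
The plan is to read each of \eqref{eq:firstinteq} and \eqref{eq:secondinteq} as an identity about a multiplicative (Mellin) convolution and to reduce it to a functional equation for $\G_{p,q}^{m,n}$. Throughout write $\mathcal{M}[g](s)=\int_0^\infty g(x)x^{s-1}\d x$. I will lean on three facts: the Mellin convolution theorem, $\mathcal{M}\big[\int_0^\infty g_1(\cdot/t)g_2(t)\,\d t/t\big](s)=\mathcal{M}[g_1](s)\,\mathcal{M}[g_2](s)$, valid on any vertical line in the common domain of the two transforms on which the convolution converges absolutely; the rescaling rule $\mathcal{M}[g(c\,\cdot)](s)=c^{-s}\mathcal{M}[g](s)$; and the Mellin transform of $K$ from the theorem proved just above, $\mathcal{M}[K](s)=e^{\frac{\pi\alpha}{\tau}s^2}\G_{p,q}^{m,n}(s)$ on $\lambda_\sharp<\re(s)<\lambda_\flat$. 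With these, both equations follow once I establish the two identities $\mathcal{M}[f](s)=\G_{p,q}^{m,n}(s+1)/\G_{p,q}^{m,n}(s)$ and $\mathcal{M}[h](s)=(2\pi)^{(\tau-1)(q-p)/2}\tau^{N(\tau+1/2)-\mu}\,\tau^{Ns}\,\G_{p,q}^{m,n}(s+\tau)/\G_{p,q}^{m,n}(s)$, and then match transforms and invert.

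First I would compute $\mathcal{M}[f]$ and $\mathcal{M}[h]$. The standard Mellin transform of Meijer's $G$ function expresses $\mathcal{M}[G]$ as the product/ratio of ordinary $\Gamma$ functions obtained from the Mellin--Barnes integrand in \eqref{def:Meijer_G} with $s$ replaced by $-s$. Applying the substitution $w=t^\tau$ in \eqref{eq:psi-defined} gives $\mathcal{M}[f](s)=\mathcal{M}[G]\big((s+\tau+1)/\tau\big)$, and the trivial substitution $w=t$ in \eqref{eq:phi-defined} gives $\mathcal{M}[h](s)=\mathcal{M}[G](s+\tau+1)$; each evaluates to an explicit quotient of $\Gamma$ factors with arguments such as $(1+\tau-a_j+s)/\tau$ and $(a_j-s-1)/\tau$ for $f$, or $1+\tau-a_j+s$ and $a_j-s-\tau$ for $h$. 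The fundamental strips of these transforms are precisely \eqref{eq:GMellinStrip1} and \eqref{eq:GMellinStrip2}, which is why those strips enter the hypotheses. Independently I would collapse $\G_{p,q}^{m,n}(s+1)/\G_{p,q}^{m,n}(s)$ and $\G_{p,q}^{m,n}(s+\tau)/\G_{p,q}^{m,n}(s)$ factor by factor directly from \eqref{eq:integrand}, using the quasi-periodicity \eqref{eq:funct_rel_G}: the relation $G(z+1;\tau)=\Gamma(z/\tau)G(z;\tau)$ turns each shifted factor into a single $\Gamma(\cdot/\tau)$ and reproduces $\mathcal{M}[f]$ on the nose, while $G(z+\tau;\tau)=(2\pi)^{(\tau-1)/2}\tau^{1/2-z}\Gamma(z)G(z;\tau)$ does the same for the $\tau$-shift while shedding powers of $2\pi$ and $\tau$. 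Collecting those powers is the only bookkeeping step: the $2\pi$-exponents sum to $(\tau-1)(p-q)/2=-(\tau-1)(q-p)/2$, and the $\tau$-power relating $\G_{p,q}^{m,n}(s+\tau)/\G_{p,q}^{m,n}(s)$ to $\mathcal{M}[h]$ is $\tau^{-N(\tau+1/2)-Ns+\mu}$, where the part linear in $a_j,b_j$ assembles exactly into $\mu$ from \eqref{eq:parameters}, giving the prefactor in \eqref{eq:secondinteq}.

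With both identities in hand the assembly is automatic. For \eqref{eq:firstinteq}, applying the convolution theorem to $f(e^{-2\pi\alpha/\tau}\,\cdot)$ and $K$ and using the rescaling rule, the Mellin transform of the right-hand side is $e^{2\pi\alpha s/\tau}\,\mathcal{M}[f](s)\,e^{\frac{\pi\alpha}{\tau}s^2}\G_{p,q}^{m,n}(s)=e^{\frac{\pi\alpha}{\tau}(s^2+2s)}\G_{p,q}^{m,n}(s+1)$, which equals $\mathcal{M}\big[e^{-\pi\alpha/\tau}xK(x)\big](s)=e^{-\pi\alpha/\tau}\mathcal{M}[K](s+1)$; Mellin inversion (uniqueness of the transform) then yields \eqref{eq:firstinteq}. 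The same computation with the shift by $\tau$, the rescaling $\tau^{N}e^{-2\pi\alpha}$ and the prefactor above turns $\mathcal{M}[h](s)\,\mathcal{M}[K](s)$ into $\mathcal{M}[\text{LHS}](s)=(2\pi)^{(\tau-1)(q-p)/2}\tau^{N(\tau+1/2)-\mu}e^{-\pi\alpha\tau}\mathcal{M}[K](s+\tau)$ and proves \eqref{eq:secondinteq}.

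The real work is analytic, not algebraic. I must verify that the Mellin convolution theorem actually applies: that the strips \eqref{eq:KMellinStrip} and \eqref{eq:GMellinStrip1} (resp.\ \eqref{eq:GMellinStrip2}) overlap --- this is the nonempty-intersection hypothesis, which (together with $p\ge q$) lets me pick a common line inside the intersection on which, by Remark \ref{remark1}, $\gamma=\L_{\i\infty}$ is admissible --- and that on that line the double integral converges absolutely so that Fubini is legitimate. The genuinely delicate regime is $p=q$: there the Meijer $G$ function defining $f$ (resp.\ $h$) has $p'-q'=-N$, so at one endpoint its decay is only algebraic and the bare fundamental strip does not guarantee absolute convergence of the convolution; the supplementary conditions $\re(s)<(\re(\mu)-\tau)/N-\tau/2-1$ and $\re(s)<(\re(\mu)-1)/N-\tau-1/2$ (which presuppose $N\neq0$) are exactly the sharpenings of the upper strip boundary that restore it. I would establish this convergence from the endpoint asymptotics of $K$ in Theorems \ref{thm4}--\ref{thm5} together with the classical behaviour of the Meijer $G$ function at $0$ and $\infty$, and this is the step I expect to require the most care.
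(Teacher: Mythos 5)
Your proposal is correct and follows essentially the same route as the paper: both rest on the quasi-periodicity \eqref{eq:funct_rel_G} to produce the functional equations $\G_{p,q}^{m,n}(s+1)=F(s)\G_{p,q}^{m,n}(s)$ and $\G_{p,q}^{m,n}(s+\tau)=(2\pi)^{(\tau-1)(p-q)/2}\tau^{\mu-N(\tau+1/2)}\tau^{-sN}H(s)\G_{p,q}^{m,n}(s)$, identify $F$ and $H$ as the Mellin transforms of the Meijer $G$ functions $f$ and $h$, and conclude by the Mellin convolution theorem on a common line inside the intersecting strips. The only cosmetic differences are that the paper shifts the integration variable in the Mellin--Barnes integral rather than invoking Mellin inversion, and it cites Kilbas--Saigo for the convergence conditions where you propose checking them from endpoint asymptotics.
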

\begin{proof}
The first quasi-periodicity of Barnes $G$ function in  \eqref{eq:funct_rel_G} leads to the following relation
$$
\G_{p,q}^{m,n}(s+1)=\underbrace{\frac{\prod_{j=1}^{n}\Gamma\Big(1+1/\tau-a_j/\tau+s/\tau\Big)\prod_{j=n+1}^{p}\Gamma\Big(a_j/\tau-s/\tau-1/\tau\Big)}{\prod_{j=1}^{m}\Gamma\Big(b_j/\tau-s/\tau-1/\tau\Big)\prod_{j=m+1}^{q}\Gamma\Big(1+1/\tau-b_j/\tau+s/\tau\Big)}}_{=F(s)}\G_{p,q}^{m,n}(s).
$$
Basic properties of the Mellin transform and  Meijer's $G$ function show that
$\mathcal{M}[f](s)=F(s)$ with $f$ defined in \eqref{eq:psi-defined}.  Suppose now that $\lambda_{\flat}-\lambda_{\sharp}>1$ and the integral over $\gamma=\L_{\i\infty}$ converges. Then, we can write by changing the integration variable $s\to{s+1}$ in \eqref{eq:def_K_function}
\begin{multline*}
	K_{p,q}^{m,n}\Big(\begin{matrix}
		{\bf a} \\  {\bf b}\end{matrix} \Big\vert z; \tau, \alpha\Big)
	=\frac{e^{\frac{\pi\alpha}{\tau}}}{2\pi\i{z}} \int\limits_{\gamma-1} 
	\G_{p,q}^{m,n}\Big(\begin{matrix}{\bf a}\\{\bf b} \end{matrix}
	\Big\vert s+1; \tau\Big)e^{\frac{\pi \alpha}{\tau}{s^2}}\big(e^{-\frac{2\pi\alpha}{\tau}}z\big)^{-s}\d s
	\\
	=\frac{e^{\frac{\pi\alpha}{\tau}}}{2\pi\i{z}} \int\limits_{\gamma-1} F(s)
	\G_{p,q}^{m,n}\Big(\begin{matrix}{\bf a}\\{\bf b} \end{matrix}\Big\vert s; \tau\Big)
	e^{\frac{\pi \alpha}{\tau}{s^2}}\big(e^{-\frac{2\pi\alpha}{\tau}}z\big)^{-s}\d s.
\end{multline*}
The  Mellin convolution theorem \cite[Theorem~44]{Titchmarsh} asserts that
$$
M\Bigg[\int_0^{\infty}f\Big(\frac{x}{t}\Big)K(t)\frac{\d t}{t}\Bigg](s)=M[f](s)M[K](s).
$$
According to \cite[Theorems~2.2 and 3.3]{KilbasSaigo} the  Mellin transform above exists under conditions \eqref{eq:GMellinStrip1} (with the additional restriction $\re(s)<(\re(\mu)-\tau)/N-\tau/2-1$ when $p=q$).
Hence, by the Mellin convolution theorem  we obtain \eqref{eq:firstinteq}. 

The second  quasi-periodicity of Barnes $G$ function in  \eqref{eq:funct_rel_G} leads to the relation
$$
\G_{p,q}^{m,n}(s+\tau)=(2\pi)^{(\tau-1)(p-q)/2}\tau^{\mu-N(\tau+1/2)}\tau^{-sN}\underbrace{\frac{\prod_{j=1}^{n}\Gamma\Big(1+\tau-a_j+s\Big)\prod_{j=n+1}^{p}\Gamma\Big(a_j-\tau-s\Big)}{\prod_{j=1}^{m}\Gamma\Big(b_j-\tau-s\Big)\prod_{j=m+1}^{q}\Gamma\Big(1+\tau-b_j+s\Big)}}_{=H(s)}\G_{p,q}^{m,n}(s),
$$
where $\mu$ and $N$ are defined in \eqref{eq:parameters}. Basic properties of the Mellin transform and  Meijer's $G$ function show that
$\mathcal{M}[h](s)=H(s)$ with $h$ defined in \eqref{eq:phi-defined}. Suppose now that $\lambda_{\flat}-\lambda_{\sharp}>\tau$ and the integral over $\gamma=\L_{\i\infty}$ converges.
Then we can write by changing the integration variable $s\to{s+\tau}$ in \eqref{eq:def_K_function}
\begin{multline*}
	K_{p,q}^{m,n}\Big(\begin{matrix}
		{\bf a} \\  {\bf b}\end{matrix} \Big\vert z; \tau, \alpha\Big)
	=\frac{e^{\pi\alpha\tau}}{2\pi\i{z^{\tau}}}\int\limits_{\gamma-\tau} 
	\G_{p,q}^{m,n}\Big(\begin{matrix}{\bf a}\\{\bf b} \end{matrix}
	\Big\vert s+\tau; \tau\Big)e^{\frac{\pi \alpha}{\tau}{s^2}}\big(e^{-2\pi\alpha}z\big)^{-s}\d s
	\\
	=(2\pi)^{(\tau-1)(p-q)/2}\tau^{\mu-N(\tau+1/2)}\frac{e^{\pi\alpha\tau}}{2\pi\i{z^{\tau}}}\int\limits_{\gamma-\tau} 
	H(s)
	\G_{p,q}^{m,n}\Big(\begin{matrix}{\bf a}\\{\bf b} \end{matrix}
	\Big\vert s; \tau\Big)e^{\frac{\pi \alpha}{\tau}{s^2}}\big(\tau^{N}e^{-2\pi\alpha}z\big)^{-s}\d s.
\end{multline*}
According to \cite[Theorems~2.2 and 3.3]{KilbasSaigo} the  Mellin transform above exists under condition
\eqref{eq:GMellinStrip2} (with the additional restriction $\re(s)<(\re(\mu)-1)/N-\tau-1/2$ when $p=q$).
Hence,  
by the Mellin convolution theorem we obtain \eqref{eq:secondinteq}.
\end{proof}

\textbf{Alternative proof of \eqref{eq:secondinteq}}.  Applying \eqref{eq:transformation_K_z_power_tau}  to $K$ functions on both sides \eqref{eq:firstinteq} we get 
 	\begin{equation*}
	e^{-\frac{\pi\alpha}{\tau}}xK_{p,q}^{m,n}\Big(\begin{matrix}
		\tau^{-1} {\bf a} \\ \tau^{-1} {\bf b}\end{matrix} \Big\vert B x^{\tau}; \tau^{-1}, \tilde \alpha\Big)=
	\int_{0}^{\infty}f\big(e^{-2\pi\alpha/\tau}x/t\big)
	K_{p,q}^{m,n}\Big(\begin{matrix}
		\tau^{-1} {\bf a} \\ \tau^{-1} {\bf b}\end{matrix} \Big\vert e^{-2\pi\alpha}B t^{\tau}; \tau^{-1}, \tilde \alpha\Big)\frac{\d t}{t}.
	\end{equation*}
	Changing the variables $B x^{\tau}=y$ and $Bt^{\tau}= u$ gives us
	 	\begin{equation*}
	e^{-\frac{\pi\alpha}{\tau}} (By)^{1/\tau} K_{p,q}^{m,n}\Big(\begin{matrix}
		\tau^{-1} {\bf a} \\ \tau^{-1} {\bf b}\end{matrix} \Big\vert y; \tau^{-1}, \tilde \alpha\Big)=
	\frac{1}{\tau}\int_{0}^{\infty}f\big(e^{-2\pi\alpha/\tau}  (y/u)^{1/\tau}\big)
	K_{p,q}^{m,n}\Big(\begin{matrix}
		\tau^{-1} {\bf a} \\ \tau^{-1} {\bf b}\end{matrix} \Big\vert \tau^{-N} e^{-2\pi \tilde \alpha}u; \tau^{-1}, \tilde \alpha\Big)\frac{\d u}{u}.
	\end{equation*}
	After rescaling parameters $\tau^{-1} {\mathbf a}\mapsto {\mathbf a}$, 
	$\tau^{-1} {\mathbf b}\mapsto {\mathbf b}$, 
	$\tau^{-1} \mapsto \tau$ and $\tilde \alpha \mapsto \alpha$ we arrive to \eqref{eq:secondinteq}.

\section{Examples of $K$ function in applications}\label{section_examples}

Next we present several examples where $K$-functions appear in applications. All examples in this section will be stated in terms of the function $K_{p,q}^{m,n}\Big( 
\begin{matrix}
{\bf a} \\  {\bf b} 
\end{matrix} \Big
\vert z; \tau
\Big )$ (for which $\alpha=0$), therefore in this section $\alpha$ will be used in different meaning.  

\subsection{Extrema of stable processes}

Let $X=\{X_t\}_{t\ge 0}$ be an $\alpha$-stable L\'evy process started from $X_0=0$, see \cite{kyprianou_pardo_2022}. 
Stable processes are parameterized by a pair $(\alpha,\rho)$, where $\alpha \in (0,2)$ is called the stability parameter (we emphasize again that this $\alpha$ plays different role than $\alpha$ in \eqref{def_phi}). The parameter $\rho:={\mathbb P}(X_1>0)$ is called the positivity parameter and it satisfies the inequalities $\rho \in (0,1)$  and $\alpha-1<\alpha \rho <1$. 
Stable distributions are closely related to gamma functions via Mellin transform. For example, it is well known (see \cite[Theorem 1.13]{kyprianou_pardo_2022})  that 
$$
{\mathbb E}[(X_1)^{z}  {\mathbf 1}_{\{X_1>0\}}]=\frac{1}{\pi} \sin(\pi \rho z) 
\Gamma(z) \Gamma(1-z/\alpha), \;\;\; -1<\re(z)<\alpha. 
$$
Let us define the running supremum of the process $X$ as follows
$$
S_t:=\sup\limits_{0\le s \le t} X_s. 
$$
It was shown in \cite{Kuznetsov2011} that the Mellin transform of the random variable $S_1$  is given by 
\begin{equation}
\e[(S_1)^{z}]=\alpha^{z} \frac{G(\alpha\rho;\alpha)}{G(\alpha(1-\rho)+1;\alpha)}
\times \frac{G(\alpha(1-\rho)+1-z;\alpha)}
{G(\alpha\rho+z;\alpha)}
\times \frac{G(\alpha+z;\alpha)}{G(\alpha-z;\alpha)}, 
\end{equation}
where $-\alpha \rho < \re(z) < \alpha$. 
Writing the probability density function $p_{S_1}(x)$ (of the random variable $S_1$) via the inverse Mellin transform and applying \eqref{eq:transformation_K_shift_by_c} we obtain the following representation (this corresponds to $\tau=\alpha$ and $\alpha=0$ in \eqref{eq:def_K_function})
\begin{equation}
p_{S_1}(x)=
 \frac{G(\alpha\rho;\alpha)}{G(\alpha(1-\rho)+1;\alpha)} x^{-1}
K_{2,2}^{1,1}\Big( 
\begin{matrix}
1, \alpha \\ 1+\alpha(1-\rho), 1+\alpha (1- \rho)
\end{matrix} \Big  \vert x/\alpha; \alpha
\Big ).
\end{equation}
We compute $N=p-q=0$, $\mu=1-\alpha$ and $\nu=\alpha(2\rho-1)-1$. If $\alpha \neq 1$ the parameters correspond to a balanced case and if $\alpha=1$ then necessarily 
$\nu=2\rho-2<0$ and this becomes a linear case. 
When $\alpha$ is irrational, this function can be expanded as a convergent series of powers of $x$, but the convergence of this series is very delicate and depends on subtle number theoretic 
properties of $\alpha$, see \cite{Hackmann2013,HubKuz2011,Kuznetsov2013}. We will summarize some of the results on this series representation here. 
First of all, for any irrational $\alpha$  we  define sequences $\{a_{m,n}\}_{m\ge 0,n\ge 0}$ and  $\{b_{m,n}\}_{m\ge 0,n\ge 1}$ as
follows
\begin{equation}\label{def_a_mn}
a_{m,n}:=\frac{(-1)^{m+n} }{\Gamma\left(1-\rho-n-\frac{m}{\alpha}\right)\Gamma(\alpha\rho+m+\alpha n)}
\prod\limits_{j=1}^{m} \frac{\sin\left(\frac{\pi}{\alpha} \left( \alpha \rho+ j-1 \right)\right)} {\sin\left(\frac{\pi j}{\alpha} \right)} 
\prod\limits_{j=1}^{n} \frac{\sin(\pi \alpha (\rho+j-1))}{\sin(\pi \alpha j)},
\end{equation}
\begin{equation}\label{def_b_mn}
b_{m,n}:=\frac{\Gamma\left(1-\rho-n-\frac{m}{\alpha}\right)\Gamma(\alpha\rho+m+\alpha n) }{\Gamma\left(1+n+\frac{m}{\alpha}\right)\Gamma(-m-\alpha n)}
a_{m,n}.
\end{equation} 
We also introduce a set ${\mathcal A}$, which consists of all of all real irrational numbers $x$, for which there exists a constant $b>1$ such that the inequality
\begin{equation}\label{eqn_def_set_L}
\Big| x -\frac{p}{q} \Big| < \frac{1}{b^{q}}
\end{equation}
is satisfied for infinitely many integers $p$ and $q$. It is known \cite{HubKuz2011}  that this set is dense in $\r$ and has Lebesgue measure zero (and Hausdorff dimension zero).  
The following result was proved in 
\cite{HubKuz2011}. 
\begin{theorem}\label{thm_main}
Assume that $\alpha \notin {\mathcal A} \cup {\mathbb Q}$. Then for all $x>0$ 
\begin{align}\label{eqn_p1}
p_{S_1}(x)& = x^{-1-\alpha } \sum\limits_{n\ge 0} \sum\limits_{m\ge 0}b_{m,n+1} x^{-m-\alpha n}, \;\; {\textnormal { if }} \alpha \in (0,1), \\
\label{eqn_p2}
p_{S_1}(x)&= x^{\alpha\rho-1} \sum\limits_{n\ge 0} \sum\limits_{m\ge 0} a_{m,n} x^{ m+\alpha n}, \;\;\;\;\;\;\;  {\textnormal { if }} \alpha \in (1,2).
\end{align}
The series converges absolutely and uniformly on compact subsets of $(0,\infty)$.  
Moreover, for every $\alpha \notin {\mathbb Q}$ the series \eqref{eqn_p1} \emph{(}resp. \eqref{eqn_p2}\emph{)} provides complete asymptotic expansion as $x\to +\infty$ \emph{(}resp. $x\to 0^+$\emph{)}.
\end{theorem}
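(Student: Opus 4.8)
The plan is to treat both series as the residue expansions of the Mellin--Barnes integral representing $p_{S_1}$ and to control the remainder left after shifting the contour. Writing the density as the inverse Mellin transform
\[
p_{S_1}(x)=\frac{1}{2\pi\i}\int_{\gamma}\e\big[(S_1)^{s-1}\big]\,x^{-s}\,\d s,
\]
I would first locate the poles of the integrand. Since for irrational $\alpha$ the double gamma function $G(\cdot\,;\alpha)$ is entire with simple zeros on the lattice $\{m\alpha+n:m,n\le 0\}$, the only poles of $\e[(S_1)^{s-1}]$ come from the factors $G(\alpha\rho+s-1;\alpha)$ and $G(\alpha-s+1;\alpha)$ sitting in the denominator. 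This produces two families of simple poles: those at $s=1+\alpha+m+\alpha n$ ($m,n\ge 0$) lying to the right of $\gamma$, which upon a rightward shift yield the negative powers $x^{-1-\alpha-m-\alpha n}$ of \eqref{eqn_p1}, and those at $s=1-\alpha\rho-m-\alpha n$ lying to the left, which upon a leftward shift yield the positive powers $x^{\alpha\rho-1+m+\alpha n}$ of \eqref{eqn_p2}.

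Next I would evaluate the residues in closed form. Near a zero of $G(\cdot\,;\alpha)$ I would apply the two quasi-periodicity relations \eqref{eq:funct_rel_G} repeatedly to transport the zero to the simple zero of $G(\cdot\,;\alpha)$ at the origin, where $G(z;\alpha)\sim z/\alpha$ gives the residue $\alpha$ for $1/G$. Each functional equation introduces one Euler gamma factor, and rewriting the resulting gamma products through the reflection formula $\Gamma(w)\Gamma(1-w)=\pi/\sin(\pi w)$ produces exactly the ratios of sines $\sin(\pi\alpha(\rho+j-1))/\sin(\pi\alpha j)$ and $\sin(\tfrac{\pi}{\alpha}(\alpha\rho+j-1))/\sin(\tfrac{\pi j}{\alpha})$ entering \eqref{def_a_mn}--\eqref{def_b_mn}. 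Matching the two residue families with $a_{m,n}$ and $b_{m,n+1}$ is then a bookkeeping verification, and the relation \eqref{def_b_mn} between the two coefficient families reflects the symmetry between the leftward and rightward shifts.

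The complete asymptotic expansion, valid for every irrational $\alpha$, would follow from a finite truncation of this procedure: after crossing the first finitely many poles the leftover contour integral is $O(x^{-\re(s_{\ast})})$ (respectively $O(x^{\re(s_{\ast})})$), with the decay controlled by the uniform large-$s$ asymptotics of $\ln G$ in Proposition~\ref{prop_asymptotics_G}. This is precisely the mechanism behind Theorems~\ref{thm4} and \ref{thm5} (and Lemmas~\ref{lm:F_zero_asymp}, \ref{lm:F_inf_asymp}), iterated to all orders, so no arithmetic hypothesis on $\alpha$ is needed for the asymptotic statement.

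The genuine difficulty, and the reason the Diophantine hypothesis $\alpha\notin\mathcal{A}$ appears, is to show that the full double series converges and represents $p_{S_1}(x)$ for \emph{all} $x>0$. In the relevant range of $\alpha$ the gamma factors in \eqref{def_a_mn}--\eqref{def_b_mn} combine, after the reflection formula, into a ratio that decays super-exponentially, but the sine products $\prod_{j=1}^{n}|\sin(\pi\alpha j)|^{-1}$ and $\prod_{j=1}^{m}|\sin(\tfrac{\pi j}{\alpha})|^{-1}$ can grow very fast whenever $\alpha$ is well approximable by rationals, since $|\sin(\pi\alpha j)|$ is comparable to the distance from $\alpha j$ to the nearest integer. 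The main obstacle is therefore the number-theoretic estimate: under the exclusion of the Liouville-type set $\mathcal{A}$ defined in \eqref{eqn_def_set_L}, one must prove a lower bound of the form $|\sin(\pi\alpha j)|\ge C\,b^{-\epsilon j}$ (and its analogue with $1/\alpha$) valid for every $b>1$ and all large $j$, so that these sine products grow only subexponentially. Balancing this subexponential growth against the super-exponential decay of the gamma factors would give absolute convergence, uniform on compact subsets of $(0,\infty)$; the same estimates bound the remainder integral after $N$ residues and show that it vanishes as $N\to\infty$, identifying the series sum with $p_{S_1}(x)$. I expect this arithmetic control of the small denominators to be the crux, the residue computation and the truncated asymptotics being comparatively routine.
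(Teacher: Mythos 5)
The paper does not prove Theorem~\ref{thm_main}; it quotes it from \cite{HubKuz2011}, so there is no in-text proof to compare against. Your outline nevertheless reproduces the strategy of that reference correctly: residue expansion of the Mellin--Barnes integral at the zeros of the double gamma factors, quasi-periodicity plus the reflection formula to produce the sine products in \eqref{def_a_mn}--\eqref{def_b_mn}, finite contour shifts for the asymptotic expansion (valid for all irrational $\alpha$), and the Diophantine lower bound on $|\sin(\pi\alpha j)|$ forced by $\alpha\notin\mathcal{A}$ as the crux of the convergence of the full double series.
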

It was established in \cite{Kuznetsov2013} that the series \eqref{eqn_p1} diverges for a dense set of irrational numbers $\alpha \in (0,1)$. It is also known that for every irrational $\alpha$ this series can be made to converge conditionally, if one performs summation over triangles $m+1+\alpha(n+1/2) < q_k$ for a carefully chosen sequence $q_k \to +\infty$, see \cite{Hackmann2013} for details.

\subsection{Eigenfunctions of fractional Laplacian on $(0,\infty)$}\label{subsection_fractional_Laplacian}

Let us denote by $F$ the eigenfunction of the fractional Laplace operator 
$(-\Delta)^{\alpha/2}$ on $(0,\infty)$ with Dirichlet condition on 
$(-\infty,0]$. This problem is related to an $\alpha$-stable process killed at the first exit from $(0,\infty)$, see \cite{Kuznetsov2018}.  It was first established in 
\cite{Kwasnicki2011} that $F$ can be expressed as follows 
$$
F(x)=\sin(x+\pi(2-\alpha)/8)-g(x), 
$$
for certain completely monotone bounded function $g$.  The Mellin transform of $F$  can be obtained from Theorem 1.5 in \cite{Kuznetsov2018} by setting $\rho=1/2$:
\begin{equation}\label{Mellin_F_S2}
\int_0^{\infty} F(x)x^{z-1} \d x=\frac{ \Gamma(z)  S_2(z;\alpha)}{2 S_2(\alpha/2 +z;\alpha)}, \;\;\;  -\alpha/2< \re(z)<0.  
\end{equation}
Here $S_2$ is the double sine function \cite{Barnes1899,Kuznetsov2018}:
\begin{equation}\label{eqn_S2_Barnes_Gamma}
S_2(z;\alpha):=(2\pi)^{(1+\alpha)/2-z} \frac{G(z;\alpha)}{G(1+\alpha-z;\alpha)}. 
\end{equation}
This function satisfies the two functional equations 
\begin{equation}\label{S2_two_functional_eqns}
S_2(z+1;\alpha)=\frac{S_2(z;\alpha)}{2\sin(\pi z/\alpha)}, \;\;\; S_2(z+\alpha;\alpha)=\frac{S_2(z;\alpha)}{2\sin(\pi z)},
\end{equation}
and the normalizing condition $S_2((1+\alpha)/2;\alpha)=1$. 
Expressing the double sine functions in \eqref{Mellin_F_S2} in terms of double gamma functions, we obtain 
\begin{align*}
\int_0^{\infty} F(x)x^{z-1} \d x&=\frac{1}{2} (2\pi)^{\alpha/2} \Gamma(z) 
\frac{G(z;\alpha) G(1+\alpha/2-z;\alpha)}
{G(1+\alpha-z;\alpha) G(\alpha/2+z;\alpha)}
\\
&=
\alpha^z \sqrt{\frac{\pi}{2\alpha}}
\frac{G(z+\alpha;\alpha)G(1+\alpha/2-z;\alpha)}
{G(1+\alpha-z;\alpha) G(\alpha/2+z;\alpha)}, 
\end{align*}
where in the last step we used the second functional equation from \eqref{eq:funct_rel_G}. 
From the above formula we obtain the desired representation of the eigenfunction of fractional Laplacian
\begin{equation}
F(x)=
\sqrt{\frac{\pi \alpha}{2}} x^{-1}
K_{2,2}^{1,1}\Big( 
\begin{matrix}
0, \alpha \\ \alpha/2, \alpha/2
\end{matrix} \Big  \vert x/\alpha; \alpha
\Big ).
\end{equation}
In this case $N=p-q=0$ and $\mu=-\alpha$, which corresponds to a balanced case. 

\bigskip

\subsection{Exponential functionals of hypergeometric processes}

Let $X=\{X_t\}_{t\ge 0}$ be a hypergeometric L\'evy process, defined via the Laplace exponent 
$$
\psi(z)=\ln \e [\exp(z X_1)]=-\frac{\Gamma(1-\beta+\gamma-z)
\Gamma(\hat \beta+\hat \gamma+z)}{\Gamma(1-\beta-z)\Gamma(\hat \beta+z)}.
$$
Here the parameters satisfy $\beta \le 1$, $\hat \beta\ge 0$ and $\gamma,\hat \gamma \in (0,1)$. 
Such processes were studied \cite{KuznetsovPardo2013}, where it was shown that they are generalizations of the Lamperti-stable processes \cite{kyprianou_pardo_2022}. A very important random variable that is used in the study of $\alpha$-stable processes
is the exponential functional of hypergeometric processes, defined as 
$$
I=I(\alpha,X)=\int_0^{\zeta-} e^{-\alpha X_t} \d t,
$$
where $\alpha>0$ and $\zeta$  is the lifetime of the process $X$. 
The Mellin transform of $I$ is defined as $M(s):=\e[I^{s-1}]$ and it is known \cite{KuznetsovPardo2013} that $M$ satisfies the equation 
$M(s+1)=-M(s)/\psi(\alpha s)$. Given that $\psi$ is a product of gamma functions, this equation  can be easily solved in terms of the double gamma functions (via \eqref{eq:funct_rel_G}) and a certain uniqueness argument  \cite[Proposition 2]{KuznetsovPardo2013} then shows that  
\begin{equation}\label{exp_functional}
M(s)=C \Gamma(s) \frac{G((1-\beta)\delta+s;\delta)
G((\hat \beta+\hat \gamma)\delta+1-s;\delta)}
{G((1-\beta+\gamma)\delta+s;\delta) G(\hat \beta \delta+1-s;\delta)},
\end{equation}
where $\delta:=1/\alpha$ and $C$ is a normalizing constant which makes the expression in the right-hand side equal to $1$ when $s=1$. 
Expressing the gamma function in \eqref{exp_functional} as a ratio of two double gamma functions via \eqref{gamma_as_G} and writing the probability density function $p_I(x)$ (of random variable $I$) as the inverse Mellin transform, we obtain the following expression 
\begin{equation}
p_I(x)=
C\sqrt{\delta} (2\pi)^{(1-\delta)/2}
x^{-1} K_{3,3}^{1,2}\Big( 
\begin{matrix}
0, \beta\delta, \hat \beta \delta \\
(\hat \beta+\hat \gamma)\delta,  (\beta-\gamma)\delta, \delta
\end{matrix} \Big  \vert x/\delta; \delta
\Big ).
\end{equation}

In deriving the above formula we also used \eqref{eq:transformation_K_shift_by_c}. 
Some results on series representation of $p_I(x)$ can be found in \cite[Section 4]{KuznetsovPardo2013}. In this case we have $N=p-q=0$, 
$\mu=\delta(\gamma+\hat \gamma-1)$ and $\nu=\delta(\gamma-\hat \gamma-1)$, thus this example falls either in the balanced case (when $\gamma+\hat\gamma\neq 1$) or the linear case (when 
$\gamma+\hat\gamma=1$).

\subsection{Generalized Kilbas-Saigo function}\label{section_Kilbas_Saigo}

Following \cite{BS2021} we introduce a Pochhammer type symbol 
$$
[a;\tau]_z:=\frac{G(a+z;\tau)}{G(a;\tau)}.
$$
Note that in view of the first relation in \eqref{eq:funct_rel_G} for $n=0,1,2,\dots$ we have
$$
[a;\tau]_{0}=1,~~[a;\tau]_n
=\prod\limits_{k=0}^{n-1}
\Gamma((a+k)/\tau). 
$$
Assume that $\tau>0$, $a_j \in \r \setminus \{ -m\tau-n  \vert m,n\ge 0\}$, $b_j>0$ for $j=1,\dots,p$,  and 
\begin{equation}\label{assumption_KS}
 \sum\limits_{j=1}^p (b_j-a_j)>0. 
\end{equation}
Define the function 
\begin{equation}\label{def_gen_KS_function}
f(x)=
\sum\limits_{n\ge 0} \frac{[a_1,\dots,a_p;\tau]_n}{[b_1,\dots,b_p;\tau]_n}x ^n,
\end{equation}
where 
$$
[a_1,\dots,a_p;\tau]_n:=\prod\limits_{j=1}^p 
[a_j;\tau]_n
$$
From Proposition \ref{prop_asymptotics_G} we find that
\begin{equation}\label{asymptotics_Pocchammer}
 \frac{[a;\tau]_z}{[b;\tau]_z}=
  \frac{G(b;\tau)}{G(a;\tau)} \exp\Big( 
  \frac{a-b}{\tau} z\ln(z)
  -\frac{(a-b)}{\tau}\Big(1+\ln(\tau)\Big) z
  +O(\ln |z|) \Big), \;\;\; z\to \infty,
\end{equation}
which holds uniformly in any sector $|\arg(z)|<\pi - \epsilon. $
The above asymptotic result and our assumption 
\eqref{assumption_KS} guarantee that $f(x)$ is an entire function. This function is a natural generalization of the Kilbas-Saigo function (see 
 \cite[Chapter~5.2]{GKMR2020} and \cite{BS2021}), defined as
$$
E_{\alpha,m,l}(x)=1+\sum\limits_{n\ge 1} \Big[ \prod\limits_{k=0}^{n-1} 
\frac{\Gamma(1+\alpha(km+l))}{\Gamma(1+\alpha(km+l+1))} \Big]x^n=
\sum\limits_{n\ge 0} \frac{[\tilde a;\tilde \tau]_n}{[\tilde b;\tilde \tau]_n} x^n,
$$
with $\tilde \tau=1/(\alpha m)$, $\tilde{a}=(\alpha l+1) \tilde \tau$ and $\tilde{b}=\tilde{a}+1/m$. 

As we show in the next result, the function 
$f$ defined in \eqref{def_gen_KS_function} is a special case of the Meijer-Barnes $K$-function.

\begin{proposition}\label{prop_KS_as_MB}
Assume that  $0<\sum\limits_{j=1}^p (b_j-a_j)<2 \tau$. Then for $x>0$  
\begin{equation}\label{KS_as_MB}
f(-x)= C \times K^{p+1,1}_{p+2,p+2}\Big(\begin{matrix}
		1,1,b_1,\dots,b_p \\a_1,\dots,a_p,1+\tau,1+\tau\end{matrix} \Big\vert x; \tau\Big),
\end{equation}
where 
$$
C:=(2\pi)^{1-\tau} 
\prod\limits_{j=1}^p \frac{G(b_j;\tau)}{G(a_j;\tau)}.   
$$
\end{proposition}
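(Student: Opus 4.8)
The plan is to evaluate the Mellin--Barnes integral \eqref{eq:def_K_function} defining the $K$-function on the right of \eqref{KS_as_MB} by residues, and to match the resulting power series with $f(-x)$. First I would record which of the seven cases we are in. For the data $\mathbf a=(1,1,b_1,\dots,b_p)$, $\mathbf b=(a_1,\dots,a_p,1+\tau,1+\tau)$, with Meijer indices $m=p+1$, $n=1$ and both lower indices equal to $p+2$, formula \eqref{eq:parameters} gives $N=0$, $p-q=0$, $\mu=-\sum_{j=1}^p(b_j-a_j)$ and $\nu=\sum_{j=1}^p(b_j-a_j)-2\tau$. Since $\alpha=0$ and, by \eqref{assumption_KS}, $\mu\in\r\setminus\{0\}$, we are in the balanced case (v). The upper bound $\sum_j(b_j-a_j)<2\tau$ makes $\re(\nu)=\nu<0$, so Remark~\ref{remark1} permits the contour $\gamma=\L_{\i\infty}$ --- a vertical line $\re(s)=c$ --- for the real argument $z=x>0$, whose argument $0$ lies in the admissible sector $|\arg z|<-\pi\nu/(2\tau)$.

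The key simplification is to put $\G_{p+2,p+2}^{p+1,1}(s)$ in closed form. Reading off \eqref{eq:integrand}, the two top parameters equal to $1$ and the two bottom parameters equal to $1+\tau$ contribute exactly the ratios $G(\tau+s;\tau)/G(s;\tau)$ and $G(1+\tau-s;\tau)/G(1-s;\tau)$, while the remaining factors give $\prod_{j}G(a_j-s;\tau)/G(b_j-s;\tau)$. Applying the identity \eqref{gamma_as_G} to each of the two ratios and using $\Gamma(s)\Gamma(1-s)=\pi/\sin(\pi s)$, the powers of $\tau$ cancel and I obtain
\[
\G_{p+2,p+2}^{p+1,1}\Big(\begin{matrix}\mathbf a\\\mathbf b\end{matrix}\Big\vert s;\tau\Big)=(2\pi)^{\tau-1}\frac{\pi}{\sin(\pi s)}\prod_{j=1}^p\frac{G(a_j-s;\tau)}{G(b_j-s;\tau)}.
\]
In this form the singularities to the left of the line $\re(s)=c$ (with $0<c<\min(1,b_1,\dots,b_p)$) are transparent: the product over $j$ is holomorphic and non-vanishing there, since its poles lie in $\Lambda^+$ to the right and its numerator does not vanish by the assumption on the $a_j$. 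Hence the only poles in $\re(s)<c$ are the simple poles of $\pi/\sin(\pi s)$ at the non-positive integers $s=-n$, $n\ge0$.

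I would then close the contour to the left. Using $\res_{s=-n}\pi/\sin(\pi s)=(-1)^n$ and $z^{-s}=z^n$ at $s=-n$, together with $[a;\tau]_n=G(a+n;\tau)/G(a;\tau)$, the residue is
\[
\res_{s=-n}\Big[\G_{p+2,p+2}^{p+1,1}(s)\,z^{-s}\Big]=(2\pi)^{\tau-1}\Big(\prod_{j=1}^p\frac{G(a_j;\tau)}{G(b_j;\tau)}\Big)(-1)^n\frac{[a_1,\dots,a_p;\tau]_n}{[b_1,\dots,b_p;\tau]_n}\,z^n=C^{-1}(-1)^n\frac{[a_1,\dots,a_p;\tau]_n}{[b_1,\dots,b_p;\tau]_n}\,z^n,
\]
because $(2\pi)^{\tau-1}\prod_jG(a_j;\tau)/G(b_j;\tau)=C^{-1}$. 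With the orientation of $\gamma$ leaving $\Lambda^-$ on the left, closing to the left gives $K_{p+2,p+2}^{p+1,1}(z;\tau)=\sum_{n\ge0}\res_{s=-n}[\,\cdots]=C^{-1}\sum_{n\ge0}\big([a_1,\dots,a_p;\tau]_n/[b_1,\dots,b_p;\tau]_n\big)(-z)^n=C^{-1}f(-z)$; setting $z=x$ yields \eqref{KS_as_MB}. The positive sign of the residue sum is consistent with the leading-order statement of Theorem~\ref{thm4}.

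The step I expect to be the main obstacle is justifying that the integral over $\L_{\i\infty}$ equals the \emph{full} convergent residue series for every $x>0$, rather than only its leading terms as in Theorem~\ref{thm4}. For this I would shift the line $\re(s)=c$ to $\re(s)\to-\infty$ and show that the shifted integral tends to $0$. The decisive estimate is that $\prod_jG(a_j-s;\tau)/G(b_j-s;\tau)$ decays faster than any exponential as $\re(s)\to-\infty$: with $w=-s$ this is precisely the super-exponential decay $\prod_j[a_j;\tau]_w/[b_j;\tau]_w\sim\exp(-\tau^{-1}\sum_j(b_j-a_j)\,w\ln w+\cdots)$ coming from \eqref{asymptotics_Pocchammer} and Proposition~\ref{prop_asymptotics_G}, which holds because $\sum_j(b_j-a_j)>0$. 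This is exactly the bound that already guarantees $f$ is entire, so the residue series converges and represents the integral uniformly on compact sets; controlling the shifted integral as it sweeps across the strings of poles, and handling the corner arcs joining the successive vertical lines, is the only genuinely delicate point, the remainder being the bookkeeping of \eqref{gamma_as_G} and of the residues of $\pi/\sin(\pi s)$.
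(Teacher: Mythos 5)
Your argument is correct, and it reaches the identity by running the paper's proof in the opposite direction. The paper starts from the series $f(-x)=\sum_n\varphi(n)(-x)^n$ and invokes Ramanujan's Master Theorem to produce the Mellin--Barnes integral \eqref{eq57} with kernel $\pi/\sin(\pi s)$, then uses \eqref{eq58} (the same application of \eqref{gamma_as_G} that you perform) to recognize that integral as the $K$-function; you instead start from the $K$-function, collapse the four extra double-gamma factors to $(2\pi)^{\tau-1}\pi/\sin(\pi s)$, and expand the integral into its residue series over the non-positive integers. The two routes share the identical algebraic core; the difference is that the Master Theorem packages exactly the contour-closing and tail estimates you carry out by hand, so the paper gets to outsource the convergence of the residue expansion to the hypotheses of \cite{Amdeberhan2012}, while your version is self-contained but must supply the estimate on the shifted lines $\re(s)=c-M$. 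You correctly identify where the two hypotheses enter: $\sum_j(b_j-a_j)>0$ gives the super-exponential decay of $\prod_jG(a_j-s;\tau)/G(b_j-s;\tau)$ as $\re(s)\to-\infty$ that beats $x^{M}$, and $\sum_j(b_j-a_j)<2\tau$ (equivalently $\re(\nu)<0$, which is also what licenses the vertical contour via Remark~\ref{remark1}) is what makes the $e^{-\pi|\im s|}$ decay of $\pi/\sin(\pi s)$ dominate the growth of the double-gamma ratio as $|\im(s)|\to\infty$. Two small points of hygiene: the product $\prod_jG(a_j-s;\tau)/G(b_j-s;\tau)$ need not be non-vanishing throughout $\re(s)<c$ (if some $a_j<0$ its numerator has zeros there); what you actually need, and what the hypothesis $a_j\notin\{-m\tau-n\}$ guarantees, is that no such zero lands on a non-positive integer, so the poles of the integrand in $\re(s)<c$ are precisely the simple poles of $\pi/\sin(\pi s)$ at $s=0,-1,-2,\dots$ with the residues you computed; and there are in fact no ``strings of poles'' to sweep across when shifting left, since all poles of $1/G(b_j-s;\tau)$ lie in $\re(s)\ge\min_jb_j>c$, so the only delicate step is the vertical-line tail bound just described.
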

\begin{proof}
Let us denote 
$$
\varphi(z):=\frac{[a_1,\dots,a_p;\tau]_z}{[b_1,\dots,b_p;\tau]_z}.
$$
We have 
$$
f(-x)=\sum\limits_{n\ge 0} \varphi(n) (-x)^n
$$
and using \eqref{asymptotics_Pocchammer}  one can check that function $\varphi$ satisfies the conditions of Ramanujan's Master Theorem (see \cite{Amdeberhan2012}), thus formula (3.4) in \cite{Amdeberhan2012} gives us
\begin{equation}\label{eq57}
f(-x)=\frac{1}{2\pi \i} \int_{c+\i \r} 
\frac{\pi}{\sin(\pi s)} \varphi(-s) x^{-s} \d s,
\end{equation}
where $0<c < \min_{1\le{j}\le{p}} (b_j)$. In view of \eqref{gamma_as_G}
 we have
\begin{equation}\label{eq58}
\frac{\pi}{\sin(\pi{s})}=\Gamma(s)\Gamma(1-s)=(2\pi)^{1-\tau} \frac{G(s+\tau;\tau)G(1-s+\tau;\tau)}{G(s;\tau)G(1-s;\tau)}.
\end{equation}
Given the asymptotics in \eqref{asymptotics_Pocchammer}, we can change the contour of integration in \eqref{eq57} to ${\mathcal L}_{-\pi + \epsilon, \pi-\epsilon}$ and now,
 using \eqref{eq58} and comparing \eqref{eq57} with our definition of Meijer-Barnes $K$-function, we obtain the desired result 
 \eqref{KS_as_MB}. 
\end{proof}

Using exactly the same method one can establish the following result. 

\begin{proposition}\label{prop_KS_as_MB2}
The function
\begin{equation}\label{def_tilde_f}
    \tilde f(x)=\sum\limits_{n\ge 0} \frac{[a_1,\dots,a_p;\tau]_n}{[b_1,\dots,b_p;\tau]_n} \frac{x ^n}{n!},
\end{equation}
is entire if $0<\sum\limits_{j=1}^p (b_j-a_j)$. If, in addition, we have $\sum\limits_{j=1}^p (b_j-a_j)<\tau$ then 
\begin{equation}\label{KS_as_MB2}
f(-x)= C \times K^{p,1}_{p+1,p+1}\Big(\begin{matrix}
		1,b_1,\dots,b_p \\a_1,\dots,a_p,1+\tau\end{matrix} \Big\vert x/\tau; \tau\Big),  \;\;\; x>0,
\end{equation}
where 
$$
C:=(2\pi)^{(1-\tau)/2} \tau^{-1/2} 
\prod\limits_{j=1}^p \frac{G(b_j;\tau)}{G(a_j;\tau)}.  
$$
\end{proposition}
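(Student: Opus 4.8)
The plan is to follow the method of Proposition~\ref{prop_KS_as_MB} almost verbatim, the only structural change being that the extra factor $1/n!$ in \eqref{def_tilde_f} replaces the kernel $\pi/\sin(\pi s)$ by a single $\Gamma(s)$. First I would dispose of entireness. Writing $\varphi(z):=[a_1,\dots,a_p;\tau]_z/[b_1,\dots,b_p;\tau]_z$, the coefficient of $x^n$ in $\tilde f$ is $\varphi(n)/n!$, and the uniform asymptotics \eqref{asymptotics_Pocchammer} give $\varphi(n)=\exp\big(\tfrac{1}{\tau}\big(\sum_j(a_j-b_j)\big)n\ln n+O(n)\big)$. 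Under the hypothesis $\sum_j(b_j-a_j)>0$ this already forces $|\varphi(n)/n!|^{1/n}\to 0$, so the radius of convergence is infinite and $\tilde f$ is entire (the $1/n!$ is not even needed here, so the stated weaker condition suffices).

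Next I would represent $\tilde f(-x)$ as a Mellin--Barnes integral. Because the series carries the factor $1/n!$, the relevant tool is the classical form of Ramanujan's Master Theorem \cite{Amdeberhan2012}, namely $\tilde f(-x)=\frac{1}{2\pi\i}\int_{c+\i\r}\Gamma(s)\varphi(-s)x^{-s}\,\d s$ for $0<c<\min_j b_j$, rather than the $\pi/\sin(\pi s)$ variant used before. The hypotheses of the theorem---analyticity of $\varphi(-s)$ to the right of the contour together with suitable decay in the half-plane---are verified exactly as in Proposition~\ref{prop_KS_as_MB} from \eqref{asymptotics_Pocchammer}.

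The algebraic heart of the argument is to identify the integrand. Using $[a;\tau]_{-s}=G(a-s;\tau)/G(a;\tau)$ I would write $\varphi(-s)=\prod_j\tfrac{G(b_j;\tau)}{G(a_j;\tau)}\prod_j\tfrac{G(a_j-s;\tau)}{G(b_j-s;\tau)}$, and rewrite $\Gamma(s)$ by \eqref{gamma_as_G} as $(2\pi)^{(1-\tau)/2}\tau^{s-1/2}G(s+\tau;\tau)/G(s;\tau)$. Multiplying these gives
\[
\Gamma(s)\varphi(-s)=C\,\tau^{s}\,\frac{G(s+\tau;\tau)\prod_{j=1}^{p}G(a_j-s;\tau)}{G(s;\tau)\prod_{j=1}^{p}G(b_j-s;\tau)},\qquad C=(2\pi)^{(1-\tau)/2}\tau^{-1/2}\prod_{j=1}^{p}\frac{G(b_j;\tau)}{G(a_j;\tau)},
\]
with $C$ precisely as claimed. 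Comparing the ratio of double gamma functions with \eqref{eq:integrand}, I expect it to be exactly $\G^{p,1}_{p+1,p+1}(s)$ for the vectors $\mathbf a=(1,b_1,\dots,b_p)$, $\mathbf b=(a_1,\dots,a_p,1+\tau)$: the factor $G(s+\tau;\tau)=G(1+\tau-1+s;\tau)$ supplies the single numerator factor with $n=1$, while $G(s;\tau)=G(1+\tau-(1+\tau)+s;\tau)$ supplies the single denominator factor associated with $b_{p+1}=1+\tau$. Absorbing $\tau^{s}$ into the variable via $(x/\tau)^{-s}$ then yields $\tilde f(-x)=C\,K^{p,1}_{p+1,p+1}(\cdots\vert x/\tau;\tau)$.

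Finally I would check admissibility and the contour. From \eqref{eq:parameters} one computes $N=p-q=0$, $\mu=-\tau-\sum_j(b_j-a_j)$ and $\nu=\sum_j(b_j-a_j)-\tau$, so $\mu\in\r\setminus\{0\}$ and the extra hypothesis $\sum_j(b_j-a_j)<\tau$ is exactly $\re(\nu)<0$; this places the parameters in the balanced case~(v). By the third bullet of Remark~\ref{remark1} this is precisely the condition under which $\L_{\i\infty}$ (the vertical line $c+\i\r$) is an admissible contour for this $K$-function at $\arg(z)=0$, so the Mellin--Barnes integral produced by the Master Theorem is literally the defining integral of $K^{p,1}_{p+1,p+1}$, with no further contour deformation needed. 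The main obstacle I anticipate is not analytic but bookkeeping: confirming that the single leftover pair $G(s+\tau;\tau)/G(s;\tau)$ coming from $\Gamma(s)$ is absorbed correctly as the $n=1$ numerator factor and the $q$-th denominator factor (rather than as a shift of some $a_j$ or $b_j$), and checking that $\sum_j(b_j-a_j)<\tau$ keeps us squarely in case~(v) with $\L_{\i\infty}$ admissible.
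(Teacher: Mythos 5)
Your proposal is correct and is essentially the paper's own argument: the paper proves this proposition only by the remark ``using exactly the same method'' as Proposition~\ref{prop_KS_as_MB}, and you have carried out that method faithfully, with the right replacement of the kernel $\pi/\sin(\pi s)$ by $\Gamma(s)$, the correct identification of $\Gamma(s)\varphi(-s)$ with $C\,\tau^{s}\,\G^{p,1}_{p+1,p+1}(s)$, and the correct computation $\nu=\sum_j(b_j-a_j)-\tau<0$ justifying the contour $\L_{\i\infty}$ via Remark~\ref{remark1}. (You also correctly read the left-hand side of \eqref{KS_as_MB2} as $\tilde f(-x)$; the ``$f$'' there is a typo in the statement.)
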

One can check that both of the examples in this section belong to the balanced case of parameters. The functions of the form \eqref{def_tilde_f} appeared in \cite{Ostrovsky} as the Laplace transforms of Barnes beta distributions, to which we now turn our attention.

\subsection{Barnes beta distributions}\label{section_beta_distributions}

Ostrovsky \cite{Ostrovsky} introduced a family of higher-order beta distributions, called Barnes beta distributions (similar distributions appeared later in \cite{BS2021}). A particularly interesting example in that family is the random variable $\beta_{2,2}(\tau,{\mathbf w})$, defined via Mellin transform 
$$
\e[\beta_{2,2}(\tau,{\mathbf w})^s]=
\frac{\Gamma_2(s+w_0;1,\tau) \Gamma_2(w_0+w_1;1,\tau) \Gamma_2(w_0+w_2;1,\tau) \Gamma_2(s+w_0+w_1+w_2;1,\tau)}
{\Gamma_2(w_0;1,\tau) \Gamma_2(s+w_0+w_1;1,\tau) \Gamma_2(s+w_0+w_2;1,\tau) \Gamma_2(w_0+w_1+w_2;1,\tau)}. 
$$
Here $\re(s)>-w_0$ and ${\mathbf w}=(w_0,w_1,w_2)$ with $w_i>0$.  It is known that the random variable $\beta_{2,2}(\tau,{\mathbf w})$ is supported on $(0,1)$, it has absolutely continuous distribution  and that its logarithm is infinitely divisible  (see Theorem 2.4 and Corollary 2.5 in \cite{Ostrovsky}).  The random variable $\beta_{2,2}(\tau,{\mathbf w})$ has interesting connection to the celebrated Selberg's integral (see also \cite{Ostrovsky_2012}). It turns out that a product of five independent random variables, three of which are equal in distribution to $1/\beta_{2,2}(\tau,{\mathbf w})$ (with different parameters ${\mathbf w}$) and the remaining two have Frechet and lognormal distributions, has moments given by the Selberg's integral (see Theorems 4.1 and 4.2 in \cite{Ostrovsky}). 

Using \eqref{G_as_Gamma1} we can rewrite the above Mellin transform in the form
$$
\e[\beta_{2,2}(\tau,{\mathbf w})^s]=
\frac{G(w_0;\tau) G(s+w_0+w_1;\tau) G(s+w_0+w_2;\tau) G(w_0+w_1+w_2;\tau)}{G(s+w_0;\tau) G(w_0+w_1;\tau) G(w_0+w_2;\tau) G(s+w_0+w_1+w_2;\tau)}. 
$$

Writing the density $q(x)$ of the random variable $\beta_{2,2}(\tau,{\mathbf w})$ as the inverse Mellin transform and applying
\eqref{eq:transformation_K_shift_by_c} and \eqref{eq:tranformation_K_z_1/z} we obtain the following result: for $w_1 w_2>\tau$ and $x>0$
$$
q(x)=C x^{w_0-1}
K_{2,2}^{2,0}\Big( 
\begin{matrix}
0, w_1+w_2 \\
w_1, w_2
\end{matrix} \Big  \vert x^{-1}; \tau
\Big ),
$$
where 
$$
C=\frac{G(w_0;\tau)  G(w_0+w_1+w_2;\tau)}{G(w_0+w_1;\tau) G(w_0+w_2;\tau) }.
$$
This is an example of the logarithmic case, as here we have $N=p-q=\mu=\nu=0$ and $\xi=-2w_1w_2$. 
Theorem~\ref{thm_logarithmic_case} tells us that $q(x)=0$ for $x>1$, as previously stated in \cite{Ostrovsky}.



\appendix

\setcounter{equation}{0}
\renewcommand{\theequation}{\Alph{section}.\arabic{equation}}

\section{Alternative definition in terms of 
$\Gamma_2(z;\omega_1,\omega_2)$}\label{AppendixA}
  
Here we will follow \cite{Spreafico2009}. 
Let $\omega_1$ and $\omega_2$ be positive numbers and denote $\boldsymbol{\omega}:=(\omega_1, \omega_2)$. The double gamma function $\Gamma_2(z;\boldsymbol{\omega})$ ($\Gamma^B(z;\boldsymbol{\omega})$ in the notation  of \cite{Ruijsenaars2000}) as defined in \cite[(3), Corollary~5.3, Proposition~8.3]{Spreafico2009} is related to the $G$ function via \eqref{G_as_Gamma1}.  The $\Gamma_2$ function satisfies the functional equations 
\eqref{Gamma_2_functional_equations}, 
as well as the identity 
\begin{equation*}
\Gamma_2(cz;{c}\omega_1,{c}\omega_2)=c^{(\frac{1}{\omega_1}+\frac{1}{\omega_2})\frac{z}{2}-\frac{z^2}{2\omega_1\omega_2}-1}
\Gamma_2(z;\omega_1,\omega_2), \;\;\; c>0,
\end{equation*}
from which one can derive formula \eqref{G_as_Gamma1}.

We define  $\tilde K$-function as 
\begin{equation}\label{def_K_function2}
\tilde K_{p,q}^{m,n}\Big( 
\begin{matrix}
{\mathbf a} \\ {\mathbf b} \end{matrix} \Big  \vert z;\boldsymbol{\omega}, \alpha \Big )
=\frac{1}{2\pi \i} \int\limits_{\tilde\gamma} 
\frac{\prod\limits_{j=1}^m \Gamma_2(b_j-s;\boldsymbol{\omega})
	 \prod\limits_{j=1}^n \Gamma_2(\omega_1+\omega_2-a_j+s;\boldsymbol{\omega})}
	{\prod\limits_{j=m+1}^q \Gamma_2(\omega_1+
 \omega_2-b_j+s;\boldsymbol{\omega}) 
	 \prod\limits_{j=n+1}^p \Gamma_2(a_j-s;\boldsymbol{\omega})} e^{\frac{\pi \alpha}{
     \omega_1 \omega_2} s^2} z^{-s}ds,
\end{equation}
where the contour $\tilde\gamma=\omega_1\gamma$ with $\gamma$ chosen according to Definition~\ref{def_K_function} applied to the $K$-function on the right hand side of the following relation:
\begin{equation}\label{eq:Ktilde-K}
\tilde K_{p,q}^{m,n}\Big( 
\begin{matrix}
a_1, \dots, a_p \\ b_1, \dots, b_q
\end{matrix} \Big  \vert z; \boldsymbol{\omega},\alpha\Big ) 
=C_1 K_{q,p}^{p-n,q-m}\Bigg( 
\begin{matrix}
\tilde a_1, \dots, \tilde a_q \\ \tilde b_1, \dots, \tilde b_p
\end{matrix} \Big  \vert C_2 z^{\omega_1}; \tau,\eta \Bigg),
\end{equation}
which we will prove below.  Here $\tau=\omega_2/\omega_1$,  $\eta=\alpha-N\ln(\omega_2)/(2\pi)$,  $\tilde a_j=b_{q-j+1}/\omega_1$, $\tilde b_j=a_{p-j+1}/\omega_1$  and 
$$
C_1=(2\pi)^{((\omega_1+\omega_2)(m+n-q)-\nu)/(2\omega_1)}\omega_2^{-N-\xi/(2\omega_1\omega_2)+\mu(\omega_1+\omega_2)/(2\omega_1\omega_2)}\omega_1,
$$
$$
C_2=(2\pi)^{(q-p)/2}\omega_2^{(N(\omega_1+\omega_2)/2-\mu)/\tau},
$$
with $N$, $\nu$, $\mu$  and $\xi$  defined in \eqref{eq:parameters}.
Indeed, in view of \eqref{G_as_Gamma1} we obtain:
\begin{multline*}
    \frac{\prod\limits_{j=1}^m \Gamma_2(b_j-s;\boldsymbol{\omega})
	 \prod\limits_{j=1}^n \Gamma_2(\omega_1+\omega_2-a_j+s;\boldsymbol{\omega})}
	{\prod\limits_{j=m+1}^q \Gamma_2(\omega_1+\omega_2-b_j+s;\boldsymbol{\omega}) 
	 \prod\limits_{j=n+1}^p \Gamma_2(a_j-s;\boldsymbol{\omega})}
	 \\
	 =(2\pi)^{((p-q)s+(\omega_1+\omega_2)(m+n-q)-\nu)/(2\omega_1)}\omega_2^{(-Ns^2+2\mu{s}-\xi)/(2\omega_1\omega_2)-N}\omega_2^{(\omega_1+\omega_2)(-Ns+\mu)/(2\omega_2\omega_2)}
	\\
	\times\frac{\prod_{j=m+1}^{q}G\big(1+\tau-b_j/\omega_1+s/\omega_1;\tau\big)\prod_{j=n+1}^{p}G\big(a_j/\omega_1-s/\omega_1;\tau\big)}
	{\prod_{j=1}^{m}G\big(b_j/\omega_1-s/\omega_1;\tau\big)\prod_{j=1}^{n}G\big(1+\tau-a_j/\omega_1+s/\omega_1;\tau\big)}.
\end{multline*}
Comparing with the  definition \eqref{eq:def_K_function}
$$
K_{p,q}^{m,n}\Big( 
\begin{matrix}
{\bf a} \\  {\bf b} 
\end{matrix} \Big
\vert z; \tau, \eta
\Big )=\frac{1}{2\pi \i} \int\limits_{\gamma} 
\frac{\prod\limits_{j=1}^m G(b_j-s;\tau)\prod\limits_{j=1}^n G(1+\tau-a_j+s;\tau)}
	{\prod\limits_{j=m+1}^q G(1+\tau-b_j+s;\tau)\prod\limits_{j=n+1}^p G(a_j-s;\tau)}e^{\frac{\pi \eta}{\tau}{s^2}}z^{-s}ds,
$$
we obtain:
\begin{multline*}
    \tilde K_{p,q}^{m,n}\Big( 
\begin{matrix}
{\mathbf a} \\ {\mathbf b}
\end{matrix} \Big  \vert z;\boldsymbol{\omega}
\Big )=\frac{(2\pi)^{((\omega_1+\omega_2)(m+n-q)-\nu)/(2\omega_1)}}{2\pi \i}\omega_2^{-N-\xi/(2\omega_1\omega_2)+\mu(\omega_1+\omega_2)/(2\omega_1\omega_2)}
\\
\times\int\limits_{\tilde\gamma}\frac{\prod_{j=m+1}^{q}G\big(1+\tau-b_j/\omega_1+s/\omega_1;\tau\big)\prod_{j=n+1}^{p}G\big(a_j/\omega_1-s/\omega_1;\tau\big)}
	{\prod_{j=1}^{m}G\big(b_j/\omega_1-s/\omega_1;\tau\big)\prod_{j=1}^{n}G\big(1+\tau-a_j/\omega_1+s/\omega_1;\tau\big)} 
	\\
	\times (2\pi)^{(p-q)s/(2\omega_1)}\omega_2^{(-Ns^2+2\mu{s})/(2\omega_1\omega_2)}\omega_2^{-Ns(\omega_1+\omega_2)/(2\omega_1\omega_2)}e^{\frac{\pi \alpha}{
     \omega_1 \omega_2} s^2}z^{-s}ds
\\
=\frac{(2\pi)^{((\omega_1+\omega_2)(m+n-q)-\nu)/(2\omega_1)}}{2\pi \i}\omega_2^{-N-\xi/(2\omega_1\omega_2)+\mu(\omega_1+\omega_2)/(2\omega_1\omega_2)}\omega_1 
\\
\times\int\limits_{\gamma}\frac{\prod_{j=m+1}^{q}G\big(1+\tau-b_j/\omega_1+s;\tau\big)\prod_{j=n+1}^{p}G\big(a_j/\omega_1-s;\tau\big)}
	{\prod_{j=1}^{m}G\big(b_j/\omega_1-s;\tau\big)\prod_{j=1}^{n}G\big(1+\tau-a_j/\omega_1+s;\tau\big)} 
	e^{\pi\eta{s^2}/\tau}(C_2z^{\omega_1})^{-s}ds
\\
=(2\pi)^{((\omega_1+\omega_2)(m+n-q)-\nu)/(2\omega_1)}\omega_2^{-N-\xi/(2\omega_1\omega_2)+\mu(\omega_1+\omega_2)/(2\omega_1\omega_2)}\omega_1 K^{p-n,q-m}_{q,p}\Bigg( 
\begin{matrix}
   \tilde {\bf a}\\ 
   \tilde {\bf b} 
\end{matrix} \Big
\vert C_2 z^{\omega_1}; \tau, \eta\Bigg),	
\end{multline*}
thus proving \eqref{eq:Ktilde-K}.

\setcounter{equation}{0}
\renewcommand{\theequation}{\Alph{section}.\arabic{equation}}
\section{Proof of \eqref{eq:G_s_over_tau_transformation}}\label{AppendixB}

We denote $C:=(2\pi)^{\frac{1}2 \left(1-\frac1{\tau} \right)} 
\tau^{\frac{1}{2}(1+\frac{1}{\tau})}$ and recall that $N$, $\mu$, $\nu$ and $\xi$ are defined in  \eqref{eq:parameters}.  From \eqref{eq:G_1_over_tau} we have 
\begin{equation}\label{eq:G_1_over_tau_v2}
G(z;\tau)=
C^z \tau^{-\frac{z^2}{2\tau}-1} 
 G\left(\frac{z}{\tau};\frac{1}{\tau}\right).
\end{equation}
We apply the above formula to every double gamma factor in \eqref{eq:integrand} and obtain
\begin{align}\label{G_transformation_proof1}
\G_{p,q}^{m,n}\Big(\begin{matrix} {\bf a} \\
{\bf b} \end{matrix}
\Big\vert s; \tau\Big)&=
C^{D_1} 
\tau^{-N-D_2/(2\tau)}\
\frac{\prod\limits_{j=1}^m G(b_j/\tau-s/\tau;1/\tau)\prod\limits_{j=1}^n G(1+1/\tau-a_j/\tau+s/\tau;1/\tau)}
	{\prod\limits_{j=m+1}^q G(1+1/\tau-b_j/\tau+s/\tau;1/\tau)\prod\limits_{j=n+1}^p G(a_j/\tau-s/\tau;1/\tau)}\\
 &=C^{D_1} \tau^{-N-D_2/(2\tau)} \G_{p,q}^{m,n}\Big(\begin{matrix} \tau^{-1} {\bf a} \\ \nonumber
\tau^{-1} {\bf b} \end{matrix}
\Big\vert s\tau^{-1}; \tau^{-1}\Big),
\end{align}
where we denoted for $r=1,2$,
\begin{align*}
    D_r=\sum_{j=1}^m (b_j-s)^r+\sum_{j=1}^n (1+\tau-a_j+s)^r
-\sum_{j=m+1}^q (1+\tau-b_j+s)^r-\sum_{j=n+1}^p (a_j-s)^r.
\end{align*}
Now our goal is to simplify the expressions for $D_1$ and $D_2$. The first one is easy: it is straightforward to  check that 
\begin{align*}
D_1=s(p-q)+(1+\tau)(n+m-q)-\nu.
\end{align*}
Simplifying an expression for $D_2$ takes more work:
\begin{align*}
D_2&=\sum_{j=1}^m (b_j-s)^2+\sum_{j=1}^n (1+\tau-a_j+s)^2
-\sum_{j=m+1}^q (1+\tau-b_j+s)^2-\sum_{j=n+1}^p (a_j-s)^2\\
&=\sum_{j=1}^n a_j^2-\sum_{j=n+1}^p a_j^2+\sum_{j=1}^m b_j^2-
\sum_{j=m+1}^q b_j^2+Ns^2+(1+\tau)^2(n+m-q) \\
&+2s\bigg(\sum_{j=1}^n (1+\tau-a_j)+ \sum_{j=n+1}^p a_j
-\sum_{j=1}^m b_j - \sum_{j=m+1}^q (1+\tau-b_j)\bigg)\\
&-2(1+\tau) \Big( \sum_{j=1}^n a_j-\sum_{j=m+1}^q b_j\Big)\\
&=\sum_{j=1}^n a_j^2-\sum_{j=n+1}^p a_j^2+\sum_{j=1}^m b_j^2-
\sum_{j=m+1}^q b_j^2+Ns^2+(1+\tau)^2(n+m-q) \\
&+2s\Big((1+\tau)(n+m-q)-\sum_{j=1}^n a_j+ \sum_{j=n+1}^p a_j
-\sum_{j=1}^m b_j + \sum_{j=m+1}^q b_j\Big)\\
&-2(1+\tau) \Big( \sum_{j=1}^n a_j-\sum_{j=m+1}^q b_j\Big)\\
&=\xi+Ns^2+(1+\tau)^2(n+m-q)+2s\big((1+\tau)(n+m-q)-\mu\big)-(1+\tau)(\mu+\nu).
\end{align*}
Thus we have 
\begin{align*}
C^{D_1} 
\tau^{-N-D_2/(2\tau)}&=C^{s(p-q)+(1+\tau)(n+m-q)-\nu} \tau^{-N-\frac{1}{2\tau} \big(\xi+Ns^2+(1+\tau)^2(n+m-q)+2s((1+\tau)(n+m-q)-\mu)-(1+\tau)(\mu+\nu)\big)}\\
&=e^{-s^2 N\frac{\ln(\tau)}{2\tau}} \times \Big(C^{q-p} \tau^{\frac{1}{\tau}((1+\tau)(n+m-q)-\mu)} \Big)^{-s} \\
&\times C^{(1+\tau)(n+m-q)-\nu}
\tau^{-N-\frac{1}{2\tau}\big(\xi+(1+\tau)^2(n+m-q)-(1+\tau)(\mu+\nu)\big)}.
\end{align*}
Recall that $C=(2\pi)^{\frac{\tau-1}{2\tau}} 
\tau^{\frac{1+\tau}{2 \tau }}$, 
thus 
$$
C^{q-p} \tau^{\frac{1}{\tau}((1+\tau)(n+m-q)-\mu)}
=(2\pi)^{\frac{1-\tau}{2\tau}(p-q)} \tau^{\frac{1}{2\tau}(N(1+\tau)-2\mu)},
$$
and 
\begin{align*}
&C^{(1+\tau)(n+m-q)-\nu}
\tau^{-N-\frac{1}{2\tau} \big(\xi+(1+\tau)^2(n+m-q)-(1+\tau)(\mu+\nu)\big)}\\
&=(2\pi)^{\frac{\tau-1}{2\tau}((1+\tau)(n+m-q)-\nu)}
\times \tau^{\frac{1}{2\tau} ((1+\tau)((1+\tau)(n+m-q)-\nu)-\xi-(1+\tau)^2(n+m-q)+(1+\tau)(\mu+\nu)-N}\\
&=(2\pi)^{\frac{\tau-1}{2\tau}((1+\tau)(n+m-q)-\nu)}
\times \tau^{\frac{1}{2\tau} ((1+\tau)\mu-\xi)-N}.
\end{align*}
Combining \eqref{G_transformation_proof1} and the above three formulas we 
 finally obtain the desired result \eqref{eq:G_s_over_tau_transformation}. 
 
 \vspace{1.0cm}


\end{document}